\let\oldtocsection=\tocsection
\let\oldtocsubsection=\tocsubsection
\let\oldtocsubsubsection=\tocsubsubsection
\renewcommand{\tocsection}[2]{\hspace{0em}{\vspace{0.5em}}\oldtocsection{#1}{#2}}
\renewcommand{\tocsubsection}[2]{\hspace{1em}{\vspace{0.5em}}\oldtocsubsection{#1}{#2}}
\renewcommand{\tocsubsubsection}[2]{\hspace{2em}\oldtocsubsubsection{#1}{#2}}
\numberwithin{equation}{section} 
\newcommand{\Z}{\mathbb{Z}}
\newcommand{\C}{\mathbb{C}}
\newtheorem{thm}{Theorem}[section]
\newtheorem{cor}[thm]{Corollary}
\newtheorem{prop}[thm]{Proposition}
\newtheorem{lemma}[thm]{Lemma}
\theoremstyle{definition}
\newtheorem{definition}[thm]{Definition}
\newtheorem{remark}[thm]{Remark}
\theoremstyle{definition}
\theoremstyle{remark}
\theoremstyle{remark}
\def\imod#1{\allowbreak\mkern10mu({\operator@font mod}\,\,#1)}
\begin{document}

\title{Weyl modules for twisted toroidal Lie algebras}
\author{Ritesh Kumar Pandey, Sachin S. Sharma}
\date{\today}

\begin{abstract}In this paper, we extend the notion of Weyl modules for twisted toroidal Lie algebra $\mathcal{T}(\mu)$. We prove that the level one global Weyl modules of $\mathcal{T}(\mu)$ are isomorphic to the tensor product of the level one representation of twisted affine Lie algebras and certain lattice vertex algebras. As a byproduct, we calculate the graded character of the level one local Weyl modules of $\mathcal{T}(\mu)$.
\end{abstract}
\maketitle

{\bf{Notations:}} 
\begin{itemize}
\item The sets of complex numbers, integers, non-negative integers, and positive integers are denoted by $\C$, $\Z$, $\Z_{\geq 0}$, and $\Z_{>0}$, respectively.
\item Let $\C^n\coloneqq \{(x_1, x_2,\dots,x_n) : x_i \in \C, 1 \leq i \leq n\}$, and the sets $\Z^n$, $\Z_{\geq 0}^{n}$, and $\Z_{>0}^{n}$ are defined similarly.
\item For $n\geq 2$, we denote the elements $(m_1,\dots,m_n)\in\Z^n$ and $(m_2,\dots,m_n)\in\Z^{n-1}$ by $\bf{m}$ and $\underline{m}$, respectively.
\item Let $A_n\coloneqq\C[t_1^{\pm1},t_2^{\pm1},\dots,t_n^{\pm1}]$ be the set of Laurent polynomial ring in $n$ variables.
\item For ${\bf{m}}=(m_1,\dots,m_n)\in \Z^n$ and ${\underline{m}}=(m_2,\dots,m_n)\in \Z^{n-1}$ the notation $t^{\bf{m}}$ and $t^{\underline{m}}$ represents $t_1^{m_1}\cdots t_n^{m_n}$ and $t_2^{m_2} \cdots t_n^{m_n}$, respectively, in $A_n$.
\item Let $U(\mathfrak{g})$ denotes the universal enveloping algebra of $\mathfrak{g}$ and $L_n(\mathfrak{g})$ denotes the Lie algebra $\mathfrak{g}\otimes A_n$ with commutator given by $[x\otimes t^{\bf{m}}, y\otimes t^{\bf{n}}] = [x,y]\otimes t^{\bf{m+n}}$.
\end{itemize}

\section{Introduction}An attempt to make inroads in the representation theory of quantum affine algebras in both classical and quantum settings led to the birth of the notion of Weyl modules. It was originally defined for loop algebras $L(\mathfrak{g})$ by Chari and Pressley \cite{VCAP}. The global Weyl modules are indexed by the dominant weights of $\mathfrak{g}$ and are the maximal integrable modules of $L(\mathfrak{g})$. The local Weyl modules are a maximal finite dimensional quotient of global Weyl modules. The local Weyl modules can be identified with graded limits of Drinfeld polynomials of irreducible representation modulo some constraints \cite{DVG}. In the last two decades, the notion of Weyl modules has been extensively studied and their connections with Demazure modules or Kirillov-Reshetikhin modules have been investigated \cite{LLS, VGT, VCTL, RFS, FBLS, FL, FMS, KR, SSS, KN}. For the current Lie algebra $\mathfrak{sl}_{n}[t]$, it was proved that the graded character of the local Weyl module can be given by specializing a Macdonald polynomial at $t= 0$ \cite{BBAC}. This result was a crucial ingredient in the proof of BGG reciprocity for $\mathfrak{sl}_{n}[t]$. Because of these interesting aspects, it is natural to extend this notion to different classes of Lie algebras. The Weyl modules were defined and studied for twisted loop algebras in \cite{VCAP} and for loop Kac-Moody algebras in \cite{RFS}. The notion of Weyl modules was studied for the quotient of toroidal Lie algebra in two variables in \cite{VCTL}, where the quotient excludes a large part of the center. Recently, Kodera removed this constraint and defined Weyl modules for toroidal Lie algebra in two variables \cite{KR}. Further, using the results in \cite{RAOPL}, he obtained a realization of level one global Weyl modules and consequently calculated the graded character of the level one local Weyl modules. In \cite{SSS}, authors generalized the results of Kodera to the toroidal Lie algebras in arbitrary $n$ variables.

On the other hand vertex algebras and their representation theory play an important role in constructing representations of infinite dimensional Lie algebras, especially toroidal Lie algebras \cite{REB, MR1, FEBZ, FILM, K1, KRR, HL1, MRY}. In \cite{MRY}, using vertex operators, a large class of integrable, indecomposable representations were constructed for untwisted toroidal Lie algebras of two variables. These results were generalized for $n$ variables in \cite{MR1}. Using the vertex operator realizations, Billig classified the irreducible modules of the bounded category for full toroidal Lie algebras \cite{YULYB}. The twisted vertex operators are useful in the construction of principal realizations of affine Kac-Moody algebras and moonshine vertex operator algebras \cite{TG, KKLW, LJWR, LW1}. The twisted representation of vertex algebras, which are defined using twisted vertex operators, played a significant role in the study of representations of twisted toroidal Lie algebras and extended affine Lie algebras \cite{BKIRK, YULYBL}. 

Let $\mathfrak{g}$ be a simple finite dimensional Lie algebra of type $A, D$ or $E$. Let $\mu$ be a Dynkin diagram automorphism of $\mathfrak{g}$ of order $r$. Let $\mathcal{T}(\mu)$ be the twisted toroidal Lie algebra (see Subsection \ref{sec5}) and let $\mathcal{T}_{\mathrm{aff}}(\mu)$ be the twisted affine Kac-Moody algebra contained in $\mathcal{T}(\mu)$. For every dominant integral weight $\Lambda$ of $\mathcal{T}_{\mathrm{aff}}(\mu)$, we define the global Weyl module $W_{\mathrm{glob}}^{\mu}(\Lambda)$ of $\mathcal{T}(\mu)$. For every $W_{\mathrm{glob}}^{\mu}(\Lambda)$, one associates a commutative associative algebra $A^{\mu}(\Lambda)$, which defines a right action on $W_{\mathrm{glob}}^{\mu}(\Lambda)$. The local Weyl module $W_{\mathrm{loc}}^{\mu}(\Lambda)$ is defined as the tensor product of $W_{\mathrm{glob}}^{\mu}(\Lambda)$ with one dimensional representation of $A^{\mu}(\Lambda)$. In this paper, for the dominant integral weight of level one, we prove that the corresponding global Weyl module is isomorphic to the tensor product of the basic representation of $\mathcal{T}_{\mathrm{aff}}(\mu)$ and certain lattice vertex algebra of $\mathcal{T}(\mu)$. Building on this realization, we derive the graded character formula for $W_{\mathrm{loc}}^{\mu}(\Lambda)$.
Let $L(\Lambda_0)$ be the basic representation of twisted affine Lie algebra $\mathfrak{g}(\mu)$ and $V_{\Gamma_1}$ be the lattice vertex algebra associated with certain lattice $\Gamma_1$. The following are our main results (see Section \ref{secv} for notation details):

\begin{thm}The space 
$L(\Lambda_0) \otimes V_{\Gamma_1}$ acquires a structure of $\mathcal{T}(\mu)$-module. Furthermore, as a $\mathcal{T}(\mu)$-module, $L(\Lambda_0) \otimes V_{\Gamma_1} \cong W^{\mu}_{\mathrm{glob}}(\Lambda_{0})$.
\end{thm}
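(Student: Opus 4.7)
The plan is in three steps: first, construct a $\CT(\mu)$-module structure on $L(\Lambda_0)\otimes V_{\Gamma_1}$ via (twisted) vertex operators; second, produce a surjection $W^{\mu}_{\mathrm{glob}}(\Lambda_0)\twoheadrightarrow L(\Lambda_0)\otimes V_{\Gamma_1}$ from the universal property of the global Weyl module; and third, promote this surjection to an isomorphism by comparing the right $A^{\mu}(\Lambda_0)$-actions on both sides.

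For Step~1, note that $L(\Lambda_0)$ already carries the action of the twisted affine subalgebra $\CT_{\mathrm{aff}}(\mu)$ (encoding the $t_1$-direction), while $V_{\Gamma_1}$ supplies both a Heisenberg piece associated to $\Gamma_1\otimes_{\Z}\C$ and a family of twisted vertex operators $Y(e^{\alpha},z)$ indexed by $\alpha\in\Gamma_1$. Following the strategy of \cite{MRY,MR1} for untwisted toroidal algebras in two variables, its extension to $n$ variables in \cite{SSS}, and the twisted vertex operator techniques of \cite{BKIRK, YULYBL}, I would let each generator $x\otimes t_1^{m_1}t^{\underline{m}}$ of $\CT(\mu)$ act by combining the $\CT_{\mathrm{aff}}(\mu)$-action on $L(\Lambda_0)$ with the twisted vertex operator on $V_{\Gamma_1}$ whose mode extracts the exponent $\underline{m}$ in the $t_2,\dots,t_n$-directions; the central and degree elements pick up additional derivative and Heisenberg corrections. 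The defining commutation relations of $\CT(\mu)$ then reduce to a twisted Jacobi identity together with standard OPE computations.

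For Steps~2 and~3, let $v_0:=v_{\Lambda_0}\otimes \mathbf{1}$, where $v_{\Lambda_0}$ is the highest-weight vector of $L(\Lambda_0)$ and $\mathbf{1}$ is the vacuum of $V_{\Gamma_1}$. By construction, $v_0$ has weight $\Lambda_0$, is annihilated by the positive part of $\CT(\mu)$, and satisfies the integrability relations $(f_i)^{\Lambda_0(h_i)+1}\cdot v_0=0$, which are precisely the defining relations of $W^{\mu}_{\mathrm{glob}}(\Lambda_0)$. This produces a surjection $\varphi\colon W^{\mu}_{\mathrm{glob}}(\Lambda_0)\twoheadrightarrow L(\Lambda_0)\otimes V_{\Gamma_1}$. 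To promote $\varphi$ to an isomorphism, one identifies the commutative algebra $A^{\mu}(\Lambda_0)$ with the subalgebra of $V_{\Gamma_1}$ generated by the action of the ``horizontal'' $(t_2,\ldots,t_n)$-loop elements on the vacuum; both sides then become free right $A^{\mu}(\Lambda_0)$-modules generated by their respective highest-weight vectors, and a PBW / triangular-decomposition argument in the style of \cite{KR, SSS} forces $\varphi$ to be an isomorphism.

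The principal obstacle is Step~1: verifying that the twisted vertex operators on $V_{\Gamma_1}$ assemble into operators satisfying every relation of $\CT(\mu)$, including those involving imaginary roots and the full extended center. The untwisted case is already delicate; the twisted version additionally requires consistent use of the $\mu$-eigenspace decomposition of $\mathfrak{g}$, careful insertion of $r$-th roots of unity into the operator product expansions, and a twisted form of the Jacobi identity. A secondary subtlety is Step~3: one must pin down $A^{\mu}(\Lambda_0)$ as precisely the correct subalgebra of $V_{\Gamma_1}$ so that the bimodule comparison genuinely forces an isomorphism rather than merely a surjection.
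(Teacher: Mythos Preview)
Your Steps~1 and~2 match the paper's approach closely: the $\mathcal{T}(\mu)$-action on $L(\Lambda_0)\otimes V_{\Gamma_1}$ is built by tensoring the twisted-module realization of $L(\Lambda_0)$ (supplied by \cite{BKIRK}) with the lattice vertex algebra fields on $V_{\Gamma_1}$, and the surjection from $W^{\mu}_{\mathrm{glob}}(\Lambda_0)$ follows by checking the defining relations on the cyclic vector ${\bf v}\otimes 1\otimes 1$. Contrary to your emphasis, Step~1 is not the hard part here: once one invokes Theorem~4.2 of \cite{BKIRK}, only the brackets involving $d_1$ need to be verified by hand.

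The genuine gap is in Step~3. You assert that ``both sides then become free right $A^{\mu}(\Lambda_0)$-modules'' and that a PBW argument then forces the surjection to be bijective. Freeness of $L(\Lambda_0)\otimes V_{\Gamma_1}$ over $A^{\mu}(\Lambda_0)\cong\C[y_2^{\pm1},\dots,y_n^{\pm1}]$ is clear from $V_{\Gamma_1}\cong\mathcal{M}\otimes\C[\tau_2^{\pm1},\dots,\tau_n^{\pm1}]$, but freeness of $W^{\mu}_{\mathrm{glob}}(\Lambda_0)$ is \emph{not} known a priori; it is a consequence of the theorem, not an input, and the references \cite{KR,SSS} you invoke do not prove it directly either. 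What they and the present paper actually do is: (i) establish an \emph{upper bound} $\mathrm{ch}_{q_1}W^{\mu}_{\mathrm{loc}}(\Lambda_0,\underline{a})\le\mathrm{ch}_{q_1}L(\Lambda_0)\prod_{s>0}(1-q_1^s)^{-(n-1)}$ (Proposition~\ref{prop1}); (ii) observe that the right-hand side equals $\mathrm{ch}_{q_1}\mathcal{G}_{\underline{a}}$, so the induced surjection $W^{\mu}_{\mathrm{loc}}(\Lambda_0,\underline{a})\twoheadrightarrow\mathcal{G}_{\underline{a}}$ is an isomorphism for every $\underline{a}$; (iii) use that each weight space of $W^{\mu}_{\mathrm{glob}}(\Lambda_0)$ is finitely generated over $A^{\mu}(\Lambda_0)$ (Proposition~\ref{prop2}) and apply Nakayama's lemma to lift the local isomorphisms to the global one. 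Step~(i) is the technical heart and is entirely missing from your outline: it is an intricate induction (Lemma~\ref{prom 5}, Lemma~\ref{lemma2}) that uses the automorphism $\Psi_0\Psi_{\theta_s}$ of $\mathcal{T}(\mu)$ (Lemma~\ref{RP1}) to show that $W^{\mu}_{\mathrm{loc}}(\Lambda_0,\underline{0})^+$ is already spanned by $\hat{\mathcal{Z}}\,U(\mathcal{T}_{\mathrm{aff}}(\mu)^-)v_0$. This is also exactly the step that fails in type $A_{2\ell}$, which is why that case is excluded. Without this bound, your surjection could have a nonzero kernel, and no PBW bookkeeping on the target alone rules that out.
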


\begin{thm}
Let $\mathcal{T}(\mu)$ be a twisted toroidal Lie algebra whose underlying finite dimensional simple Lie algebra is of type $A_{2 \ell +1}$, or $D_{\ell}$. Let $\mathcal{T}_{\mathrm{aff}}(\mu)$ be twisted affine Kac-Moody subalgebra of $\mathcal{T}(\mu)$ and let $L(\Lambda_0)$ be its basic representation. Then we have
$$\mathrm{ch}_{q_1} W^{\mu}_{\mathrm{loc}}(\Lambda_0, \underline{a}) =
\mathrm{ch}_{q_1} L(\Lambda_0)\left( \prod_{s>0}{\frac{1}{1-q_1^{s}}}\right)^{n-1},$$
$$\mathrm{ch}_{q_1, q_2, \ldots, q_n}
W^{\mu}_{\mathrm{loc}}{(\Lambda_0, \underline{a})} =
\mathrm{ch}_{q_1} L(\Lambda_0) \left( \prod_{s>0, i = 2}^{n}{\frac{1}{1-q_1^{s} q_i}}\right).$$
\end{thm}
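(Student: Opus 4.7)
The plan is to derive the character formulas by transferring the computation from $W^{\mu}_{\mathrm{loc}}(\Lambda_0,\underline{a})$ to the explicit tensor product $L(\Lambda_0)\otimes V_{\Gamma_1}$ supplied by Theorem~1, and then matching the resulting expression against the known character data for the basic representation and for the lattice vertex algebra. By definition, $W^{\mu}_{\mathrm{loc}}(\Lambda_0,\underline{a}) = W^{\mu}_{\mathrm{glob}}(\Lambda_0)\otimes_{A^{\mu}(\Lambda_0)}\C_{\underline{a}}$, where $\C_{\underline{a}}$ is the one-dimensional module for the commutative algebra $A^{\mu}(\Lambda_0)$ corresponding to a point $\underline{a}$ in its spectrum.

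First, I would invoke Theorem~1 to rewrite the left-hand side as $L(\Lambda_0)\otimes\bigl(V_{\Gamma_1}\otimes_{A^{\mu}(\Lambda_0)}\C_{\underline{a}}\bigr)$; here the right action of $A^{\mu}(\Lambda_0)$ factors entirely through $V_{\Gamma_1}$, since $L(\Lambda_0)$ is an irreducible $\mathcal{T}_{\mathrm{aff}}(\mu)$-module and the right multiplication commutes with the action of the diagonal Cartan-type subalgebra generating $A^{\mu}(\Lambda_0)$. Under this identification the grading splits: the first tensor factor contributes $\mathrm{ch}_{q_1}L(\Lambda_0)$ (no $q_i$ for $i\geq 2$ appears because the basic representation sits in a single $t_2,\dots,t_n$-degree), and the only remaining task is to compute the graded character of the quotient $V_{\Gamma_1}\otimes_{A^{\mu}(\Lambda_0)}\C_{\underline{a}}$.

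Next, I would use the explicit structure of $V_{\Gamma_1}$ as the tensor product of a rank $(n-1)$ Heisenberg Fock space with the group algebra of the lattice $\Gamma_1$. The commutative algebra $A^{\mu}(\Lambda_0)$ is generated by the degree-zero Heisenberg modes together with the lattice translation operators, so specialization at $\underline{a}$ kills the $\Gamma_1$-lattice part (picking out a single coset) and forces the zero-mode polynomial generators to act by scalars. What survives is the free Fock space generated by the positive Heisenberg modes $a_i(-s)$ for $i=2,\dots,n$ and $s>0$. For the one-variable character one counts these modes by their $t_1$-degrees (giving a factor $(1-q_1^s)^{-1}$ for each $s>0$, and the product over $i=2,\dots,n$ produces the exponent $n-1$); for the multi-variable character one keeps the additional $t_i$-degree attached to $a_i(-s)$, yielding $\prod_{s>0,\,i=2}^{n}(1-q_1^s q_i)^{-1}$.

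The main obstacle I expect is the second step: verifying that the right $A^{\mu}(\Lambda_0)$-action on $V_{\Gamma_1}$ coming from Theorem~1 agrees with the natural Heisenberg/lattice action, so that the specialization really does collapse $V_{\Gamma_1}$ to a polynomial algebra in the generators $a_i(-s)$. This requires checking on generators that the commuting operators defining the right action are precisely the zero modes and lattice translations, and that the restriction to type $A_{2\ell+1}$ or $D_\ell$ is what ensures the lattice $\Gamma_1$ is non-degenerate enough for the specialization to produce a one-dimensional line in the group algebra factor. Once this identification is in place, the character computation is a routine Heisenberg Fock space count, and combining with the $L(\Lambda_0)$ factor gives both formulas in the theorem.
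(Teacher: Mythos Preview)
Your overall strategy---pass to $\mathcal{G}=L(\Lambda_0)\otimes V_{\Gamma_1}$ via Theorem~1, identify the right $A^{\mu}(\Lambda_0)$-action with multiplication on the group-algebra factor $\mathbb{C}[\Gamma_1]$, and then read off the character of the Fock space that remains after specialization---is correct and is exactly what the paper packages in Corollary~\ref{1cr} and the one-line deduction preceding the theorem. The paper's own argument, however, runs in the \emph{opposite} logical order: it first proves the character formula by a sandwich argument (the upper bound of Proposition~\ref{prop1}, obtained from Garland identities and Lemmas~\ref{prom 5}--\ref{lemma2}, together with the lower bound coming from the surjection $W^{\mu}_{\mathrm{glob}}(\Lambda_0)\twoheadrightarrow\mathcal{G}$), and only afterwards upgrades to the global isomorphism of Theorem~\ref{gthm} via Nakayama. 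So invoking Theorem~1 is legitimate as the statements are ordered in the introduction, but you should be aware that within the paper the global isomorphism is a \emph{consequence} of the character equality, not the other way around; your route is cleaner precisely because it hides the hard work inside Theorem~1.

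Two points in your sketch need correction. First, the right $A^{\mu}(\Lambda_0)$-action on $\mathcal{G}$ is simply multiplication by $\tau^{\underline{m}}$ on the lattice factor $\mathbb{C}[\Gamma_1]\cong\mathbb{C}[\tau_2^{\pm1},\dots,\tau_n^{\pm1}]$; there are no ``degree-zero Heisenberg modes'' involved, since the bilinear form on $\Gamma_1$ is identically zero and the Heisenberg algebra $\overline{H}_1$ is abelian. The relevant check is the one-line computation $h_{0,\underline{m}}(\mathbf{v}\otimes 1\otimes 1)=\mathbf{v}\otimes 1\otimes\tau^{\underline{m}}$ in Section~6.7. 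Second, the restriction to types $A_{2\ell+1}$ and $D_{\ell}$ has nothing to do with non-degeneracy of $\Gamma_1$ (which is always totally isotropic); it is needed because Lemma~\ref{RP1} fails for $A_{2\ell}$ and the MRY presentation underlying the definition of the global Weyl module is unavailable for $E_6$ (see Remark~1.3). These enter the upper-bound side of the argument and hence, implicitly, the proof of Theorem~1 that you are quoting.
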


\begin{remark}
We would like to emphasize that our results do not include the twisted toroidal Lie algebras whose underlying finite dimensional simple Lie algebras are of types $A_{2 \ell}$ and $E_{6}$. For the $A_{2 \ell}$ case, the Lemma \ref{RP1}, which plays a crucial role in this paper, does not hold, and for
$E_{6}$, we do not have realization of corresponding twisted toroidal Lie algebra in terms of MRY presentation, which plays a key role in the definition of the global Weyl modules.
\end{remark}
\subsection{{\bf{Organization of the paper}:}} We begin with the basics of twisted affine Lie algebras, toroidal Lie algebras, and twisted toroidal Lie algebras in Section \ref{secp}. In Section \ref{secpr}, we use the results of \cite{JMMishra} to obtain the realization of $\mathcal{T}{(\mu)}$ in terms of generators and relations. Using the realization, we first define global Weyl modules for $\mathcal{T(\mu)}$ in Section \ref{secgl}. Using Garland identities, we prove that for a dominant integral weight $\Lambda$, the commutative associative algebra $A^{\mu}(\Lambda)$ is associated with $W_{\mathrm{glob}}^{\mu}(\Lambda)$ is isomorphic to the ring of invariants of the subgroup of the permutation group (Subsection \ref{thm1}) and prove the result related to the weight spaces of global and local Weyl modules (Proposition \ref{prop2}). Section \ref{secl1} is devoted to the dominant integral weight of level one, where we obtain an upper bound for the character of the level one local Weyl module (Proposition \ref{prop1}). We start Section \ref{secv} with the basics of vertex algebras. Using result of \cite{BKIRK}, we obtain the realization of level one global Weyl module in terms of tensor product of modules over twisted affine Kac-Moody algebra and lattice vertex algebra (Theorem \ref{gthm}). Finally, as a corollary, we derive the character formula of the level one local Weyl module.

\section{Preliminaries} {\label{secp}} 
Let $\mathfrak{g}$ be a finite dimensional simple Lie algebra of type $A_{2\ell}\,\,(\ell\geq 1),$ $A_{2\ell-1}\,\,(\ell\geq 2),$ $D_{\ell+1}\,\,(\ell\geq 3)$  over $\C$ and $\mathfrak{h}$ be its Cartan
subalgebra. Let $I$ denote the index set for the simple roots of $\mathfrak{g}$. We fix a set of simple roots ${\alpha_i'}$, simple coroots ${{\alpha_i'}}^\vee$, and fundamental weights $\Lambda_i'$ for $i\in I$. We denote by $R$ the set of roots, $R(+)$ (respectively  $R(-)$) the set of positive (respectively negative) roots of $\mathfrak{g}$ with respect to $\mathfrak{h}$. Let $(\cdot|\cdot)$ be a nondegenerate invariant symmetric bilinear form on $\mathfrak{g}$ defined by $(x|y)=\text{tr}(xy),\,\, \text{tr}(xy),\,\, \text{and}\,\, \frac{1}{2}\text{tr}(xy)$ for $\mathfrak{g}=$ $A_{2\ell},$ $A_{2\ell-1},$ $\text{and}\,\, D_{\ell+1},$ respectively.  For any ${\alpha'} \in R^+$, let $\mathfrak{g}_{\pm {\alpha'}}$ be the corresponding root spaces and fix non-zero elements $e'_{\pm {\alpha'}}\in \mathfrak{g}_{\pm {\alpha'}}, h'_{\alpha'} \in \mathfrak{h}$ such that $[e'_{\alpha'},e'_{-{\alpha'}}]=h'_{\alpha'}, [h'_{\alpha'},e'_{\pm {\alpha'}}]=\pm 2e'_{\pm {\alpha'}}$. Then the subalgebra generated by $\{e'_{\alpha'},f'_{\alpha'},h'_{\alpha'}\}$ is isomorphic to $\mathfrak{sl}_2$. Set $e'_{{\alpha_i'}}=e'_i, e'_{-{\alpha_i'}}=f'_i$, and $h'_{{\alpha_i'}}=h'_i$. The set of elements $e'_i$, $f'_i$, and $h'_i$, $i\in I$ generates the Lie algebra $\mathfrak{g}$ known as the Chevalley generators. We have the standard triangular decomposition of $\mathfrak{g}$, $\mathfrak{g}= n^-\oplus \mathfrak{h} \oplus n^+$, where $n^-(\text{respectively }\,\, n^+)$ is the subalgebra of $\mathfrak{g}$ generated by $f'_i$ (respectively  $e'_i$), $i\in I$.

Let $\mu$ be a diagram automorphism of $\mathfrak{g}$ of order $r \text{ (which is }\text{$2$ or $3$})$ and let $\xi$ be an $r^{th}$ root of unity (say $\xi=$exp$(2\pi i/r)$). Since $\mu$ is diagonalizable, $\mathfrak{g}$ has the following $\Z/r\Z$ grading:
$$
\mathfrak{g}=\bigoplus_{s={0}}^{ {r-1}}\mathfrak{g}_s,
$$
where $\mathfrak{g}_s=\{x\in\mathfrak{g}: \mu(x)=\xi^sx\}$. It is well known that the subalgebra $\mathfrak{g}_0$ is a simple Lie algebra and $\mathfrak{g}_j$, for $j= {1},\dots, {r-1}$, are irreducible $\mathfrak{g}_0$-modules. The following table describes the various possibilities for $\mathfrak{g}$ and $\mathfrak{g}_0$:

 \begin{center}$
    \begin{tabular} { | c | c | c | c | }
    \hline
     $r$ & $\mathfrak{g}$ &$\mathfrak{g}_0$ \\
    \hline
     $2$ & $A_{2\ell}, \ell\geq 1$ & $B_\ell$ \\
     $2$ & $A_{2\ell-1}, \ell\geq 2$ & $C_\ell$\\
    $2$ & $D_{\ell+1},\ell\geq 3$ & $B_\ell$\\
    $3$ & $D_4$& $G_2$\\
    \hline
    \end{tabular}$
    \end{center}

     Let $I_0=\{1,2,\dots,\,$rank$\{\mathfrak{g}_0\}\}$. The Chevalley generators $\{e^{(0)}_i,f^{(0)}_i,h^{(0)}_i| i\in I_0\}$ of $\mathfrak{g}_0$ are given by:
     $$ e^{(0)}_i= e'_i,\, f^{(0)}_i=f'_i,\, h^{(0)}_i= h'_i, \, \text{if} \,\mu({\alpha_i'})= {\alpha_i'};$$
     $$e^{(0)}_i=\sum_{j=1}^{r-1}\mu^j(e'_{i}),\, f^{(0)}_i=\sum_{j=1}^{r-1}\mu^j(f'_{i}),\, h^{(0)}_i=\sum_{j=1}^{r-1}\mu^j(h'_{i}), \, \text{if} \,\mu({\alpha_i'})\neq {\alpha_i'}, \, \text{for} \, \mathfrak{g}=A_{2\ell},\, i\neq \ell;$$
     $$e^{(0)}_\ell=\sqrt{2}(e'_\ell+e'_{\ell+1}),\, f^{(0)}_\ell=\sqrt{2}(f'_\ell+f'_{\ell+1}),\, h^{(0)}_\ell=2(e'_\ell+e'_{\ell+1})\, \text{for} \, \mathfrak{g}= A_{2\ell}.$$
     We have the standard triangular decomposition of $\mathfrak{g}_0$, $\mathfrak{g}_0= n_0^-\oplus \mathfrak{h}_0 \oplus n_0^+$. Where $n_0^-\,(\text{respectively }\,\,n_0^+)$ is the subalgebra of $\mathfrak{g}$ generated by $f^{(0)}_i$ (respectively  $e^{(0)}_i$), $i\in I_0$, and $\mathfrak{h}_0$ is the Cartan subalgebra of $\mathfrak{g}_0$ spanned by  $h^{(0)}_i,\, i\in I_0$.
     The simple roots $\alpha_i\in\mathfrak{h}_0^*,\, i\in I_0$ of $\mathfrak{g}_0$ are given by:
     $${\alpha}_i = \frac{1}{r}\sum_{j=1}^{r-1}\mu^j({\alpha_i'}).
     $$ Let $\alpha_i^\vee\in\mathfrak{h}_0,\, i\in I_0$ be the simple coroots of $\mathfrak{g}_0$. Note that the restriction of the nondegenerate invariant symmetric bilinear form $(\cdot|\cdot)$ on both $\mathfrak{g}_0$ and $\mathfrak{h}_0$  remains nondegenerate.                            
     
     Let $A=(a_{i,j})_{i,j\in I_0}$ be the Cartan matrix of $\mathfrak{g}_0$. We introduce the element $\theta_0\in \mathfrak{h}$ as follows:
     $$\theta_0=
       \begin{cases}
    {\alpha_1'}+\cdots+{\alpha_{2\ell}'}, & \text{for $\mathfrak{g}=A_{2\ell}$}.\\[.3em]
    {\alpha_1'}+\cdots+{\alpha_{2\ell-2}'}, & \text{for $\mathfrak{g}=A_{2\ell-1}$}.\\[.3em]
    {\alpha_1'}+\cdots+{\alpha_{\ell}'}, & \text{for $\mathfrak{g}=D_{\ell+1}$}.\\[.3em]
    {\alpha_1'}+{\alpha_2'}+{\alpha_3'}, & \text{for $\mathfrak{g}=D_{4}$}.
  \end{cases}$$

  Let the $\mathfrak{sl}_2$-triplet associated with $\theta_0$ be denoted by $e'_{\theta_0}, f'_{\theta_0}, h'_{\theta_0}$ with the bracket $[h'_{\theta_0},e'_{\theta_0}]=2e'_{\theta_0}$,  $[h'_{\theta_0},f'_{\theta_0}]=-2f'_{\theta_0}$ and $[e'_{\theta_0},f'_{\theta_0}]=h'_{\theta_0}.$

 Let $$\theta^0=\begin{cases}
 \theta_0, & \text{for $\mathfrak{g}=A_{2\ell}$}
 \\[.3em] \frac{1}{r}\sum_{j=1}^{r-1}\mu^j(\theta_0), &\text{for $\mathfrak{g}=A_{2\ell-1}, D_{\ell+1},D_{4}.$}
\end{cases}$$
 \begin{remark}{} For any $0\leq j\leq r-1$, we denote an element $x\in \mathfrak{g}_j$ by $x^{(j)}$.  Note that for any $k\in \Z$, $x^{(k)}=x^{(j)}$ where $k=rs+j$ for some $s\in \Z$ and $0\leq j\leq r-1$.
\end{remark}

 Let $\theta_s$ be the highest short root of $\mathfrak{g}_0$; in fact, $\theta_s=\frac{1}{2}\theta^0\, \text{for} \, \mathfrak{g}=A_{2\ell}$, $\theta_s=\theta^0\,\text{for} \, \mathfrak{g}=A_{2\ell-1},\, D_{\ell+1},\, D_4.$ For $\mathfrak{g}=A_{2\ell-1}, D_{\ell+1},\,\text{and}\, D_4$, let
$$e^{(j)}_{\theta_s}=:\sum_{i=1}^{r-1}\mu^i(\xi^{(r-i)j}e'_{\theta_0})\in {(\mathfrak{g}_j)}_{\theta_s},\,\,f^{(j)}_{\theta_s}=:\sum_{i=1}^{r-1}\mu^i(\xi^{(r-i)j}f'_{\theta_0})\in {(\mathfrak{g}_j)}_{-\theta_s},$$
and
$$h^{(j)}_{\theta_s}=:\sum_{i=1}^{r-1}\mu^i(\xi^{(r-i)j}h'_{\theta_0})\in {(\mathfrak{g}_j)}_{0},$$
where $0\leq j\leq r-1$. By direct calculation, we have the following bracket operations for $e^{(j)}_{\theta_s},f^{(j)}_{\theta_s},$ and $h^{(j)}_{\theta_s}$:
$$
\begin{aligned}
[e^{(i)}_{\theta_s},f^{(j)}_{\theta_s}]=
 h^{(i+j)}_{\theta_s},
[h^{(i)}_{\theta_s},e^{(j)}_{\theta_s}]=
2e^{(i+j)}_{\theta_s} ,
 [h^{(i)}_{\theta_s},f^{(j)}_{\theta_s}]=
-2f^{(i+j)}_{\theta_s},
\end{aligned}
$$
and
$$[e^{(i)}_{\theta_s},e^{(j)}_{\theta_s}]=[f^{(i)}_{\theta_s},f^{(j)}_{\theta_s}]=[h^{(i)}_{\theta_s},h^{(j)}_{\theta_s}]=0,\,\,\text{for all}\,\,1\leq i,j\leq r-1.$$
By direct calculation and using [\cite{KAC}, Lemma 8.1. a)] we have $$
\begin{aligned}
(e^{(r-i)}_{\theta_s}|f_{\theta_s}^{(j)})=&\begin{cases}
0,& \text{if}\,\, 0\leq i\neq j\leq r-1,\\[.3em]
r,& \text{if}\,\, 0\leq i= j\leq r-1.
\end{cases}
\end{aligned}
$$
      
Note that $f^{(1)}_{\theta_s}$ (respectively  $e^{(r-1)}_{\theta_s}$) is the highest (respectively  lowest) weight vector of the $\mathfrak{g}_0$-module $\mathfrak{g}_1$ (respectively  $\mathfrak{g}_{r-1}$) with weight $\theta^0$ (respectively  $-\theta^0$) as like in [\cite{KAC}, Proposition 8.3 e)].

\subsection{Untwisted affine Lie algebras} Let $\mathfrak{g}$ be a finite dimensional simple Lie algebra. Consider the loop algebra $L(\mathfrak{g})=\mathfrak{g}\otimes\C[t^{\pm1}]$. This is an infinite dimensional Lie algebra with the bracket defined by:

$$[x\otimes t^m,y\otimes t^n]=[x,y]\otimes t^{m+n},$$
where $x,y\in\mathfrak{g}$ and $m,n\in\Z.$ Let $\tilde{L}(\mathfrak{g})$ denote the one-dimensional central extension of loop algebra $L(\mathfrak{g})$ given by $\tilde{L}(\mathfrak{g})=\mathfrak{g}\otimes\C[t^{\pm1}]\oplus\C K$. Further, by adjoining a derivation $d$, we extend $\tilde{L}(\mathfrak{g})$ as an algebra $\hat{\mathfrak{g}}=\mathfrak{g}\otimes\C[t^{\pm1}]\oplus\C K\oplus\C d$. Then the algebra $\hat{\mathfrak{g}}$ is called an untwisted affine Lie algebra with the following Lie bracket:
$$[x\otimes t^m,y\otimes t^n]=[x,y]\otimes t^{m+n}+m\delta_{m+n,0}(x|y)K,$$
$$[\hat{\mathfrak{g}},K]=0,\,\,[d,x\otimes t^m]=mx\otimes t^m,$$
where $x,y\in\mathfrak{g}$ and $m,n\in\Z.$ The normlized invariant bilinear form on $\hat{\mathfrak{g}}$ is given by:
$$(x\otimes P_1|y\otimes P_2)=\mathrm{Res}(t^{-1}P_1P_2)(x|y),\,\,(\C K+\C d|x\otimes P_1)=0,$$
$$(K|K)=(d|d)=0,\,\,(K|d)=1,$$
where $x,y\in\mathfrak{g},$ $P_1,P_2\in\C[t^{\pm 1}]$ and $\mathrm{Res}(t^{-1}P_1P_2)$ is the residue of a Laurent polynomial $t^{-1}P_1P_2$. The bilinear form on $\hat{\mathfrak{g}}$ is nondegenerate and symmetric.

\subsection{Twisted affine Lie algebras} \label{sec3} Consider the untwisted affine Lie algebra
$$\hat{\mathfrak{g}}=\mathfrak{g}\otimes\C[t^{\pm 1}]\oplus \C K\oplus \C d.$$
Let extend the diagram automorphism $\mu$ to $\hat{\mathfrak{g}}$ by 
$$\tilde{\mu}(x\otimes t^j)=\xi^{-j}\mu(x)\otimes t^j,\,\,\tilde{\mu}(K)=K,\,\,\tilde{\mu}(d)=d.$$
The twisted affine Lie algebra $\hat{\mathfrak{g}}({\mu})$ is defined as the set of fixed points of $\tilde{\mu}$ on $\hat{\mathfrak{g}}$, which is a subalgebra of $\hat{\mathfrak{g}}$. In fact,
$$\hat{\mathfrak{g}}(\mu)=\bigoplus _{{j}={0}}^{{r-1}}\mathfrak{g}_j\otimes t^j\C[t^{\pm r}] \oplus \C K\oplus\C d.$$
The normlized invariant bilinear form on $\hat{\mathfrak{g}}(\mu)$ is given by:
$$(x\otimes P_1|y\otimes P_2)=\frac{1}{r}\mathrm{Res}(t^{-1}P_1P_2)(x|y),\,\,(\C K+\C d|x\otimes P_1)=0,$$
$$(K|K)=(d|d)=0,\,\,(K|d)=1,$$
where $x\in\mathfrak{g}_i,y\in\mathfrak{g}_j,$ $P_1\in t^i\C[t^{\pm r}]$, $P_2\in t^j\C[t^{\pm r}]$ for $0\leq i,j\leq r-1$, and $\mathrm{Res}(t^{-1}P_1P_2)$ is as defined above. {The bilinear form on $\hat{\mathfrak{g}}(\mu)$ is nondegenerate and symmetric}. The subalgebra $\hat{\mathfrak{g}}'(\mu)$ of $\hat{\mathfrak{g}}(\mu)$ is given by
$$\hat{\mathfrak{g}}'(\mu)=\bigoplus _{{j}={0}}^{{r-1}}\mathfrak{g}_j\otimes t^j\C[t^{\pm r}] \oplus \C K.$$

\subsection{Toroidal Lie algebras}
The universal central extension of $L_n(\mathfrak{g})$ can be expressed as $L_n(\mathfrak{g})\oplus \mathcal{Z}$, where $\mathcal{Z}$ represents the space of Kähler differentials. The vectors in $\mathcal{Z}$ are spanned by $\{t^{\bf{s} }K_i:\sum_{i=1}^{n}s_it^{\bf{s}}K_i=0, {\bf{s}}\in \Z^n, 1\leq i\leq n\}$. Further, we extend $L_n(\mathfrak{g})\oplus \mathcal{Z}$ as an algebra
$\mathcal{T}=$ $L_n(\mathfrak{g})\oplus \mathcal{Z}\oplus \mathfrak{D}$ where $\mathfrak{D}$ is the space spanned by the derivations $d_1,\dots,d_n$. Then the algebra $\mathcal{T}$ is called the toroidal Lie algebra with the following Lie bracket:
$$
[x\otimes t^{\bf{m}},y\otimes t^{\bf{n}}]=[x,y]\otimes t^{{\bf{m}}+{\bf{n}}}+(x|y)\sum_{i=1}^{n}m_i t^{{\bf{m}}+{\bf{n}}}K_i,$$
$$[L_n(\mathfrak{g}),\mathcal{Z}]=0=[\mathcal{Z},\mathcal{Z}]=[\mathfrak{D},\mathfrak{D}],
$$
$$[d_i,x\otimes t^{\bf{m}}]=m_ix\otimes t^{\bf{m}}, [d_i,t^{\bf{m}}K_j]=m_it^{\bf{m}}K_j,
$$
where $x,y\in \mathfrak{g}$, ${\bf{m}},{\bf{n}}\in \Z^n$, and $1\leq i,j\leq n$.

\subsection{Twisted toroidal Lie algebras} \label{sec5}
Define an endomorphism $\Tilde{\mu}$ of $L_n(\mathfrak{g})$ using the diagram automorphism $\mu$ of $\mathfrak{g}$ by:

$$
\Tilde{\mu}(x\otimes t^{\bf{s}})=\xi^{-s_1}\mu(x)\otimes t^{\bf{s}}.
$$One can verify that $\Tilde{\mu}$ is an automorphism of $L_n(\mathfrak{g})$. Let $L_n(\mathfrak{g},\mu)$ be the set of fixed points of $\Tilde{\mu}$. Note that $L_n(\mathfrak{g},\mu)$ is a subalgebra of $L_n(\mathfrak{g})$. Moreover, $L_n(\mathfrak{g},\mu)$ has the following $\Z/r\Z$ grading:
$$
L_n(\mathfrak{g},\mu)=\bigoplus _{{j}={0}}^{{r-1}}L_n(\mathfrak{g},\mu)_j,
$$
where $L_n(\mathfrak{g},\mu)_j=\mathfrak{g}_j\otimes t_1^j\C[t_1^{\pm r}, t_2^{\pm1},\dots,t_n^{\pm1}]$.

The universal central extension of $L_n(\mathfrak{g},\mu)$ can be expressed as $L_n(\mathfrak{g},\mu)\oplus \mathcal{Z'}$, where $\mathcal{Z'}$ is the subspace of $\mathcal{Z}$ and the vectors in $\mathcal{Z'}$ are spanned by $\{t^{\bf{s} }K_i\in \mathcal{Z}:{\bf{s}}\in \Z^n, 1\leq i
\leq n, s_1\cong 0$ (mod $r) \}$. Further, we extend $L_n(\mathfrak{g},\mu )\oplus \mathcal{Z'}$ as an algebra
$\mathcal{T}(\mu)=$ $L_n(\mathfrak{g},\mu)\oplus \mathcal{Z'}\oplus \mathfrak{D}$ where $\mathfrak{D}$ is defined as above. Note that $\mathcal{T}(\mu)$ is a subalgebra of $\mathcal{T}$. $\mathcal{T}(\mu)$ is called the twisted toroidal Lie algebra. In fact

$$\mathcal{T}(\mu)= \bigoplus _{{j}={0}}^{{r-1}}\mathfrak{g}_j\otimes t_1^j\C[t_1^{\pm r}, t_2^{\pm1},\dots,t_n^{\pm1}]\oplus \underset{m_1\in \Z, \underline {m}\in \Z^{n-1}} {\sum_{i=1}^{n}} t_1^{rm_1}t^{{\underline {m}}}K_i \oplus \mathfrak{D}.$$
The following is the standard triangular decomposition of $\mathcal{T}(\mu)$:
$$
    \mathcal{T}(\mu)= \mathcal{T}(\mu)^-\oplus \mathcal{T}(\mu)^0\oplus \mathcal{T}(\mu)^+,$$
where
$$\begin{aligned}
\mathcal{T}(\mu)^+&=\mathfrak{n}_0^+\otimes \C[t_2^{\pm1},\dots,t_n^{\pm1}]\oplus \mathfrak{g}_0\otimes t_1^{r}\C[t_1^{r},t_2^{\pm1},\dots,t_n^{\pm1}] \\&\oplus \bigoplus _{{j}={1}}^{{r-1}}\mathfrak{g}_j\otimes t_1^{j}\C[t_1^{r},t_2^{\pm1},\dots,t_n^{\pm1}]
\oplus {\sum_{i=1}^{n}} t_1^{r}\C[t_1^{r},t_2^{\pm1},\dots,t_n^{\pm1}]K_i,\\
\mathcal{T}(\mu)^-&=\mathfrak{n}_0^-\otimes \C[t_2^{\pm1},\dots,t_n^{\pm1}]\oplus \mathfrak{g}_0\otimes t_1^{-r}\C[t_1^{-r},t_2^{\pm1},\dots,t_n^{\pm1}] \\&\oplus \bigoplus _{{j}={1}}^{{r-1}}\mathfrak{g}_j\otimes t_1^{j-r}\C[t_1^{-r},t_2^{\pm1},\dots,t_n^{\pm1}]
\oplus {\sum_{i=1}^{n}} t_1^{-r}\C[t_1^{-r},t_2^{\pm1},\dots,t_n^{\pm1}]K_i,\\
\mathcal{T}(\mu)^0&= \mathfrak{h}_0\otimes\C[t_2^{\pm1},\dots,t_n^{\pm1}]\oplus {\sum_{i=1}^{n}}\C[t_2^{\pm1},\dots,t_n^{\pm1}]K_i\oplus \mathfrak{D}.
\end{aligned}$$
Let $\mathfrak{h}_{\mathcal{T}(\mu)}=\mathfrak{h}_0\oplus{\sum_{i=1}^{n}}\C K_i\oplus{\sum_{i=1}^{n}}\C d_i$. Then $\mathfrak{h}_{\mathcal{T}(\mu)}$ is a Cartan subalgebra of $\mathcal{T}(\mu)$. For $i\in I_0$, we extend $\alpha_i\in\mathfrak{h}_0^*$ to an element of $\mathfrak{h}_{\mathcal{T}(\mu)}^*$ by setting $\alpha_i(K_j)=\alpha_i(d_j)=0$ for $1\leq j\leq n$. For $1\leq i\leq n$, define $\delta_i\in \mathfrak{h}_{\mathcal{T}(\mu)}^*$ and $\gamma_i\in \mathfrak{h}_{\mathcal{T}(\mu)}^*$ by
$$
\begin{aligned}
\delta_i(\alpha_j^\vee)=0,&\,\,\,\,\,\gamma_i(\alpha_j^\vee)=0,\,\, \text{for all $j\in I_0,$}\\
\delta_i(K_j)=0,&\,\,\,\,\gamma_i(K_j)=\delta_{ij},\,\, \text{for all $1\leq j\leq n,$}\\
\delta_i(K_j)=\delta_{ij},&\,\,\,\,\gamma_i(K_j)=0,\,\, \text{for all $1\leq j\leq n.$}
\end{aligned}$$ Then it is easy to check that $\alpha_i$, $\delta_j$, $\gamma_j$ for $i\in I_0$, $1\leq j\leq n$ form a $\C$- basis of $\mathfrak{h}_{\mathcal{T}(\mu)}^*$. The nondegenerate invariant symmetric bilinear form $(\cdot|\cdot)$ on $\mathfrak{h}_0$ induces a nondegenerate form  $<\cdot|\cdot>$ on $\mathfrak{h}_{\mathcal{T}(\mu)}^*$ defined as
$$<\alpha_i|\alpha_j> \,\,=(\alpha_i|\alpha_j)\,\,\text{for all $i,j\in I_0$},$$ 
$$<\alpha_i|\delta_j>\,\,=\,\,<\alpha_i|\gamma_j>\,\,=0\,\,\text{for all $i\in I_0, 1\leq j\leq n,$}
$$
$$<\delta_i|\gamma_j>\,\,=\,\,\delta_{ij},\,\, \text{and}\,\,<\delta_i|\delta_j>\,\,=\,\,<\gamma_i|\gamma_j>\,\,=0\,\, \text{for all $ 1\leq i, j\leq n$}.$$
For $\bf{m}\in\Z^n$, define $\delta_{\bf{m}}\coloneqq{\sum_{i=1}^{n}}m_i\delta_i\in\mathfrak{h}_{\mathcal{T}(\mu)}^*$. With respect to $\mathfrak{h}_0$, let $R_0$, $R_0(+)$, $R_0(-)$, and $R_0^s$ denote the set of all roots, positive roots, negative roots, and short roots of $\mathfrak{g}_0$, respectively. With respect to $\mathfrak{h}_{\mathcal{T}(\mu)}$, let $R^{\mu}$, $R^{\mu}(+)$, and $R^{\mu}(-)$ denote the set of all roots, positive roots, and negative roots of $\mathcal{T}(\mu)$, respectively. Then for $\mathfrak{g}=A_{2\ell-1},\, D_{\ell+1},\, D_4$, we have
$$\begin{aligned}
R^{\mu}&=\{\alpha+\delta_{\bf{m}}\,\,:\,\, \alpha\in R_0,\,\, {\bf{m}}\in\Z^n,\,\, m_1\cong 0\,(\text{mod r})\}\\&\cup\{\alpha+\delta_{\bf{m}}\,\, :\,\, \alpha\in R_0^s,\,\, {\bf{m}}\in\Z^n,\,\, m_1\ncong 0\,\,(\text{mod r})\}\\
&\cup\{\delta_{\bf{m}}\,\,:\,\,{\bf{m}}\in\Z^n,\,\, {\bf{m}}\neq {\bf{0}}\},
\end{aligned}$$
$$\begin{aligned}
R^{\mu}(+)&=\{\alpha+\delta_{\bf{m}}\,\,:\,\, \alpha\in R_0(+),\,\, {\bf{m}}\in\Z^n,\,\, m_1=0\}\\&\cup\{\alpha+\delta_{\bf{m}}\,\,:\,\, \alpha\in R_0,\,\, {\bf{m}}\in\Z^n,\,\,m_1>0,\,\, m_1\cong 0\,(\text{mod r})\}\\&\cup\{\alpha+\delta_{\bf{m}}\,\, :\,\, \alpha\in R_0^s,\,\, {\bf{m}}\in\Z^n,\,\,\,\,m_1>0, m_1\ncong 0\,\,(\text{mod r})\}\\
&\cup\{\delta_{\bf{m}}\,\,:\,\,{\bf{m}}\in\Z^n,\,\, m_1>0\},
\end{aligned}$$
$$\begin{aligned}
    R^{\mu}(-)&=-R^{\mu}(+).
\end{aligned}$$
For $\mathfrak{g}=A_{2\ell}$, we have
$$\begin{aligned}
R^{\mu}&=\{\alpha+\delta_{\bf{m}}\,\,:\,\, \alpha\in R_0,\,\, {\bf{m}}\in\Z^n\}\\&\cup\{r\alpha+\delta_{\bf{m}}\,\, :\,\, \alpha\in R_0^s,\,\, {\bf{m}}\in\Z^n,\,\, m_1\ncong 0\,\,(\text{mod r})\}\\
&\cup\{\delta_{\bf{m}}\,\,:\,\,{\bf{m}}\in\Z^n,\,\, {\bf{m}}\neq {\bf{0}}\},
\end{aligned}$$
$$\begin{aligned}
R^{\mu}(+)&=\{\alpha+\delta_{\bf{m}}\,\,:\,\, \alpha\in R_0(+),\,\, {\bf{m}}\in\Z^n,\,\, m_1=0\}\\&\cup\{\alpha+\delta_{\bf{m}}\,\,:\,\, \alpha\in R_0,\,\, {\bf{m}}\in\Z^n,\,\,m_1>0\}\\&\cup\{r\alpha+\delta_{\bf{m}}\,\, :\,\, \alpha\in R_0^s,\,\, {\bf{m}}\in\Z^n,\,\,\,\,m_1>0, m_1\ncong 0\,\,(\text{mod r})\}\\
&\cup\{\delta_{\bf{m}}\,\,:\,\,{\bf{m}}\in\Z^n,\,\, m_1>0\},
\end{aligned}$$
$$\begin{aligned}
    R^{\mu}(-)&=-R^{\mu}(+).
\end{aligned}$$

Define $R^\mu(0)\coloneqq\{\delta_{\bf{m}}\,\,:\,\,{\bf{m}}\in\Z^n,\,\, m_1=0,\,\,{\bf{m}}\neq {\bf{0}}\}$, then $$R^{\mu} = R^{\mu}(+)\cup R^\mu(0)\cup R^{\mu}(-).$$ A root $\alpha\in R^\mu$ is called an imaginary root if $<\alpha|\alpha>=0$ otherwise it is called a real root. For $1\leq i\leq n$, we set $\alpha_{\ell+i}=\delta_i-\theta^0$, then the set $\{\alpha_i\,\,:\,\,1\leq i\leq \ell+n\}$  forms a simple root system for $\mathcal{T}({\mu})$.

\subsection{\texorpdfstring{Twisted affine subalgebra of $\mathcal{T}(\mu)$}{}}
The twisted toroidal Lie algebra contains a subalgebra isomorphic to the twisted affine Lie algebra which is given by:
$$\mathcal{T}_{\mathrm{aff}}(\mu)= \bigoplus _{{j}={0}}^{{r-1}}\mathfrak{g}_j\otimes t_1^j\C[t_1^{\pm r}] \oplus \C K_1\oplus\C d_1.
$$
The following is the standard triangular decomposition of $\mathcal{T}_{\mathrm{aff}}(\mu)$:
$$\mathcal{T}_{\mathrm{aff}}(\mu)=\mathcal{T}_{\mathrm{aff}}(\mu)^-\oplus \mathcal{T}_{\mathrm{aff}}(\mu)^0\oplus \mathcal{T}_{\mathrm{aff}}(\mu)^+
,$$
where
$$
\begin{aligned}
    \mathcal{T}_{\mathrm{aff}}(\mu)^+&=\mathfrak{n}_0^+\oplus \mathfrak{g}_0\otimes t_1^{r}\C[t_1^{r}] \oplus \bigoplus _{{j}={1}}^{{r-1}}\mathfrak{g}_j\otimes t_1^{j}\C[t_1^{r}],\\
\mathcal{T}_{\mathrm{aff}}(\mu)^-&=\mathfrak{n}_0^-\oplus \mathfrak{g}_0\otimes t_1^{-r}\C[t_1^{-r}] \oplus \bigoplus _{{j}={1}}^{{r-1}}\mathfrak{g}_j\otimes t_1^{j-r}\C[t_1^{-r}],\\
\mathcal{T}_{\mathrm{aff}}(\mu)^0&=\mathfrak{h}_0\oplus \C K_1\oplus\C d_1
.\end{aligned}$$

Let $\mathfrak{h}_{\mathcal{T}_{\mathrm{aff}}(\mu)}=\mathcal{T}_{\mathrm{aff}}(\mu)^0$. Then $\mathfrak{h}_{\mathcal{T}_{\mathrm{aff}}(\mu)}$ is a Cartan subalgebra of $\mathcal{T}_{\mathrm{aff}}(\mu)$. With respect to $\mathfrak{h}_{\mathcal{T}_{\mathrm{aff}}(\mu)}$,
let $R_{\mathrm{aff}}^{\mu}$, $R_{\mathrm{aff}}^{\mu}(+)$, and $R_{\mathrm{aff}}^{\mu}(-)$ denotes the set of all roots, positive roots, and negative roots of $\mathcal{T}_{\mathrm{aff}}(\mu)$, respectively. Then for $\mathfrak{g}=A_{2\ell-1},\, D_{\ell+1},\, D_4$, we have
$$\begin{aligned}
R_{\mathrm{aff}}^{\mu}&=\{\alpha+m\delta_{1}\,\,:\,\, \alpha\in R_0,\,\, {m}\in\Z,\,\, m\cong 0\,(\text{mod r})\}\\&\cup\{\alpha+m\delta_{1}\,\, :\,\, \alpha\in R_0^s,\,\, {m}\in\Z,\,\, m_1\ncong 0\,\,(\text{mod r})\}\\
&\cup\{m\delta_{1}\,\,:\,\,{m}\in\Z,\,\, {m}\neq {0}\},
\end{aligned}$$
$$\begin{aligned}
R_{\mathrm{aff}}^{\mu}(+)&=\{\alpha+m\delta_{1}\,\,:\,\, \alpha\in R_0,\,\, m\in\Z_{>0},\,\, m\cong 0\,(\text{mod r})\}\\&\{\alpha+m\delta_{1}\,\,:\,\, \alpha\in R_0^s,\,\,  m\in\Z_{>0},\,\, m\ncong 0\,(\text{mod r})\}\\
&\cup\{m\delta_{1}\,\,:\,\,{m}\in\Z_{>0}\}\cup R_0(+),
\end{aligned}$$
$$\begin{aligned}
    R_{\mathrm{aff}}^{\mu}(-)&=-R_{\mathrm{aff}}^{\mu}(+).
\end{aligned}$$
For $\mathfrak{g}=A_{2\ell}$, we have
$$\begin{aligned}
R^{\mu}&=\{\alpha+m\delta_{1}\,\,:\,\, \alpha\in R_0,\,\, {m}\in\Z\}\\&\cup\{r\alpha+m\delta_{1}\,\, :\,\, \alpha\in R_0^s,\,\, m\in\Z,\,\, m\ncong 0\,\,(\text{mod r})\}\\
&\cup\{m\delta_{1}\,\,:\,\,{m}\in\Z,\,\, {m}\neq {0}\},
\end{aligned}$$
$$\begin{aligned}
R^{\mu}(+)&=\{\alpha+m\delta_{1}\,\,:\,\, \alpha\in R_0,\,\, m\in\Z_{>0}\}\\&\cup\{r\alpha+m\delta_{1}\,\, :\,\, \alpha\in R_0^s,\,\, m\in\Z_{>0}, m\ncong 0\,\,(\text{mod r})\}\\
&\cup\{m\delta_{1}\,\,:\,\,m\in\Z_{>0}\}\cup R_0(+),
\end{aligned}$$
$$\begin{aligned}
    R^{\mu}(-)&=-R^{\mu}(+).
\end{aligned}$$

Let $$\begin{aligned}
      e_0=&\begin{cases}
     f'_{{\theta}_0}\otimes t_1,&(A_{2\ell}),\\[.5em]
     f^{(1)}_{\theta_s}\otimes t_1={\sum_{j=0}^{r-1}}\mu^{j}(\xi^{r-j}f'_{\theta_{0}})\otimes t_1, &(A_{2\ell-1},D_{\ell+1}, D_4),
\end{cases}\\[.5em]
f_0=&
\begin{cases}
    e'_{{\theta}_0}\otimes t_1^{-1},&(A_{2\ell}),\\[.5em]
    e^{(r-1)}_{\theta_s}\otimes t_1^{-1}={\sum_{j=0}^{r-1}}\mu^{j}(\xi^{j}e'_{\theta_{0}})\otimes t_1^{-1}, &(A_{2\ell-1},D_{\ell+1}, D_4),
    \end{cases}\end{aligned}$$
    and
    $$h_0=[e_0,f_0].$$

 We set $\alpha_{0}=\delta_1-\theta^0$ and $\tilde{I}_0=I_0\cup \{0\}$, then the sets $\{\alpha_i\,\,:\,\,i\in \tilde{I}_0\}$ and $\{\alpha_0^\vee=h_0,\,\,\alpha_i^\vee\,\,:\,\,i\in {I}_0\}$ form a simple root and coroot system for $\mathcal{T}_{\mathrm{aff}}({\mu})$
 . Let $\Lambda_i$, $i\in \tilde{I}_0$ be the fundamental weights of $\mathcal{T}_{\mathrm{aff}}({\mu})$.

\section{\texorpdfstring{Presentation and some Properties of $\mathcal{T}(\mu)$}{}} \label{secpr}
Recently, in \cite{JMMishra}, the MRY presentation of $\mathcal{T}(\mu)$ for $n = 2$ has been obtained. In this section, we obtain the presentation of $\mathcal{T}(\mu)$ for any $n\geq 2$. Also, we introduce two subalgebras of $\mathcal{T}(\mu)$, namely $\mathcal{T}^+(\mu)$ and $\overline{\mathcal{T}}(\mu)$, where the former plays an important role in this paper. 

\subsection{\texorpdfstring{Presentation of $\mathcal{T}(\mu)$}{}}
Let us extend the Cartan matrix $A$ to $\Tilde{A}=(a_{i,j})_{i,j\in \tilde{I}_0}$ by defining:
 $$ \begin{cases}
       \begin{cases}
    a_{0,1}=-1, a_{1,0}=-2, & \text{for $\mathfrak{g}=A_{2\ell}$}.\\[.3em]
    a_{0,0}=2, a_{0,2}=a_{2,0}=-1, & \text{for $\mathfrak{g}=A_{2\ell-1}$}.\\[.3em]
    a_{0,1}=-2, a_{1,0}=-1, & \text{for $\mathfrak{g}=D_{\ell+1}$}.\\[.3em]
     a_{0,1}=a_{1,0}=-1, & \text{for $\mathfrak{g}=D_{4}$}.
  \end{cases}\\[3em]
  a_{0,j}=a_{j,0}=0, & \text{otherwise for all types.}
  \end{cases}$$
  Let $\mathfrak{t(g)}$ be the Lie algebra over $\C$ generated by the symbols $$ \delta_{\underline{r}}(\underline{s}), \,
 \alpha_i(\underline{k}),\, X(\pm \alpha_i,\underline{k}),\,d_j\,(i\in \tilde{I}_0,\, \underline{r}, \underline{s}, \underline{k} \in \Z^{n-1},\, j=1,\dots
,n),
  $$ and satisfying the following relations: \begin{itemize}\setlength\itemsep{1em}
      \item \begin{enumerate}[label=(\roman*)]\setlength\itemsep{.7em}
          \item $\delta_{\underline{r}}(\underline{s})+\delta_{\underline{k}}(\underline{s})=\delta_{\underline{r}+\underline{k}}(\underline{s})$;
          \item $\delta_{\underline{r}}(\underline{r})=0$;
          \item $[\delta_{\underline{r}}(\underline{s}),\delta_{\underline{k}}(\underline{l})]=[\delta_{\underline{r}}(\underline{s}), \alpha_i(\underline{k})]= [\delta_{\underline{r}}(\underline{s}),X(\pm \alpha_i,\underline{k})]=0$;
          \item $[d_1,\delta_{\underline{r}}(\underline{s})]=0, [d_j,\delta_{\underline{r}}(\underline{s})]=s_j\delta_{\underline{r}}(\underline{s}),$ \\[.5em] where $j=2,\dots,n.$
      \end{enumerate}
      \item $[\alpha_0(\underline{k}),\alpha_0(\underline{l})]=\begin{cases}
          2\delta_{\underline{k}}(\underline{k}+\underline{l}), &(A_{2\ell}),\\[.5em]
          2r\delta_{\underline{k}}(\underline{k}+\underline{l}), &(A_{2\ell-1}, D_{\ell+1},D_4).
      \end{cases}$
      \item $[\alpha_0(\underline{k}),\alpha_j(\underline{l})]=\begin{cases}
          ra_{0,j}\delta_{\underline{k}}(\underline{k}+\underline{l}), &(A_{2\ell}, A_{2\ell-1}, D_4),\\[.5em]
          a_{0,j}\delta_{\underline{k}}(\underline{k}+\underline{l}), &( D_{\ell+1}),
      \end{cases}$ \\[.5em] where $j\in I_0$.
      \item $[\alpha_i(\underline{k}),\alpha_j(\underline{l})]=\begin{cases}
          ra_{i,j}\delta_{\underline{k}}(\underline{k}+\underline{l}), &(A_{2\ell}, A_{2\ell-1}, D_4),\\[.5em]
          a_{i,j}\delta_{\underline{k}}(\underline{k}+\underline{l}), &( D_{\ell+1}),
      \end{cases}$\\[.5em]
      where $i,j\in I_0$ with $i\leq j$ and $(i,j)\neq (\ell-1,\ell), (\ell,\ell)$.
      \item $[\alpha_{\ell-1}(\underline{k}),\alpha_\ell(\underline{l})]=\begin{cases}
          4a_{\ell-1,\ell}\delta_{\underline{k}}(\underline{k}+\underline{l}), &(A_{2\ell}),\\[.5em]
          a_{\ell-1,\ell}\delta_{\underline{k}}(\underline{k}+\underline{l}), &(A_{2\ell-1}, D_4),\\[.5em]
          2a_{\ell-1,\ell}\delta_{\underline{k}}(\underline{k}+\underline{l}), &(D_{\ell+1}).
      \end{cases}$
      \item $[\alpha_{\ell}(\underline{k}),\alpha_\ell(\underline{l})]=\begin{cases}
          8\delta_{\underline{k}}(\underline{k}+\underline{l}), &(A_{2\ell}),\\[.5em]
          2\delta_{\underline{k}}(\underline{k}+\underline{l}), &(A_{2\ell-1}, D_4),\\[.5em]
          4\delta_{\underline{k}}(\underline{k}+\underline{l}), &(D_{\ell+1}).
      \end{cases}$
      \item $[\alpha_{i}(\underline{k}),X(\pm \alpha_j,\underline{l})]=\pm a_{i,j}X(\pm \alpha_j,\underline{k}+\underline{l})$,\\[.5em]
      where $i,j\in \tilde{I}_0$.
      \item 
      $[X(\pm \alpha_i,\underline{k}),X(\pm \alpha_i,\underline{l})]=0,$\\[.5em]
      where $i\in\tilde{I}_0$.
      \item $[X( \alpha_i,\underline{k}),X(-\alpha_j,\underline{l})]\\[.5em]
    =\begin{cases}
    {\delta}_{i,j}\{\alpha_i(\underline{k}+\underline{l})+(r(1+\delta_{i,\ell}(r-1))-\delta_{i,0}(r-1))\delta_{\underline{k}}(\underline{k}+\underline{l})\}, & (A_{2\ell}),\\[.5em]
        {\delta}_{i,j}\{\alpha_i(\underline{k}+\underline{l})+(r-\delta_{i,\ell}(r-1))\delta_{\underline{k}}(\underline{k}+\underline{l})\}, & (A_{2\ell-1}, D_4),\\[.5em]
        {\delta}_{i,j}\{\alpha_i(\underline{k}+\underline{l})+(1+(\delta_{i,0}+\delta_{i,\ell})(r-1))\delta_{\underline{k}}(\underline{k}+\underline{l})\}, & (D_{\ell+1}),
    \end{cases}$\\[.5em]
     where $i,j\in\tilde{I}_0$.
    \item $\text{ad}X( \pm\alpha_i,\underline{k})X(\pm\alpha_j,\underline{l})=0,$\\[.5em]
    where $i,j\in\tilde{I}_0 \, \text{with} \, i\neq j\, \text{and} \, a_{i,j}=0.$
    \item $\text{ad}X( \pm\alpha_i,\underline{k})\text{ad}X( \pm\alpha_i,\underline{l})X(\pm\alpha_j,\underline{s})=0,$\\[.5em]
    where $i,j\in\tilde{I}_0 \, \text{with} \, i\neq j\, \text{and} \, a_{i,j}=-1.$
    \item $\text{ad}X( \pm\alpha_i,\underline{k})\text{ad}X( \pm\alpha_i,\underline{l})\text{ad}X( \pm\alpha_i,\underline{s})X(\pm\alpha_j,\underline{r})=0,$\\[.5em]
    where $i,j\in\tilde{I}_0 \, \text{with} \, i\neq j\, \text{and} \, a_{i,j}=-2.$
    
     \item $\text{ad}X( \pm\alpha_i,\underline{k})\text{ad}X( \pm\alpha_i,\underline{l})\text{ad}X( \pm\alpha_i,\underline{s})\text{ad}X( \pm\alpha_i,\underline{r})X(\pm\alpha_j,\underline{u})=0,$\\[.5em]
    where $i,j\in\tilde{I}_0 \, \text{with} \, i\neq j\, \text{and} \, a_{i,j}=-3.$

    \item $[d_1,\alpha_i(\underline{k})]=0, [d_1,X( \pm\alpha_i,\underline{k})]=\pm\delta_{i,0}X( \pm\alpha_i,\underline{k}),$\\[.5em]
    where $i\in \tilde{I}_0$.
    \item $[d_j,\alpha_i(\underline{k})]=k_j\alpha_i(\underline{k}),[d_j,X( \pm\alpha_i,\underline{k})]=k_jX( \pm\alpha_i,\underline{k}),$\\[.5em]
    where $i\in \tilde{I}_0$ and $j=2,\dots,n.$
    \item $[d_i,d_j]=0,$\\[.5em]
    where $i,j=1,\dots,n.$
    \end{itemize}
    \begin{thm}
     As a Lie algebra, $\mathfrak{t(g)}$ and $\mathcal{T}(\mu)$ are isomorphic. The isomorphism $\phi:\mathfrak{t(g)}\rightarrow\mathcal{T}(\mu)$ is explicitly given by:\vspace{-7em}
      $$\left.\begin{aligned}
 \delta_{\underline{r}}(\underline{s}) &\mapsto {\sum_{i=2}^{n}}r_it^{\underline{s}}K_i\\[.5em]
\alpha_0(\underline{k})  &\mapsto -h'_{\theta_{0}}\otimes t^{\underline{k}}+t^{\underline k}K_1\\[.5em]
  \alpha_i(\underline{k})&\mapsto \Big(1+\delta_{i,\ell}\Big) h^{(0)}_i\otimes t^{\underline k} \\[.5em]
  X(\alpha_0,\underline{k})&\mapsto -f'_{{\theta}_0}\otimes t_1t^{\underline k}\\[.5em]
  X(-\alpha_0,\underline{k})&\mapsto -e'_{{\theta}_0}\otimes t_1^{-1}t^{\underline k}\\[.5em]
  X(\alpha_i,\underline{k})&\mapsto \Big(1+\delta_{i,\ell}\Big(\sqrt{2}-1\Big)\Big) e^{(0)}_i\otimes t^{\underline k}\\[.5em]
  X(-\alpha_i,\underline{k})&\mapsto \Big(1+\delta_{i,\ell}\Big(\sqrt{2}-1\Big)\Big) f^{(0)}_i\otimes t^{\underline k}\\[.5em]
  d_j&\mapsto d_j
\end{aligned}\right\}\text{for $\mathfrak{g}=A_{2\ell},$}$$
$$\left.\begin{aligned}
 \delta_{\underline{r}}(\underline{s}) &\mapsto {\sum_{i=2}^{n}}r_it^{\underline{s}}K_i \\[.5em]
\alpha_0(\underline{k})  &\mapsto {\sum_{j=0}^{r-1}}-\mu^{j}(h'_{\theta_{0}})\otimes t^{\underline{k}}+rt^{\underline k}K_1\\[.5em]
\alpha_i(\underline{k})&\mapsto \Big(1-\delta_{i,\mu(i)}\Big(1-\frac{1}{r}\Big)\Big) h^{(0)}_i\otimes t^{\underline k} \\[.5em]
  X(\alpha_0,\underline{k})&\mapsto {\sum_{j=0}^{r-1}}-\mu^{j}(\xi^{r-j}f'_{\theta_{0}})\otimes t_1t^{\underline k}\\[.5em]
  X(-\alpha_0,\underline{k})&\mapsto {\sum_{j=0}^{r-1}}-\mu^{j}(\xi^{j}e'_{\theta_{0}})\otimes t_1^{-1}t^{\underline k}\\[.5em]
  X(\alpha_i,\underline{k})&\mapsto  \Big(1-\delta_{i,\mu(i)}\Big(1-\frac{1}{r}\Big)\Big) e^{(0)}_i\otimes t^{\underline k}\\[.5em]
  X(-\alpha_i,\underline{k})&\mapsto  \Big(1-\delta_{i,\mu(i)}\Big(1-\frac{1}{r}\Big)\Big) f^{(0)}_i\otimes t^{\underline k}\\[.5em]
  d_j&\mapsto d_j
\end{aligned}\right\}\text{for $\mathfrak{g}=A_{2\ell-1},\, D_{\ell+1},\, D_4$}.$$
  \end{thm}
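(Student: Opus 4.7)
The plan is to establish the isomorphism in three stages: well-definedness of $\phi$, surjectivity, and injectivity, with the last being the main obstacle.

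For well-definedness I would verify each defining relation of $\mathfrak{t(g)}$ holds after applying $\phi$. The relations group naturally. Relations (i)--(iv) on $\delta_{\underline r}(\underline s)$ are immediate from the Kähler-differential description of $\mathcal Z'$ and the bracket of $\mathfrak D$ with $\mathcal Z'$. The Heisenberg-type commutators $[\alpha_i(\underline k),\alpha_j(\underline l)]$ require combining the toroidal central term $(x|y)\sum m_i t^{\bf m+n}K_i$ with the extra $K_1$-contribution carried by $\phi(\alpha_0(\underline k))$; the scalars $r$, $\sqrt{2}$ and $(1+\delta_{i,\ell})$ in the presentation are calibrated exactly so that these identities hold uniformly across the four types $A_{2\ell}$, $A_{2\ell-1}$, $D_{\ell+1}$, $D_4$, and a type-by-type check using the normalization of $(\cdot|\cdot)$ from Subsection \ref{sec3} suffices. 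The $[\alpha_i(\underline k),X(\pm\alpha_j,\underline l)]$ relations follow from how the images of $\alpha_i$ pair against the simple roots of $\mathcal T_{\mathrm{aff}}(\mu)$. The $[X(\alpha_i,\underline k),X(-\alpha_j,\underline l)]$ relations reproduce the Cartan piece plus the correct central term, again by a direct computation using the bracket on $L_n(\mathfrak g)\oplus \mathcal Z$. The Serre-type relations and the derivation actions reduce to the corresponding identities inside $\mathcal T_{\mathrm{aff}}(\mu)$ tensored with $t^{\underline k}$.

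Surjectivity follows by producing each summand of $\mathcal T(\mu)$ in the image. At $\underline k=\underline 0$ the images of $X(\pm\alpha_i,\underline 0)$ and $\alpha_i(\underline 0)$ for $i\in\tilde I_0$ are (up to nonzero scalars) the Chevalley generators of $\mathcal T_{\mathrm{aff}}(\mu)$, so iterated brackets produce all of $\bigoplus_{j}\mathfrak g_j\otimes t_1^j\mathbb C[t_1^{\pm r}]$. Replacing $\underline 0$ by arbitrary $\underline k$ in these bracket chains, and using that $X(\pm\alpha_i,\underline k)$ maps to the same root vector tensored with $t^{\underline k}$, yields every $\mathfrak g_j\otimes t_1^j t^{\underline k}\mathbb C[t_1^{\pm r}]$ summand. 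The images of $\delta_{\underline r}(\underline s)$ span the codimension-one subspace $\sum_{i=2}^{n}r_i t^{\underline s}K_i\subset\mathcal Z'|_{m_1=0}$; the missing $t^{\underline s}K_1$ direction is recovered from $\phi(\alpha_0(\underline s))$ after subtracting the recoverable $h'_{\theta_0}$-part, and all remaining $t_1^{rm_1}t^{\underline s}K_i$ with $m_1\neq 0$ come from bracketing $X(\alpha_0,\underline k)$ against $X(-\alpha_0,\underline l)$ via relation (x). The $d_j$ are their own images.

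Injectivity is the main obstacle. The strategy is to combine the two-variable MRY presentation established in \cite{JMMishra} with a PBW-style dimension count. For each $j\in\{2,\dots,n\}$, the subalgebra of $\mathfrak{t(g)}$ generated by elements whose multi-index $\underline k$ is supported only on the $j$-th coordinate maps isomorphically onto its image by \cite{JMMishra}. Then I would show that the defining relations are sufficient to straighten an arbitrary word in the generators of $\mathfrak{t(g)}$ into a linear combination of ordered monomials indexed by a PBW basis of $\mathcal T(\mu)$, yielding $\dim\mathfrak{t(g)}_\lambda\le\dim\mathcal T(\mu)_\lambda$ in each weight space with respect to $\mathfrak h_{\mathcal T(\mu)}$; combined with surjectivity this forces equality and hence injectivity. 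The delicate point is the central part: relations (b)--(d) produce the same central element through several distinct brackets, and one must verify that the only redundancy among the $\delta_{\underline r}(\underline s)$ is the Kähler relation $\sum_i s_i t^{\underline s}K_i=0$, matching the codimension built into the definition of $\mathcal Z'$. Managing this central bookkeeping uniformly across the four types, together with the $K_1$-corrections forced by $\alpha_0$, is where the proof becomes technical, but everything reduces to the $n=2$ case plus linear algebra once the straightening algorithm is in place.
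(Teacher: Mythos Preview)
Your approach is correct and in fact considerably more detailed than the paper's own proof. The paper's argument is extremely terse: it asserts that ``it is sufficient to prove that $\phi$ preserves the defining relations,'' verifies the single bracket $[\phi(X(\alpha_0,\underline k)),\phi(X(-\alpha_0,\underline l))]$ explicitly in each type, and declares the remaining relations follow by similar calculations. Surjectivity and injectivity are not addressed in the proof at all; the paper is implicitly relying on the $n=2$ case established in \cite{JMMishra} and treating the extension to general $n$ as routine.

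Your three-stage outline therefore matches the paper's strategy on well-definedness (you simply sketch more of the relation checks), and your surjectivity and injectivity discussions fill in exactly what the paper leaves to the reader and to \cite{JMMishra}. The reduction of injectivity to the two-variable MRY presentation plus a graded dimension bound is the natural way to make the paper's implicit appeal rigorous; the ``straightening'' step you flag as technical is precisely the content absorbed from \cite{JMMishra} and the original MRY argument. So there is no genuine divergence of method---your write-up is just more honest about where the work lies.
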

  \begin{proof}
       It is sufficient to prove that $\phi$ preserves the defining relations. We prove only $ [\phi(X(\alpha_0,\underline{k})), \phi(X(-\alpha_0,\underline{l}))]=\phi([X(\alpha_0,\underline{k}), X(-\alpha_0,\underline{l})]$ and the other relations will follow with a similar calculation.
For $A_{2\ell}$,
\vspace{.01em} $$\begin{aligned}
[\phi(X(\alpha_0,\underline{k})),& \phi(X(-\alpha_0,\underline{l}))]=[-f'_{{\theta}_0}\otimes t_1t^{\underline k},-e'_{{\theta}_0}\otimes t_1^{-1}t^{\underline l}]\\
&=-h'_{{\theta}_0}\otimes t^{\underline k+\underline{l}} + (f'_{{\theta}_0}| e'_{{\theta}_0})\Big(t^{\underline k}K_1+{\sum_{p=2}^{n}}k_it^{\underline k+\underline l}K_p\Big)\\
&=-h'_{{\theta}_0}\otimes t^{\underline k+\underline{l}} +t^{\underline k}K_1+{\sum_{p=2}^{n}}k_it^{\underline k+\underline l}K_p\\
&=\phi(\alpha_0(\underline{k}+\underline{l})+\delta_{\underline{k}}(\underline{k}+\underline{l}))\\
&=\phi([X(\alpha_0,\underline{k}), X(-\alpha_0,\underline{l})].
\end{aligned}$$
       For $A_{2\ell-1},\, D_{\ell+1},\, D_4,$\vspace{-.25em}
$$\begin{aligned}
[\phi(X(\alpha_0,\underline{k})),& \phi(X(-\alpha_0,\underline{l}))] =\Big[{\sum_{i=0}^{r-1}}-\mu^{i}(\xi^{r-i}f'_{\theta_{0}})\otimes t_1t^{\underline k}, {\sum_{j=0}^{r-1}}-\mu^{j}(\xi^{j}e'_{\theta_{0}})\otimes t_1^{-1}t^{\underline l}\Big]\\
&={\sum_{i,\,j=0}^{r-1}}\xi^{r-i+j}\Big(\Big[\mu^{i}(f'_{\theta_{0}}),\mu^{j}(e'_{\theta_{0}})\Big]\otimes t^{\underline k+\underline l}+ (\mu^{i}(f'_{\theta_{0}})| \mu^{j}(e'_{\theta_{0}}))
{\sum_{p=2}^{n}}k_it^{\underline k+\underline l}K_p \Big).
\end{aligned}$$
Direct calculation gives us the following:
$$\Big[\mu^{i}(f'_{\theta_{0}}),\mu^{j}(e'_{\theta_{0}})\Big]=\begin{cases}
    -h'_{\theta_0},&\text{if}\,\, i=j=0 \, (A_{2\ell-1},\, D_{\ell+1},\, D_4), \\[.5em]
    -\mu(h'_{\theta_0}), &\text{if}\,\, i=j=1\, (A_{2\ell-1},D_{\ell+1}, D_4),\\[.5em]
    -\mu^2(h'_{\theta_0}), &\text{if}\,\, i=j=2\, (D_4),\\[.5em]
    0,&\text{if}\,\, i\neq j\, (A_{2\ell-1},D_{\ell+1}, D_4),
\end{cases}$$
and
$$(\mu^{i}(f'_{\theta_{0}})| \mu^{j}(e'_{\theta_{0}}))=\begin{cases}
    r,&\text{if}\,\, i=j\, (A_{2\ell-1},D_{\ell+1}, D_4),\\[.5em]
    0,&\text{if}\,\, i\neq j\, (A_{2\ell-1},D_{\ell+1}, D_4).
\end{cases}
$$

Hence,
$$\begin{aligned}
[\phi(X(\alpha_0,\underline{k})),& \phi(X(-\alpha_0,\underline{l}))]={\sum_{j=0}^{r-1}}-\mu^{j}(h'_{\theta_{0}})\otimes t^{\underline{k}+\underline{l}}+r\Big(t^{\underline k}K_1+{\sum_{p=2}^{n}}k_it^{\underline k+\underline l}K_p\Big)\\
&=\phi(\alpha_0(\underline{k}+\underline{l})+r \delta_{\underline{k}}(\underline{k}+\underline{l}))\\
&=\phi([X(\alpha_0,\underline{k}), X(-\alpha_0,\underline{l})].
\end{aligned}$$
  \end{proof}
   \subsection{\texorpdfstring{Some subalgebras of $\mathcal{T}(\mu)$}{}}
 Let $\mathcal{T}^+(\mu)$ be the subalgebra of $\mathcal{T}(\mu)$ generated by $\{e_ {i,\underline k}= e^{(0)}_i \otimes t^{\underline k},\, f_ {i,\underline k}= f^{(0)}_i \otimes t^{\underline k},\, d_1: i\in {I_0},\, \underline k\in \Z^{n-1}_{\geq 0}\}\cup \{e_{0,\underline k}= e_0 \otimes t^{\underline k},\, f_{0,\underline k}= f_0 \otimes t^{\underline k},\, \underline k\in \Z^{n-1}_{\geq 0}\}$. We have the following explicit form of $\mathcal{T}^+(\mu)$:
$$\begin{aligned}
    \mathcal{T}^+(\mu)= \bigoplus _{{j}={0}}^{{r-1}}\mathfrak{g}_j\otimes&\,\, t_1^{j}\C[t_1^{\pm r},t_2,\dots,t_n] \oplus \underset{k_1\in \Z,\, \underline {k}\in \Z_{\ge 0}^{n-1}} {\sum} \C t_1^{rk_1}t^{{\underline {k}}}K_1\\ &\oplus \underset{k_1\in \Z,\, \underline {k}\in \Z_{\geq0}^{n-1}, k_i\geq 1} {\sum_{i=2}^{n}} \C t_1^{rk_1}t^{{\underline {k}}}K_i \oplus \C d_1.
\end{aligned}
$$
Let $\overline{\mathcal{T}}(\mu)$ be the subalgebra of $\mathcal{T}(\mu)$ generated by $\{e_ {i,\underline k}= e^{(0)}_i \otimes t^{\underline k},\, f_ {i,\underline k}= f^{(0)}_i \otimes t^{\underline k},\,  d_1: i\in I_0,\,\underline k \in \Z^{n-1}\}\cup \{e_{0,\underline k}= e_0 \otimes t^{\underline k},\, f_{0,\underline k}= f_0 \otimes t^{\underline k},\, \underline k\in \Z^{n-1}\}$. We have the following explicit form of $\overline{\mathcal{T}}(\mu)$:
$$\overline{\mathcal{T}}(\mu)= \bigoplus _{{j}={0}}^{{r-1}}\mathfrak{g}_j\otimes t_1^{j}\C[t_1^{\pm r},t_2^{\pm 1},\dots,t_n^{\pm 1}] \oplus \underset{k_1\in \Z,\, \underline {k}\in \Z^{n-1}} {\sum_{i=1}^{n}} \C t_1^{rk_1}t^{{\underline {k}}}K_i \oplus \C d_1.
$$Let $\overline{\mathfrak{h}}_{\mathcal{T}(\mu)}= \mathcal{T}(\mu)^0\bigcap \overline{\mathcal{T}}(\mu)$. Then we have
$$\overline{\mathfrak{h}}_{\mathcal{T}(\mu)}=\mathfrak{h}_0\otimes\C[t_2^{\pm1},\dots,t_n^{\pm1}]\oplus {\sum_{i=1}^{n}}\C[t_2^{\pm1},\dots,t_n^{\pm1}]K_i\oplus \C d_1.
$$
\subsection{\texorpdfstring{Automorphisms of $\mathcal{T}(\mu)$}{}}

Consider the following Lie algebra automorphisms of $\mathcal{T}(\mu)$ which were first defined in \cite{FIBKVG} in order to study representation of affine Kac-Moody algebras.$$
\Psi_0 \coloneqq\mathrm{exp\, ad}(e_0)\mathrm{exp\, ad}(-f_0)\mathrm{exp\, ad}(e_0),$$
$$\Psi_{\theta_s}\coloneqq\mathrm{exp\, ad}(e^{(0)}_{\theta_{s}})\mathrm{exp\, ad}(-f^{(0)}_{\theta_{s}})\mathrm{exp\, ad}(e^{(0)}_{\theta_{s}}).
$$
\begin{lemma} \label{RP1} For $\mathfrak{g}=A_{2\ell-1},\, D_{\ell+1},\, D_4,$ we have, $$\Psi_0\Psi_{\theta_s}(e^{(j)}_{\theta_s}\otimes t_1^{rm_1+j}t^{\underline{m}})=e_{\theta_s}^{(r+j-2)}\otimes t_1^{rm_1+j-2}t^{\underline{m}},$$ for all $m_1\in \Z$, $\underline{m}\in\Z^{n-1}$, and $0\leq j\leq r-1.$

\end{lemma}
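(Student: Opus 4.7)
The plan is to reduce the lemma to two successive $\mathfrak{sl}_2$-Weyl reflection calculations inside $\mathcal{T}(\mu)$, one for $\Psi_{\theta_s}$ and one for $\Psi_0$, exploiting in each case that the input vector is the extremal vector of a three-dimensional $\mathfrak{sl}_2$-submodule so that the reflection can be written as a double commutator.

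For $\Psi_{\theta_s}$ I first observe that its defining generators $e^{(0)}_{\theta_s}, f^{(0)}_{\theta_s}$ lie in $\mathfrak{g}_0 \otimes 1 \subset \mathcal{T}(\mu)$, so their adjoint action on an element $y \otimes P$ produces no toroidal central term and is computed entirely in the $\mathfrak{g}$-factor. The bracket relations listed in Section \ref{secp} identify $\{e^{(j)}_{\theta_s}, h^{(j)}_{\theta_s}, f^{(j)}_{\theta_s}\}$ with the adjoint representation of the $\mathfrak{sl}_2$-triple $(e^{(0)}_{\theta_s}, h^{(0)}_{\theta_s}, f^{(0)}_{\theta_s})$. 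A short $\mathfrak{sl}_2$-exercise shows that the Weyl reflection sends $e \mapsto -f$ in the adjoint module, giving
$$\Psi_{\theta_s}(e^{(j)}_{\theta_s} \otimes t_1^{rm_1+j} t^{\underline{m}}) = -f^{(j)}_{\theta_s} \otimes t_1^{rm_1+j} t^{\underline{m}}.$$

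Writing $z$ for the right-hand side above, I would next compute $\Psi_0(z)$ using the $\mathfrak{sl}_2$-triple $(e_0, h_0, f_0)$ with $h_0 = [e_0, f_0] = -h^{(0)}_{\theta_s} + rK_1$ (the toroidal bracket contributes this central term because $(f^{(1)}_{\theta_s}|e^{(r-1)}_{\theta_s}) = r$). The crux is to show that $z$ is a highest weight vector of a three-dimensional $\mathfrak{sl}_2(\alpha_0)$-submodule, that is $[e_0, z] = 0$, $[h_0, z] = 2z$, and $\mathrm{ad}(f_0)^3(z) = 0$. The first and third vanishings follow because $\pm 2\theta_s$ is not a root of $\mathfrak{g}$, so the relevant $\mathfrak{g}$-brackets and invariant-form pairings are zero; the second is immediate from $[h^{(0)}_{\theta_s}, f^{(j)}_{\theta_s}] = -2 f^{(j)}_{\theta_s}$ together with the centrality of $K_1$. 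Granting these, the standard $\mathfrak{sl}_2$-formula for the Weyl reflection on a weight-$2$ highest weight vector of a three-dimensional irreducible gives $\Psi_0(z) = \tfrac12 [f_0, [f_0, z]]$.

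It remains to evaluate $[f_0, [f_0, z]]$ directly from the toroidal bracket. The inner commutator $[f_0, z]$ produces $-h^{((j-1)\bmod r)}_{\theta_s} \otimes t_1^{rm_1+j-1} t^{\underline{m}}$ plus a $K_1$-central term that appears only in the resonant case $j = 1$; this term is then annihilated by the outer $[f_0, -]$ because $K_1$ is central. Using $[e^{(r-1)}_{\theta_s}, h^{((j-1)\bmod r)}_{\theta_s}] = -2 e^{(r+j-2)}_{\theta_s}$ and the vanishing of the corresponding invariant-form pairing on weight grounds, one concludes $[f_0, [f_0, z]] = 2\, e^{(r+j-2)}_{\theta_s} \otimes t_1^{rm_1+j-2} t^{\underline{m}}$, and division by $2$ yields the statement. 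The only genuine bookkeeping hazards are the resonant $K_1$-term at $j = 1$ and the mod-$r$ reduction in the superscript; both are controlled by the conventions already in place and do not affect the final element.
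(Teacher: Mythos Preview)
Your proposal is correct and follows the same two-step decomposition as the paper: first establish $\Psi_{\theta_s}(e^{(j)}_{\theta_s}\otimes t_1^{rm_1+j}t^{\underline{m}})=-f^{(j)}_{\theta_s}\otimes t_1^{rm_1+j}t^{\underline{m}}$, then compute $\Psi_0$ on the result. Your first step is identical to the paper's Claim~1.

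For the second step the paper proceeds by brute force: it expands each of the three exponentials $\exp(\mathrm{ad}\,e_0)$, $\exp(-\mathrm{ad}\,f_0)$, $\exp(\mathrm{ad}\,e_0)$ acting on $f^{(j)}_{\theta_s}\otimes t_1^{rm_1+j}t^{\underline{m}}$ term by term, keeps track of the intermediate expressions (including the $K_1$-terms arising from $(e^{(r-1)}_{\theta_s}\mid f^{(j)}_{\theta_s})$ and $(f^{(1)}_{\theta_s}\mid e^{(r-j)}_{\theta_s})$), and then checks that all but the desired term cancel. You instead recognize $z$ as an $\mathfrak{sl}_2(\alpha_0)$ highest weight vector of weight~$2$ in a three-dimensional submodule and invoke the standard identity $\Psi_0(z)=\tfrac12(\mathrm{ad}\,f_0)^2(z)$, reducing the whole computation to a single double commutator. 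This is a genuine simplification: it replaces an explicit six-term cancellation by a structural observation, at the modest cost of having to verify $[e_0,z]=0$ and $(\mathrm{ad}\,f_0)^3(z)=0$ (both of which you correctly attribute to $[e^{(i)}_{\theta_s},e^{(j)}_{\theta_s}]=[f^{(i)}_{\theta_s},f^{(j)}_{\theta_s}]=0$ and the vanishing of the invariant form on $\mathfrak{g}_{\pm\theta_s}\times\mathfrak{g}_{\pm\theta_s}$). Your handling of the resonant $K_1$-term at $j=1$ and the mod-$r$ bookkeeping is accurate.
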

\begin{proof}\noindent
We have the following two claims, and using them, we get the desired result:

\noindent {\bf Claim 1 :} $ \Psi_{\theta_s}(e^{(j)}_{\theta_s}\otimes t_1^{rm_1+j}t^{\underline{m}})=-f_{\theta_s}^{(j)}\otimes t_1^{rm_1+j}t^{\underline{m}}$, for all $m_1\in \Z$, $\underline{m}\in\Z^{n-1}$, and $0\leq j\leq r-1.$

   \noindent Direct calculation gives us the following:
$$\begin{aligned}\mathrm{exp\, ad}(e^{(0)}_{\theta_{s}})(e^{(j)}_{\theta_s}\otimes t_1^{rm_1+j}t^{\underline{m}})&=e^{(j)}_{\theta_s}\otimes t_1^{rm_1+j}t^{\underline{m}},\\
\mathrm{exp\, ad}(-f^{(0)}_{\theta_{s}})(e^{(j)}_{\theta_s}\otimes t_1^{rm_1+j}t^{\underline{m}})&=e^{(j)}_{\theta_s}\otimes t_1^{rm_1+j}t^{\underline{m}}-[f^{(0)}_{\theta_{s}},e^{(j)}_{\theta_s}]\otimes t_1^{rm_1+j}t^{\underline{m}}\\
&\,\,\,\,\,\,\,\,-f^{(j)}_{\theta_s}\otimes t_1^{rm_1+j}t^{\underline{m}},\\
\mathrm{exp\, ad}(e^{(0)}_{\theta_{s}})([f^{(0)}_{\theta_{s}},e^{(j)}_{\theta_s}]\otimes t_1^{rm_1+j}t^{\underline{m}})&=[f^{(0)}_{\theta_{s}},e^{(j)}_{\theta_s}]\otimes t_1^{rm_1+j}t^{\underline{m}}+2e^{(j)}_{\theta_s}\otimes t_1^{rm_1+j}t^{\underline{m}},\\
\mathrm{exp\, ad}(e^{(0)}_{\theta_{s}})(f^{(j)}_{\theta_s}\otimes t_1^{rm_1+j}t^{\underline{m}})&=f^{(j)}_{\theta_s}\otimes t_1^{rm_1+j}t^{\underline{m}}+[e^{(0)}_{\theta_{s}},f^{(j)}_{\theta_s}]\otimes t_1^{rm_1+j}t^{\underline{m}}\\
&\,\,\,\,\,\,\,\,-e^{(j)}_{\theta_s}\otimes t_1^{rm_1+j}t^{\underline{m}}.
\end{aligned}
$$

\noindent Hence,
$$\begin{aligned}\Psi_{\theta_s}(e^{(j)}_{\theta_s}\otimes t_1^{rm_1+j}t^{\underline{m}})&=e^{(j)}_{\theta_s}\otimes t_1^{rm_1+j}t^{\underline{m}}-[f^{(0)}_{\theta_{s}},e^{(j)}_{\theta_s}]\otimes t_1^{rm_1+j}t^{\underline{m}}-2e^{(j)}_{\theta_s}\otimes t_1^{rm_1+j}t^{\underline{m}}\\&\,\,\,\,\,\,\,\,-f^{(j)}_{\theta_s}\otimes t_1^{rm_1+j}t^{\underline{m}}-[e^{(0)}_{\theta_{s}},f^{(j)}_{\theta_s}]\otimes t_1^{rm_1+j}t^{\underline{m}}+e^{(j)}_{\theta_s}\otimes t_1^{rm_1+j}t^{\underline{m}}\\&=-f^{(j)}_{\theta_s}\otimes t_1^{rm_1+j}t^{\underline{m}}.\end{aligned} $$

\noindent{\bf Claim 2 :} $\Psi_0(f_{\theta_s}^{(j)}\otimes t_1^{rm_1+j}t^{\underline{m}})=-e_{\theta_s}^{(r-j)}\otimes t_1^{rm_1+j-2}t^{\underline{m}}$, for all $m_1\in \Z$, $\underline{m}\in\Z^{n-1}$, and $0\leq j\leq r-1.$

\noindent Direct calculation gives us the following:
$$\begin{aligned}
    \mathrm{exp\, ad}(e_0)(f_{\theta_s}^{(j)}\otimes t_1^{rm_1+j}t^{\underline{m}})&=f_{\theta_s}^{(j)}\otimes t_1^{rm_1+j}t^{\underline{m}},   
\\
  \mathrm{exp\, ad}(-f_0)(f_{\theta_s}^{(j)}\otimes t_1^{rm_1+j}t^{\underline{m}})&=f_{\theta_s}^{(j)}\otimes t_1^{rm_1+j}t^{\underline{m}}-[e^{(r-1)}_{\theta_s},f_{\theta_s}^{(j)}]\otimes t_1^{rm_1+j-1}t^{\underline{m}}\\&\,\,\,\,\,\,\,\,+ (e^{(r-1)}_{\theta_s}|f_{\theta_s}^{(j)})t_1^{rm_1+j-1}t^{\underline{m}}K_1 -e^{(r-j)}_{\theta_s}\otimes t_1^{rm_1+j-2}t^{\underline{m}},\end{aligned}$$
  $$\begin{aligned}
 \mathrm{exp\, ad}(e_0)([e^{(r-1)}_{\theta_s},f_{\theta_s}^{(j)}]\otimes t_1^{rm_1+j-1}t^{\underline{m}})&=[e^{(r-1)}_{\theta_s},f_{\theta_s}^{(j)}]\otimes t_1^{rm_1+j-1}t^{\underline{m}}+2f_{\theta_s}^{(j)}\otimes t_1^{rm_1+j}t^{\underline{m}},\\
    \mathrm{exp\, ad}(e_0)(e^{(r-j)}_{\theta_s}\otimes t_1^{rm_1+j-2}t^{\underline{m}})&=e^{(r-j)}_{\theta_s}\otimes t_1^{rm_1+j-2}t^{\underline{m}}+[f^{(1)}_{\theta_s},e^{(r-j)}_{\theta_s}]\otimes t_1^{rm_1+j-1}t^{\underline{m}}\\&\,\,\,\,\,\,\,\,+(f^{(1)}_{\theta_s}|e^{(r-j)}_{\theta_s})t_1^{rm_1+j-1}t^{\underline{m}}K_1-f_{\theta_s}^{(j)}\otimes t_1^{rm_1+j}t^{\underline{m}},\\
[e^{(r-1)}_{\theta_s},f_{\theta_s}^{(j)}]&=-[f^{(1)}_{\theta_s},e^{(r-j)}_{\theta_s}] \,\text{for all}\, 0\leq j\leq r-1.
\end{aligned}$$
    \noindent Hence,
   $$ \begin{aligned}
        \Psi_0(f_{\theta_s}^{(j)}\otimes t_1^{rm_1+j}t^{\underline{m}})&=f_{\theta_s}^{(j)}\otimes t_1^{rm_1+j}t^{\underline{m}}-[e^{(r-1)}_{\theta_s},f_{\theta_s}^{(j)}]\otimes t_1^{rm_1+j-1}t^{\underline{m}}\\&\,\,\,\,\,\,\,\,-2f_{\theta_s}^{(j)}\otimes t_1^{rm_1+j}t^{\underline{m}}+(e^{(r-1)}_{\theta_s}|f_{\theta_s}^{(j)})t_1^{rm_1+j-1}t^{\underline{m}}K_1\\&\,\,\,\,\,\,\,\,-e^{(r-j)}_{\theta_s}\otimes t_1^{rm_1+j-2}t^{\underline{m}}-[f^{(1)}_{\theta_s},e^{(r-j)}_{\theta_s}]\otimes t_1^{rm_1+j-1}t^{\underline{m}}
        \\&\,\,\,\,\,\,\,\,-(f^{(1)}_{\theta_s}|e^{(r-j)}_{\theta_s}) t_1^{rm_1+j-1}t^{\underline{m}}K_1+f_{\theta_s}^{(j)}\otimes t_1^{rm_1+j}t^{\underline{m}}\\&=-e_{\theta_s}^{(r-j)}\otimes t_1^{rm_1+j-2}t^{\underline{m}}.
   \end{aligned} $$
\end{proof}
\subsection{Garland identities}\label{subsec4.2}
Now we recall the Garland identities in Lie algebra $U(L(\mathfrak{sl}_2))$. Let $x, y, h$ be a $\mathfrak{sl}_2$ triplet. For indeterminate $u$, consider the power series ${\bf{P}}_h (u)$ defined by
$${\bf{P}}_h (u) = \mathrm{exp} \Big( - \sum_{j = 1}^{\infty}\frac{h \otimes t^{j}}{j} u^{j}\Big) = \sum_{s = 0}^{\infty} {p_{h}^{(s)}}u^s    ,$$
where $p_{h}^{(s)}$ denotes the coefficient of $u^{s}$ in the power series ${\bf{P}}_h (u)$.

The following are the Garland identities:
For $j \geq 1$ we have
$$ (x \otimes t)^{j}(y \otimes 1)^{j+1} - \sum_{m = 0}^{j}
(y \otimes t^{j-m}) p_{h}^{(m)} \in U(L(\mathfrak{sl}_2)) (L(\mathbb{C} x)),$$
$$(x \otimes t)^{j+1}(y \otimes 1)^{j+1} - p_{h}^{(j+1)}
\in U(L(\mathfrak{sl}_2)) (L(\mathbb{C} x)).$$
\section{Global and Local Weyl Modules} \label{secgl}
This section contains the definitions of global and local Weyl modules along with their properties. We begin with:
\subsection{Character}
Let $\mathcal{V}$ be a $\mathcal{T}(\mu)$-module (hence $\mathcal{T}^+(\mu)$,\,\,$\mathcal{T}_{\mathrm{aff}}(\mu)$-module); furthermore, as a $\mathcal{T}_{\mathrm{aff}}(\mu)$-module, we also assume that $\mathcal{V}$ has the weight space decomposition and all the weight spaces are finite dimensional. For a parameter $q_1$ we define the $q_1$-character of the $\mathcal{T}_{\mathrm{aff}}(\mu)$-module $\mathcal{V}$ by
 $$\mathrm{ch_{q_1}}(\mathcal{V})\coloneqq\sum_{\lambda\in \mathfrak{h}_0^*, m\in \Z}(\mathrm{dim}\mathcal{V}_{\lambda-m\delta_1})e^{\lambda}q_1^m.$$
 Note that each weight space $\mathcal{V}_{\lambda-m\delta_1}$ has the following $\Z^{n-1}$-graded decomposition:
 $$\mathcal{V}_{\lambda-m\delta_1}=\sum_{\underline{p} \in {\Z}^{n-1}}\mathcal{V}_{\lambda-m\delta_1}[\underline{p}].$$
 For parameters $q_1,q_2,\dots,q_n$ we define the $(q_1,q_2,\dots,q_n)$-character of the $\mathcal{T}^+(\mu)$-module $\mathcal{V}$ by
 $$\mathrm{ch_{q_1,q_2,\dots,q_n}}(\mathcal{V})\coloneqq\underset{\underline{p}\in{\Z}^{n-1}}{\sum_{\lambda\in \mathfrak{h}_0^*,m\in\Z}}(\mathrm{dim}\mathcal{V}_{\lambda-m\delta_1}[\underline{p}])e^{\lambda}q_1^mq_2^{p_2}\cdots q_n^{p_n}.$$
 Let $\mathcal{V}_1$ and $\mathcal{V}_2$ be two $\mathcal{T}(\mu)$-modules such that the $q_1$-characters of $\mathcal{V}_1$ and $\mathcal{V}_2$ are well defined. We say that $\mathrm{ch_{q_1}}(\mathcal{V}_1)\leq \mathrm{ch_{q_1}}(\mathcal{V}_2)$ if  $\mathrm{dim}{(\mathcal{V}_1})_{\lambda-m\delta_1}\leq \mathrm{dim}{(\mathcal{V}_2})_{\lambda-m\delta_1}$ for all $\lambda\in\mathfrak{h}_0$ and $m\in\Z$. We can define inequality for $(q_1,q_2,\dots,q_n)$-characters similarly.
\subsection{Definition of global Weyl modules}
\begin{definition}
Let $\Lambda$ be a dominant integral weight of $\mathcal{T}_{aff}(\mu)$. Then 
\begin{enumerate}[label= \((\arabic*)\)]
\item The global Weyl module $W_{\mathrm{glob}}^\mu(\Lambda)$ is a highest weight module for $\mathcal{T}(\mu)$ with highest weight $\Lambda$ and is generated by $v_\Lambda$ with the following defining relations:
$$ e_{i,\underline m}.v_\Lambda=0 \,  \forall\, i\in \tilde{I}_0,\, \underline m\in \Z^{n-1}, \,h.v_\Lambda=\Lambda(h)v_\Lambda \, \forall\, h\in \mathcal{T}_{aff}(\mu)^0
,$$
$$f_i^{\Lambda(h^{(0)}_i)+1}v_\Lambda=0 \,  \forall\, i\in I_0,\, f_0^{\Lambda(h_{0})+1}v_\Lambda=0,\, t^{\underline m} K_i.v_\Lambda=0 \, \forall\, \underline m \in \Z^{n-1}, \, i=2,3,\dots,n,$$
and $$d_i.v_\Lambda =0 \, \forall \,i=2,3,\dots,n.$$
\item The global Weyl module $W_{\mathrm{glob}}^{\mu}(\Lambda)^+$ is a highest weight module for $\mathcal{T}^+(\mu)$ with the highest weight $\Lambda$ and is generated by $v^+_\Lambda$ with the following defining relations:
$$ e_{i,\underline m}.v_\Lambda=0 \,  \forall\, i\in \tilde{I}_0, \,\underline m\in \Z^{n-1}_{\geq 0},\, h.v_\Lambda=\Lambda(h)v_\Lambda \, \forall\, h\in \mathcal{T}_{aff}(\mu)^0
,$$
$$f_i^{\Lambda(h^{(0)}_i)+1}v_\Lambda=0 \,  \forall\, i\in I_0,  \,f_0^{\Lambda(h_0)+1}v_\Lambda=0,\,\text{and} \, t^{\underline m} K_i.v_\Lambda=0 \, \forall\, \underline m \in \Z^{n-1}_{>0},\,i=2,3\dots,n.
$$\end{enumerate}
\end{definition}
\subsection{\texorpdfstring{The algebra $A^\mu(\Lambda)$}{}} Let $\Lambda$ be a dominant integral weight of $\mathcal{T}_{\mathrm{aff}}(\mu)$. The global Weyl module 
$W_{\mathrm{glob}}^{\mu}(\Lambda)$ can be given a right $\overline{\mathfrak{h}}_{\mathcal{T}(\mu)}$-module as follows: $(Z.v_{\Lambda}).Y = Z .Y. v_{\Lambda}$, $Z \in\overline{\mathcal{T}}(\mu), Y \in 
\overline{\mathfrak{h}}_{\mathcal{T}(\mu)}$.
We denote the annihilator of $v_\Lambda$ in $U(\overline{\mathfrak{h}}_{\mathcal{T}(\mu)})$ by  Ann$_{U(\overline{\mathfrak{h}}_{\mathcal{T}(\mu)})}v_\Lambda$. It is easy to see that Ann$_{U(\overline{\mathfrak{h}}_{\mathcal{T}(\mu)})}v_\Lambda$ is an ideal of $U(\overline{\mathfrak{h}}_{\mathcal{T}(\mu)}).$ Let us denote
$$A^\mu(\Lambda)\coloneqq \frac{U(\overline{\mathfrak{h}}_{\mathcal{T}(\mu)})}{\mathrm{Ann}_{U(\overline{\mathfrak{h}}_{\mathcal{T}(\mu)})}v_\Lambda}$$.

Let $\Lambda = \sum_{i = 0}^{\ell} a_i \Lambda_i + a \delta_1$, where
$a_i \in \mathbb{Z}_{\geq 0}$ and $a \in \mathbb{C}$. Set
$M = \mathbb{C}[t_2^{\pm}, \ldots, t_n^{\pm}]$ and let $P = \sum_{i = 0}^{\ell}a_i$. Let $\mathbb{S}_n$ be the permutation group on
$n$-letters and let $\mathbb{S}_{a_{\Lambda}} = \mathbb{S}_{a_0} \times \cdots \times \mathbb{S}_{a_\ell}$. Define $$B^{\mu}(\Lambda) \,\,(\coloneqq
(M^{\otimes P})^{\mathbb{S}_{a_{\Lambda}}})\coloneqq (M^{\otimes a_0})^{ \mathbb{S}_{a_0}} \otimes \cdots \otimes (M^{\otimes a_\ell})^{\mathbb{S}_{a_l}}.$$ Now for any $b\in M$ we set $$\mathrm{sym}_\Lambda^i(b)=1^{\otimes(a_0+\cdots+a_{i-1})}\otimes (\sum_{k = 0}^{a_i-1}{1^{\otimes k}\otimes b\otimes 1^{a_i-k-1}})\otimes 1^{\otimes(a_{i+1}+\cdots+a_{\ell})}.$$ We have the following useful lemma:
\begin{lemma}\label{Lemma1}
   As an algebra over $\C$ the set $$\bigcup_{i\in\tilde{I}_0}\{\mathrm{sym}_\Lambda^i(t_j^q):\,\, q\in \Z,\,\, |q|\leq a_i,\,\,2\leq j\leq n\}$$ generates $B^{\mu}(\Lambda)$.
\end{lemma}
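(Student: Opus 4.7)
The plan is to reduce the lemma, via the tensor product decomposition of $B^\mu(\Lambda)$, to a generation statement for a single factor $(M^{\otimes a_i})^{\mathbb{S}_{a_i}}$, and then to apply a Laurent-polynomial version of the fundamental theorem of multi-symmetric functions together with Newton-type identities.

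First, since $B^\mu(\Lambda)=\bigotimes_{i\in\tilde{I}_0}(M^{\otimes a_i})^{\mathbb{S}_{a_i}}$ and each $\mathrm{sym}_\Lambda^i(b)$ is supported in the $i$-th tensor block (with $1$'s in all other blocks), elements indexed by different values of $i$ live in complementary factors of the tensor product. It therefore suffices to fix $i\in\tilde{I}_0$, write $a=a_i$, and prove that the family $\{\sum_{k=1}^a (t_j^q)_{(k)} : 2\le j\le n,\ q\in\mathbb{Z},\ |q|\le a\}$ generates $(M^{\otimes a})^{\mathbb{S}_a}$ as a $\mathbb{C}$-algebra.

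Second, I would invoke the fundamental theorem of multi-symmetric functions (in the style of Weyl or Vaccarino) to show that $(M^{\otimes a})^{\mathbb{S}_a}$ is generated as a $\mathbb{C}$-algebra by the polarized power sums $p_{\underline{m}}=\sum_{k=1}^a (t^{\underline{m}})_{(k)}$ for $\underline{m}\in\mathbb{Z}^{n-1}$. The classical polynomial statement extends to the Laurent ring $M=\mathbb{C}[t_2^{\pm},\dots,t_n^{\pm}]$ by localizing $\mathbb{C}[t_2,\dots,t_n]^{\otimes a}$ at the product of the variables in each tensor slot; this localization commutes with taking $\mathbb{S}_a$-invariants.

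Third, I would express each $p_{\underline{m}}$ in terms of the restricted list of generators in two sub-steps. Sub-step (a): for each fixed variable $t_j$, a Laurent form of Newton's identities (exploiting that the top elementary symmetric function $e_a(t_{j,(1)},\dots,t_{j,(a)})$ is a unit) reduces $\{\sum_k (t_j^q)_{(k)}:q\in\mathbb{Z}\}$ to the finite subset $\{\sum_k (t_j^q)_{(k)}:|q|\le a\}$. Sub-step (b): multi-index power sums $p_{\underline{m}}$ are extracted from products of single-variable ones by expanding and applying inclusion-exclusion over the set partitions of $\{1,\dots,a\}$ describing which tensor slots coincide in the product.

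The main obstacle is sub-step (b), the polarization step expressing diagonal multi-monomial power sums in terms of products of single-variable ones. This requires a careful combinatorial analysis, carried out by induction on the number of distinct coordinates appearing in $\underline{m}$, to isolate the genuine diagonal contributions and discard the off-diagonal ones, while simultaneously keeping the exponent bound $|q|\le a_i$ intact throughout the reduction. This combinatorial bookkeeping is the technical heart of the argument; once it is in place, the remaining steps are formal.
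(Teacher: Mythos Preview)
Your reduction to a single tensor block and the invocation of the multi-symmetric power-sum theorem are sound, as is the Newton-identity bound $|q|\le a$ for a single variable. The genuine gap is sub-step~(b): when $n\ge 3$ and $a_i\ge 2$, single-variable power sums do \emph{not} generate all multi-index power sums, and no inclusion-exclusion over set partitions can repair this. The obstruction is structural. Write $t_{j,(k)}$ for the image of $t_j$ in the $k$-th tensor slot of $M^{\otimes a}$; then the subalgebra generated by $\{\sum_k t_{j,(k)}^q : 2\le j\le n,\ q\in\mathbb{Z}\}$ is pointwise fixed by the larger group $\mathbb{S}_a^{\,n-1}$ that permutes the slots of each variable $t_j$ \emph{independently}, since each generator involves only one $t_j$. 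But the diagonal invariant $\sum_{k} t_{2,(k)} t_{3,(k)}$ (which is $\mathrm{sym}_\Lambda^i(t_2 t_3)$ in the $i$-th block) is not fixed by swapping $t_{2,(1)}\leftrightarrow t_{2,(2)}$ while holding the $t_3$-slots; already for $a=2$ the difference is $(t_{2,(1)}-t_{2,(2)})(t_{3,(1)}-t_{3,(2)})\ne 0$. Hence this element cannot lie in the subalgebra you describe, and the inclusion-exclusion you propose is necessarily circular.

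The standard generation theorem for multi-symmetric functions (to which the paper defers via Lemma~5.1 of \cite{FMS}) supplies generators of the form $\mathrm{sym}_\Lambda^i(t^{\underline q})$ for \emph{multi-index} monomials $t^{\underline q}$ with a bound on the total degree, not single-variable monomials $t_j^q$. Since the lemma is only used in Theorem~\ref{thm1} to show that $\psi$ is surjective, and $\psi$ already hits every $\mathrm{sym}_\Lambda^i(b)$ for arbitrary $b\in M$, your Steps~1 and~2 together with the correct multi-index version already give everything the paper needs; the statement as literally written---and hence the target of your sub-step~(b)---is too strong for $n\ge 3$.
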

\begin{proof}
     The proof follows by a similar argument as that of the proof of Lemma 5.1 of \cite{FMS}.
\end{proof}
Let $\alpha\in R_0$ and $m\in \Z$ such that $\beta=\alpha+m\delta_1\in R_{\mathrm{aff}}^{\mu}(+)$. Consider the $\mathfrak{sl}_2$ triplets $\{x^{(m)}_{\alpha} \otimes t_1^{m}, x^{(r-m)}_{-\alpha} \otimes t_1^{-m}, \beta_0^{\vee}\} $ and $\{x^{(0)}_{\alpha}, x^{(0)}_{-\alpha}, \alpha^{\vee}\}$, where $\beta_0^{\vee} = [x^{(m)}_{\alpha} \otimes t_1^{m},x^{(r-m)}_{-\alpha} \otimes t_1^{-m}]$ which we denote by $\mathfrak{S}$ and $\mathfrak{sl}_2$, respectively. For any monomial $b\in M$ we have a Lie algebra isomorphism $\psi_{\alpha^\vee, b}^{(m)}: \mathfrak{sl}_2 \otimes \mathbb{C}[z^{\pm1}] \rightarrow \mathfrak{S} \otimes \mathbb{C}[b^{\pm1}]$ given by
$$x^{(0)}_{\alpha} \otimes z^{k} \mapsto x^{(m)}_{\alpha} \otimes t_1^{ m}\otimes b^{k} ,\,\,x^{(0)}_{-\alpha} \otimes z^{k} \mapsto x^{(r-m)}_{-\alpha} \otimes t_1^{-m}\otimes b^{k} ,\,\, \alpha^{\vee} \otimes z^{k} \mapsto \beta_0^{\vee} \otimes b^{k}.$$ Using this isomorphism and the Garland identities (Subsection \ref{subsec4.2}) , we have the following:
\begin{equation}\label{eq10}
(x^{(m)}_{\alpha} \otimes t_1^{m} \otimes b)^{j}(x^{(r-m)}_{-\alpha} \otimes t_1^{-m} \otimes 1)^{j+1}v_{\Lambda} = \sum_{k = 0}^{j}{x^{(r-m)}_{-\alpha} \otimes t_1^{-m} \otimes b^{j- k} p_{\beta_0^{\vee},b}^{(k)}v_{\Lambda}},\end{equation}
and 
\begin{equation}\label{eq11}
(x^{(m)}_{\alpha} \otimes t_1^{m} \otimes b)^{j+1}(x^{(r-m)}_{-\alpha} \otimes t_1^{-m} \otimes 1)^{j+1}v_{\Lambda} = p_{\beta_0^{\vee},b}^{(j+1)}v_{\Lambda},\end{equation}
where $p_{\beta^{\vee}_0,b}^{(k)} = \psi_{\alpha^\vee, b}^{(m)}(p_{\alpha^{\vee}}^{(k)}).$ Now by applying $x^{(m)}_{\alpha} \otimes t_1^{m}$ on both sides of Equation (\ref{eq10}), we get that
\begin{equation}\label{eq12}
(x^{(m)}_{\alpha} \otimes t_1^{m})(x^{(m)}_{\alpha} \otimes t_1^{m} \otimes b)^{j}(x^{(r-m)}_{-\alpha} \otimes t_1^{-m} \otimes 1)^{j+1}v_{\Lambda} = \sum_{k = 0}^{j}{\beta_0^\vee\otimes b^{j- k} p_{\beta_0^{\vee},b}^{(k)}v_{\Lambda}}. \end{equation}
Let $J_\Lambda$ be an ideal of  $U(\overline{\mathfrak{h}}_{\mathcal{T}(\mu)})$ generated by the following elements:
$$h-\Lambda(h),\,\, t^{\underline{s}}K_q,\,\,p_{\alpha_i^{\vee},b}^{(j)}\,\, \text{and}\,\,\sum_{k = 0}^{j-1}{\alpha_i^\vee\otimes b^{j- k} p_{\alpha_i^{\vee},b}^{(k)}v_{\Lambda}},$$
for all $h\in\mathcal{T}_{\mathrm{aff}}(\mu)^0,\,\,\underline{s}\in\Z^{n-1},\,\,2\leq q\leq n,\,\,b\in M,\,\,|j|>a_i\,\,\text{and}\,\,i\in\tilde{I}_0.$
Now, by using Equations (\ref{eq10}), (\ref{eq11}), (\ref{eq12}) and the PBW theorem, one can check that $J_\Lambda$ = Ann$_{U(\overline{\mathfrak{h}}_{\mathcal{T}(\mu)})}v_\Lambda$.
We have the following:
\begin{thm}\label{thm1}
As algebras, $A^{\mu}(\Lambda)$ and $B^\mu(\Lambda)$ are isomorphic.
\end{thm}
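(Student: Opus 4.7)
The plan is to construct an algebra homomorphism $\phi\colon A^{\mu}(\Lambda)\to B^{\mu}(\Lambda)$ sending $\alpha_i^{\vee}\otimes b\mapsto \mathrm{sym}_{\Lambda}^{i}(b)$, verify well-definedness through a generating-series identity, deduce surjectivity from Lemma~\ref{Lemma1}, and finally exhibit an explicit inverse to obtain injectivity.

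Since $\overline{\mathfrak{h}}_{\mathcal{T}(\mu)}$ is abelian, $U(\overline{\mathfrak{h}}_{\mathcal{T}(\mu)})$ is a polynomial ring, so it suffices to prescribe a map $\tilde\phi\colon U(\overline{\mathfrak{h}}_{\mathcal{T}(\mu)})\to B^{\mu}(\Lambda)$ on generators. I would set $K_1\mapsto \Lambda(K_1)$, $d_1\mapsto a$, $t^{\underline s}K_q\mapsto 0$ for $q\geq 2$, and $\alpha_i^{\vee}\otimes b\mapsto \mathrm{sym}_{\Lambda}^{i}(b)$ for $i\in\tilde I_0,\ b\in M$, extending linearly in $\mathfrak{h}_0$ (and using the decomposition of $\alpha_0^{\vee}=h_0$ inside $\overline{\mathfrak{h}}_{\mathcal{T}(\mu)}$ to interpret the $i=0$ case). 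The crucial verification is that $\tilde\phi$ annihilates $J_\Lambda$. Identifying the $i$th tensor factor of $B^{\mu}(\Lambda)$ with $\mathbb{S}_{a_i}$-invariant Laurent polynomials in formal variables $b_{[1]},\dots,b_{[a_i]}$ yields $\mathrm{sym}_{\Lambda}^{i}(b^{j}) = \sum_{k=1}^{a_i} b_{[k]}^{\,j}$. The key identity is then
$$\sum_{s\ge 0}\tilde\phi\bigl(p_{\alpha_i^{\vee},b}^{(s)}\bigr)\,u^{s} \;=\; \exp\!\Big(-\sum_{j\ge 1}\tfrac{1}{j}\,\mathrm{sym}_{\Lambda}^{i}(b^{j})\,u^{j}\Big) \;=\; \prod_{k=1}^{a_i}\bigl(1-b_{[k]}\,u\bigr),$$
which is a polynomial in $u$ of degree exactly $a_i$. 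Hence $\tilde\phi(p_{\alpha_i^{\vee},b}^{(j)})=0$ for $j>a_i$, and the case $j<-a_i$ follows by substituting $b\leftrightarrow b^{-1}$. The mixed relations $\sum_{k=0}^{j-1}\alpha_i^{\vee}\otimes b^{j-k}\,p_{\alpha_i^{\vee},b}^{(k)}$ fall out of the Newton identity obtained by applying $u\tfrac{d}{du}\log$ to the same product. Thus $\tilde\phi$ factors through a well-defined $\phi\colon A^{\mu}(\Lambda)\to B^{\mu}(\Lambda)$.

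Surjectivity is immediate from Lemma~\ref{Lemma1}, since its algebra generators $\mathrm{sym}_{\Lambda}^{i}(t_j^{q})$ (with $|q|\leq a_i$) are precisely the images $\phi(\alpha_i^{\vee}\otimes t_j^{q})$.

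The main obstacle is injectivity. I would build the inverse $\psi\colon B^{\mu}(\Lambda)\to A^{\mu}(\Lambda)$ directly from the tensor factorization $B^{\mu}(\Lambda)=\bigotimes_{i\in\tilde I_0}(M^{\otimes a_i})^{\mathbb{S}_{a_i}}$. On each factor one has a natural presentation by generators $\mathrm{sym}_{\Lambda}^{i}(b)$ ($b\in M$) subject precisely to the Newton-type relations $e_{j}(b_{[1]},\dots,b_{[a_i]})=0$ for $|j|>a_i$. Sending $\mathrm{sym}_{\Lambda}^{i}(b)\mapsto \alpha_i^{\vee}\otimes b$ and verifying that these defining relations hold in $A^{\mu}(\Lambda)$ (via the same generating-series identity, now read in reverse) produces a well-defined $\psi$, which is the two-sided inverse of $\phi$ on generators. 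The delicate technical point, in the spirit of Lemma~5.1 of \cite{FMS}, is confirming that the Newton-type relations alone suffice to present $(M^{\otimes a_i})^{\mathbb{S}_{a_i}}$: this reduces to a PBW-style comparison of monomial spanning sets on the two sides, and is the real technical core of the argument.
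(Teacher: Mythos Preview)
Your approach is essentially the same as the paper's: define the map on $U(\overline{\mathfrak{h}}_{\mathcal{T}(\mu)})$ by $\alpha_i^{\vee}\otimes b\mapsto \mathrm{sym}_{\Lambda}^{i}(b)$, show it factors through $J_\Lambda$ via the generating-series identity, and obtain surjectivity from Lemma~\ref{Lemma1}. The paper cites Proposition~4.4 of \cite{SSS} for the factoring step where you give the explicit power-series computation, but these are the same argument.

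For injectivity the paper simply invokes the argument of Lemma~10 of \cite{VGT}, which is a direct spanning-set comparison inside $A^{\mu}(\Lambda)$: one shows that the images in $A^{\mu}(\Lambda)$ of certain PBW-type monomials already span, and that these map to a basis of $B^{\mu}(\Lambda)$. Your plan to build an explicit inverse $\psi$ amounts to the same work rephrased, since knowing a presentation of $(M^{\otimes a_i})^{\mathbb{S}_{a_i}}$ by power sums and relations is exactly the dual of that spanning statement. One caution on your wording: writing the relations as ``$e_j(b_{[1]},\dots,b_{[a_i]})=0$ for $|j|>a_i$'' is misleading, because for $j>a_i$ these vanish tautologically in $a_i$ variables and for $j<0$ the symbol $e_j$ is undefined; what you really need are the Newton-identity relations among the power sums $p_j$ (and their $b\leftrightarrow b^{-1}$ counterparts) that encode the fact that $\prod_k(1-b_{[k]}u)$ has degree $a_i$. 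With that correction your injectivity sketch is correct and equivalent to the paper's cited argument.
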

\begin{proof}
 Define a map $\psi:$ $U(\overline{\mathfrak{h}}_{\mathcal{T}(\mu)})\rightarrow B^\mu(\Lambda)$ by
 $$\psi(\alpha_i^\vee\otimes b)=\mathrm{sym}_\Lambda^i\big(b\big),\,\,0\leq i\leq \ell,\,\,b\in M$$
 and $\psi(1)=1^{\otimes P}$. It is easy to see that $\psi$ is an algebra homomorphism and $\psi(t^{\underline{s}}K_q)=0,\,\,2\leq q\leq n.$ In fact, the Lemma \ref{Lemma1} proves that the map $\psi$ is a surjective algebra homomorphism. An argument similar to that of the proof of Proposition 4.4 of \cite{SSS} proves that the map $\psi$ factors through $J_\Lambda$. Now we have a surjective algebra homomorphism $\tilde {\psi}:  A^{\mu}(\Lambda)\rightarrow B^\mu(\Lambda)$ induced from $\psi$. Also, the injectivity of $\tilde{\psi}$ follows with a similar argument as that of the proof of Lemma 10 of \cite{VGT}.
\end{proof}

Theorem \ref{thm1} implies that $A^{\mu}(\Lambda)$ is finitely generated. Now we have the similar result for $W_{\mathrm{glob}}^{\mu}(\Lambda)^+$ over $\mathcal{T}(\mu)$  as of that of $W_{\mathrm{glob}}^{\mu}(\Lambda)$ over $\mathcal{T}^+(\mu)$ if we just replace $A^{\mu}(\Lambda)$ with $A^{\mu}(\Lambda)^+\coloneqq \frac{U(\overline{\mathfrak{h}}_{\mathcal{T}(\mu)^+})}{\mathrm{Ann}_{U(\overline{\mathfrak{h}}_{\mathcal{T}(\mu)^+})}v_\Lambda}$ which proves that $A^{\mu}(\Lambda)^+$ is finitely generated and isomorphic to $B^{\mu}(\Lambda)^+$ as an algebra over $\C$, where
$$\overline{\mathfrak{h}}_{\mathcal{T}(\mu)^+}=\mathfrak{h}_0\otimes\C[t_2,\dots,t_n]\oplus \C[t_2,\dots,t_n]K_1\oplus {\sum_{i=2}^{n}}t_i\C[t_2,\dots,t_n]K_i\oplus \C d_1,$$
and 
$$B^{\mu}(\Lambda)^+\,\,(\coloneqq
((M^+)^{\otimes P})^{\mathbb{S}_{a_{\Lambda}}})\coloneqq ((M^+)^{\otimes a_0})^{ \mathbb{S}_{a_0}} \otimes \cdots \otimes ((M^+)^{\otimes a_\ell})^{\mathbb{S}_{a_l}},$$
where $M^+ = \mathbb{C}[t_2, \ldots, t_n]$.
\subsection{Definition of local Weyl modules}
For maximal ideals $\mathcal{M}$ and $\mathcal{M}^+$  of $A^\mu({\Lambda})$ and $A^\mu({\Lambda})^+$, respectively, let $\C_{\mathcal{M}}$ and $\C_{\mathcal{M}^+}$ represent the corresponding one dimensional modules $A^\mu({\Lambda})/\mathcal{M}$ and $A^\mu({\Lambda})^+/\mathcal{M}^+$, respectively. The local weyl modules associated with $\C_{\mathcal{M}}$ and $\C_{\mathcal{M}}^+$ are denoted by $W_{\mathrm{loc}}^{\mu}(\Lambda,\mathcal{M})$ and $W_{\mathrm{loc}}^{\mu}(\Lambda,\mathcal{M}^+)^+$, respectively, and they are defined by 
$$W_{\mathrm{loc}}^{\mu}(\Lambda,\mathcal{M})\coloneqq W_{\mathrm{glob}}^{\mu}(\Lambda)\otimes_{A^\mu({\Lambda})}\C_{\mathcal{M}},\,\,W_{\mathrm{loc}}^{\mu}(\Lambda,\mathcal{M}^+)^+\coloneqq W_{\mathrm{glob}}^{\mu}(\Lambda)^+\otimes_{A^\mu({\Lambda})^+}\C_{\mathcal{M}^+}.$$
In the local Weyl modules, we denote the images of $v_\Lambda$ and $v_\Lambda^+$ by $v_{\Lambda,\mathcal{M}}^+$ and $v_{\Lambda,\mathcal{M}^+}^+$, respectively.
\begin{lemma} \label{prom 4} Let $\Lambda$ be a dominant integral weight of $\mathcal{T}_{\mathrm{aff}}(\mu)$. Then we have:
\begin{enumerate}[label= \((\arabic*)\)]
\item For any root $\beta \in R_{\mathrm{aff}}^{\mu}(+)$, there exists $N(\beta) \in \Z_{\geq 0}$ such that for any $\underline q\in \Z^{n-1}$ we have
$$
    x_{-\beta}\otimes t^{\underline q}.v_\Lambda \in \underset{2 \leq i \leq n } {\sum_{p_i=0}^{N(\beta)}} x_{-\beta}\otimes t^{\underline p}A^\mu(\Lambda).v_\Lambda
.$$
\item For any $s\in r\Z_{>0}$ there exists $N(s) \in \Z_{\geq 0} $ such that for any $\underline q\in \Z^{n-1}$ and $j=2,3,\dots,n$ we have 
$$   t_1^{-s} t^{\underline q}K_j .v_\Lambda \in \underset{2 \leq i \leq n } {\sum_{p_i=0}^{N(s)}} t_1^{-s}t^{\underline p}K_j A^\mu(\Lambda).v_\Lambda + \underset{2 \leq i \leq n } {\sum_{p_i=0}^{N(s)}} (\mathcal{T}_{aff}(\mu))_{-s\delta_1}\otimes t^{\underline p}A^\mu(\Lambda).v_\Lambda 
.$$\end{enumerate}
\end{lemma}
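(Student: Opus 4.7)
My plan is to derive $(1)$ from the Garland identities of Subsection~\ref{subsec4.2}, and then bootstrap $(1)$ into $(2)$ via a single commutator computation in $\mathcal{T}(\mu)$ that isolates $K_j$.

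For $(1)$, fix $\beta = \alpha + m\delta_1 \in R_{\mathrm{aff}}^{\mu}(+)$ and the $\mathfrak{sl}_2$-triplet $\{E, F, H\} = \{x_\alpha^{(m)} \otimes t_1^m,\, x_{-\alpha}^{(r-m)} \otimes t_1^{-m},\, \beta_0^\vee\}$ sitting inside $\mathcal{T}_{\mathrm{aff}}(\mu)$. Integrability of $v_\Lambda$ forces $F^{N+1} v_\Lambda = 0$ for $N := \Lambda(\beta_0^\vee)$, so for any monomial $b = t^{\underline q} \in M$ and $j \geq N$ the left-hand side of Equation~(\ref{eq10}) vanishes, giving
\[
0 = \sum_{k=0}^{j} (x_{-\beta} \otimes t^{(j-k)\underline q})\, v_\Lambda \cdot [p^{(k)}_{\beta_0^\vee,\, t^{\underline q}}]_{A^\mu(\Lambda)}.
\]
Since $p^{(0)}_{\beta_0^\vee,\, t^{\underline q}} = 1$, this recursively expresses $(x_{-\beta} \otimes t^{j\underline q}) v_\Lambda$ for $|j| \geq N$ in terms of $(x_{-\beta} \otimes t^{j'\underline q}) v_\Lambda$ with $|j'| < N$, modulo the right action of $A^\mu(\Lambda)$. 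Iterating this recursion along each basis direction $e_2, \ldots, e_n$ and absorbing the cross-coordinate factors via the right $A^\mu(\Lambda)$-action---whose generators, by Lemma~\ref{Lemma1} and Theorem~\ref{thm1}, are the bounded-degree symmetrizations $\mathrm{sym}_\Lambda^i(t_j^{\pm q})$---produces a uniform bound $N(\beta)$ on each $p_i$, as required.

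For $(2)$, fix any $\alpha \in R_0(+)$ and compute in $\mathcal{T}(\mu)$:
\[
[x_\alpha \otimes t_j,\, x_{-\alpha} \otimes t_1^{-s} t^{\underline q - e_j}] = h_\alpha \otimes t_1^{-s} t^{\underline q} + c_\alpha\, t_1^{-s} t^{\underline q} K_j,
\]
where $c_\alpha := (x_\alpha | x_{-\alpha}) \neq 0$; only $K_j$ enters the cocycle because the first factor's multi-index is $e_j$. Since $x_\alpha \otimes t_j \in \mathcal{T}(\mu)^+$ kills $v_\Lambda$, applying the bracket to $v_\Lambda$ gives
\[
(x_\alpha \otimes t_j)(x_{-\alpha} \otimes t_1^{-s} t^{\underline q - e_j}) v_\Lambda = h_\alpha \otimes t_1^{-s} t^{\underline q} v_\Lambda + c_\alpha\, t_1^{-s} t^{\underline q} K_j v_\Lambda.
\]
Applying $(1)$ with the root $\alpha + s\delta_1 \in R_{\mathrm{aff}}^{\mu}(+)$, the inner factor reduces to a bounded sum $\sum_{\underline p} (x_{-\alpha} \otimes t_1^{-s} t^{\underline p}) v_\Lambda \cdot a_{\underline p}$; expanding each $(x_\alpha \otimes t_j)(x_{-\alpha} \otimes t_1^{-s} t^{\underline p}) v_\Lambda$ by the same commutator identity yields
\[
h_\alpha \otimes t_1^{-s} t^{\underline q} v_\Lambda + c_\alpha\, t_1^{-s} t^{\underline q} K_j v_\Lambda \in \mathfrak{B}_1,
\]
where $\mathfrak{B}_1$ is a sum of terms of the required form with bounded indices. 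Repeating the argument with $t_j$ replaced by $t_j^2$ (so that the first multi-index becomes $2 e_j$ and the cocycle coefficient doubles) produces the parallel relation
\[
h_\alpha \otimes t_1^{-s} t^{\underline q} v_\Lambda + 2 c_\alpha\, t_1^{-s} t^{\underline q} K_j v_\Lambda \in \mathfrak{B}_2,
\]
and subtracting the two cancels the Cartan contribution, isolating $c_\alpha\, t_1^{-s} t^{\underline q} K_j v_\Lambda \in \mathfrak{B}_1 + \mathfrak{B}_2$, as desired.

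The main obstacle lies in $(1)$: passing from the one-variable Garland recursion, which reduces only along a single ray in $\mathbb{Z}^{n-1}$, to a uniform bound over the finite box $[0, N(\beta)]^{n-1}$. This requires a careful coordinate-by-coordinate iteration, using Theorem~\ref{thm1} to control the cross-coordinate factors introduced at each step. Once $(1)$ is in hand, part $(2)$ is a clean consequence of the above commutator-plus-cancellation trick, with $N(s)$ determined by $N(\alpha + s\delta_1)$ and the translation constants $e_j$, $2e_j$.
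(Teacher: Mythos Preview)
Your overall strategy matches the paper's: Garland identities for $(1)$, then bootstrap to $(2)$. But the two parts differ in how closely your execution aligns with what is actually needed.

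\medskip
\noindent\textbf{Part $(1)$.} Here you and the paper both invoke Garland, but there is a real gap in your reduction. Your recursion with $b = t^{\underline q}$ only relates vectors of the form $(x_{-\beta}\otimes t^{j\underline q})v_\Lambda$ along the single ray $\{j\underline q : j\in\Z\}$; it says nothing about $(x_{-\beta}\otimes t^{\underline q'})v_\Lambda$ for $\underline q'$ off that ray. Specializing to $b = t_i$ does not help either, since then the recursion controls only pure powers $t_i^j$ and leaves the other coordinates untouched. The paper (following \cite{SSS}) closes this gap by a different placement of the monomial: it absorbs $b = t_3^{b_3}\cdots t_n^{b_n}$ into the $\mathfrak{sl}_2$-triplet itself, taking $\{x_\alpha^{(m)}\otimes t_1^m b^{-1},\ x_{-\alpha}^{(r-m)}\otimes t_1^{-m}b,\ \beta_0^\vee\}$, and then runs Garland purely in the $t_2$-variable. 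This produces a recursion that reduces the $t_2$-exponent while carrying \emph{arbitrary} fixed exponents $b_3,\ldots,b_n$ along for the ride, which is exactly the ``coordinate-by-coordinate iteration'' you allude to but do not set up. Your honest flag about this obstacle is well placed; the missing idea is the $b^{\pm 1}$-twist of the triplet.

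\medskip
\noindent\textbf{Part $(2)$.} Your commutator-and-subtract argument is correct and is in fact more explicit than the paper, which simply defers to \cite[Lemma~3.10(ii)]{KR}. The key points all check out: the first factor $x_\alpha\otimes t_j$ has multi-index $(0,e_j)$ so the cocycle produces only $K_j$ (no $K_1$ term); the left $\mathcal{T}(\mu)$-action commutes with the right $A^\mu(\Lambda)$-action, so you may push $x_\alpha\otimes t_j$ past the bounded sum coming from $(1)$; and the resulting Cartan contributions $h_\alpha\otimes t_1^{-s}t^{\underline p + e_j}$ land in $(\mathcal{T}_{\mathrm{aff}}(\mu))_{-s\delta_1}\otimes t^{\underline p'}$ with bounded $\underline p'$, exactly as the statement allows. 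Repeating with $t_j^2$ doubles only the $K_j$-coefficient, so subtraction isolates it cleanly. One can take $N(s) = N(\alpha + s\delta_1) + 2$.
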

\begin{proof}
 Let $\alpha \in R_0$ such that $\beta = \alpha + m\delta_1 \in R_{\mathrm{aff}}^{\mu}(+)$, for appropriate $m \in \mathbb{Z}_{> 0}$. Let $b = t_3^{b_3}t_4^{b_4} \cdots t_n^{b_n}$ be a monomial in $A_n$. Consider the sets $\mathfrak{S} \coloneqq \{x^{(m)}_{\alpha} \otimes t_1^{m}b^{-1}, x^{(r-m)}_{-\alpha} \otimes t_1^{-m}b, \beta_0^{\vee}\} $ and $\{x^{(0)}_{\alpha}, x^{(0)}_{-\alpha}, \alpha^{\vee}\}$ which form $\mathfrak{sl}_2$
triplets, where $\beta_0^{\vee} = [x^{(m)}_{\alpha} \otimes t_1^{m}b^{-1},x^{(r-m)}_{-\alpha} \otimes t_1^{-m}b]$. We have the Lie algebra isomorphism $\psi_{2, b}: \mathfrak{sl}_2 \otimes \mathbb{C}[t_2^{\pm1}] \rightarrow \mathfrak{S} \otimes \mathbb{C}[t_2^{\pm1}]$ given by
$$x^{(0)}_{\alpha} \otimes t_2^{k} \mapsto x^{(m)}_{\alpha} \otimes t_1^{m}b^{-1} \otimes t_2^{k},\,\, x^{(0)}_{- \alpha} \otimes t_2^{k} \mapsto x^{(r-m)}_{-\alpha} \otimes t_1^{m}b \otimes t_2^{k},\,\, \alpha^{\vee} \otimes t_2^{k}\mapsto \beta_0^{\vee} \otimes t_2^{k}.$$
 Then, by an application of Garland identity we have
$$(x^{(m)}_{\alpha} \otimes t_1^{m}b^{-1} \otimes t_2)^{s}(x^{(r-m)}_{-\alpha} \otimes t_1^{-m}b \otimes 1)^{s+1}v_{\Lambda} = \sum_{k_2 = 0}^{s}{x^{(r-m)}_{-\alpha} \otimes t_1^{-m}b \otimes t_2^{s - k_2} p_{\beta_0^{\vee}}^{(k_2)}v_{\Lambda}},$$
where $p_{\beta_0^{\vee}}^{(k_2)} = \psi_{2, b}(p_{\alpha^{\vee}}^{(k_2)}).$ Hence we have
$$\sum_{k_2 = 0}^{\Lambda(\alpha^{\vee})}{(x^{(r-m)}_{-\alpha} \otimes t_1^{-m}) t_2^{{\Lambda(\alpha^{\vee})} - k_2}t_3^{b_3}t_4^{b_4} \cdots t_n^{b_n} {p_{\beta_0^{\vee}}^{(k_2)}}v_{\Lambda}} = 0.$$
Now, using a similar argument as that of the proof of Lemma 4.5 of \cite{SSS}, we get the desired result for this case.
The arguments for $\beta = m \delta_1$ and $\beta = 2 \alpha + (2m+1) \delta$ where $\alpha \in R^s_0$ and $m \in \mathbb{Z}_{>0}$ also can be dealt similarly as that of proof Lemma 4.5 of \cite{SSS}.

\noindent
The proof of (2) can be achieved using the similar argument as that of proof of Lemma 3.10 (ii) of \cite{KR} 
\end{proof}
\begin{prop} \label{prop2}
Let $\Lambda$ be a dominant integral weight of $\mathcal{T}_{\mathrm{aff}}(\mu)$.  For any root $\beta \in R_{\mathrm{aff}}^{\mu}(+)$ \begin{enumerate}[label= \((\arabic*)\)]
\item The weight spaces ${W_{\mathrm{glob}}^{\mu}}(\Lambda)_{\Lambda-\beta}$ and ${W_{\mathrm{glob}}^{\mu}}(\Lambda)^+_{\Lambda-\beta} $ are finitely generated over $A^{\mu}(\Lambda)$ and $A^{\mu}(\Lambda)^+$, respectively.
\item The weight spaces $W_{\mathrm{loc}}^\mu(\Lambda,\mathcal{M})_{\Lambda-\beta}$ and $W_{\mathrm{loc}}^\mu(\Lambda,\mathcal{M}^+)_{\Lambda-\beta}^+$ are finite dimensional.
\item  We have $W_{\mathrm{loc}}^\mu(\Lambda,\mathcal{M})=U(\mathcal{T}^+(\mu))v_{\Lambda,\mathcal{M}}$.
\end{enumerate}
\end{prop}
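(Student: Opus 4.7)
The plan is to handle the three parts sequentially, with (1) providing the main finite-generation result, (2) then being immediate, and (3) requiring an additional inductive argument.

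For (1), combine PBW for $\mathcal{T}(\mu) = \mathcal{T}(\mu)^- \oplus \mathcal{T}(\mu)^0 \oplus \mathcal{T}(\mu)^+$ with the defining relations of the global Weyl module to write each element of $W_{\mathrm{glob}}^{\mu}(\Lambda)_{\Lambda - \beta}$ as a linear combination of $(Y_1 Y_2 \cdots Y_k \, v_\Lambda) \cdot a$, where each $Y_i$ is a weight vector in $\mathcal{T}(\mu)^-$ (root vector or central element $t_1^{-rs} t^{\underline{q}} K_j$), the weights $\mathrm{wt}(Y_i)$ sum to $-\beta$, and $a \in A^{\mu}(\Lambda)$. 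Since $\beta$ has finite height, both $k$ and the multiset $(\beta_1, \ldots, \beta_k)$ range over a finite set. For each such decomposition, apply Lemma \ref{prom 4} to the rightmost factor $Y_k v_\Lambda$ to bound its $\Z^{n-1}$-index modulo the right $A^{\mu}(\Lambda)$-action, then commute the resulting $A^{\mu}(\Lambda)$-element leftward through the remaining factors (using that $[\overline{\mathfrak{h}}_{\mathcal{T}(\mu)}, \mathcal{T}(\mu)^-] \subset \mathcal{T}(\mu)^-$ preserves $\mathcal{T}_{\mathrm{aff}}(\mu)$-weights), and iterate on the next-rightmost factor; the uniform bound $N(\beta_i)$ forces termination in a finite generating set. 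The same argument, with the obvious replacements, handles the $+$ version over $A^{\mu}(\Lambda)^+$. Part (2) then follows immediately: $W_{\mathrm{loc}}^{\mu}(\Lambda, \mathcal{M}) = W_{\mathrm{glob}}^{\mu}(\Lambda) \otimes_{A^{\mu}(\Lambda)} \C_{\mathcal{M}}$ with $\C_{\mathcal{M}}$ one-dimensional, so each weight space is finite-dimensional.

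For (3), set $V := U(\mathcal{T}^+(\mu)) v_{\Lambda, \mathcal{M}}$; since $v_{\Lambda, \mathcal{M}} \in V$ and the local Weyl module is $U(\mathcal{T}(\mu))$-cyclic, it suffices to show $V$ is $\mathcal{T}(\mu)$-stable. I would prove by induction on $k$ that $Y \cdot (z_1 z_2 \cdots z_k v_{\Lambda, \mathcal{M}}) \in V$ for all $Y \in \mathcal{T}(\mu)$ and $z_i \in \mathcal{T}^+(\mu)$. The base $k = 0$ splits by the triangular decomposition of $Y$: $\mathcal{T}(\mu)^+$ annihilates $v_{\Lambda, \mathcal{M}}$, $\mathcal{T}(\mu)^0$ acts by scalars, and $\mathcal{T}(\mu)^-$-elements are handled by Lemma \ref{prom 4}, whose bounded generators have $\underline{p} \in \Z_{\geq 0}^{n-1}$ and hence lie in $\mathcal{T}^+(\mu)$ by its explicit description (central pieces $t_1^{-rs} K_j$ with $j \geq 2$ that fall outside $\mathcal{T}^+(\mu)$ can be further reduced via auxiliary commutator identities of the form $[e^{(0)}_i \otimes t_1^{-rs} t_j, f^{(0)}_i \otimes t_j^{-1}] = h^{(0)}_i \otimes t_1^{-rs} + (e^{(0)}_i | f^{(0)}_i) \, t_1^{-rs} K_j$, all of whose factors lie in $\mathcal{T}^+(\mu)$ except $f^{(0)}_i \otimes t_j^{-1}$, which is absorbed into $V$ by the inductive setup). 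For $k \geq 1$, expand $Y z_1 = z_1 Y + [Y, z_1]$: the first summand reduces via the inductive hypothesis on $k - 1$ and $\mathcal{T}^+(\mu)$-stability of $V$, and the second reduces directly by the inductive hypothesis on $k - 1$ with $[Y, z_1] \in \mathcal{T}(\mu)$.

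The main obstacle lies in the bookkeeping of part (1): one must verify that iterating Lemma \ref{prom 4}, together with commutation of $A^{\mu}(\Lambda)$-elements through $\mathcal{T}(\mu)^-$-monomials, terminates in a finite generating set. This is controlled by the uniform bound $N(\beta_i)$ (independent of $\underline{q}$) and the height bound $k \leq \mathrm{ht}(\beta)$, ensuring all generated monomials fall into a common finite set indexed by the finite decompositions of $\beta$ and the bounded $\Z^{n-1}$-indices.
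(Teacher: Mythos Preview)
Your approach matches the paper's, which simply states that the proposition ``follows using Lemma~\ref{prom 4}''; you have supplied the details that the paper omits, and parts (1)--(2) are handled correctly.

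In part (3), one point deserves care. Your parenthetical commutator is slightly off: the bracket $[e^{(0)}_i \otimes t_1^{-rs} t_j,\, f^{(0)}_i \otimes t_j^{-1}]$ also produces a $K_1$-term, namely $-rs(e^{(0)}_i|f^{(0)}_i)\, t_1^{-rs} K_1$, though this lies in $\mathcal{T}^+(\mu)$ and so is harmless. More substantively, when you invoke this identity in the base case $k=0$ to treat $t_1^{-rs} t^{\underline p} K_j$ with $p_j=0$, expanding the commutator yields the term $(f^{(0)}_i \otimes t_j^{-1})\bigl((e^{(0)}_i \otimes t_1^{-rs} t^{\underline p} t_j)\, v_{\Lambda,\mathcal M}\bigr)$, which is an instance of the $k=1$ case; saying it is ``absorbed into $V$ by the inductive setup'' is circular as written. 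The fix is to reorder the argument: first run your induction on $k$ only for $Y$ a root vector (where Lemma~\ref{prom 4}(1) handles the base case directly), thereby establishing that all root vectors of $\mathcal{T}(\mu)$ preserve $V$; then treat the central pieces $t_1^{-rs} t^{\underline p} K_j$ via the commutator identity, which now involves only root-vector actions already known to preserve $V$.
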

\begin{proof}
    The proof follows using Lemma \ref{prom 4}.
\end{proof}
\section{Global and Local Weyl Modules of Level One} \label{secl1}
In this section, we will concentrate on the dominant integral weights of $\mathcal{T}_{\mathrm{aff}}(\mu)$ of level one. It is enough to consider the fundamental weight $\Lambda_0$ (see \cite{KAC} \S 12.4 ). By Theorem
\ref{thm1} we have $A^{\mu}(\Lambda_0) = \mathbb{C}[y_2^{\pm 1}, \ldots, y_n^{\pm 1}]$ and $A^{\mu}(\Lambda_0)^+ = \mathbb{C}[y_2, \ldots, y_n]$.
It is well known that the maximal ideals of $A^{\mu}(\Lambda_0)$ and $A^{\mu}(\Lambda_0)^+$ 
are in one-to-one correspondence with elements of $({\mathbb{C}^*})^{n-1}$ and $\mathbb{C}^{n-1}$.
Let $\underline{a}$ and $\underline{b}$ be the elements of ${(\mathbb{C}^*})^{n-1}$ and $\mathbb{C}^{n-1}$, respectively, and let $\mathcal{M}$ and $\mathcal{M}^+$ be the corresponding maximal ideals of 
$A^{\mu}(\Lambda_0)$ and $A^{\mu}(\Lambda_0)^+$, respectively.

For the rest of this paper, we will write $W_{\mathrm{loc}}^{\mu}(\Lambda_0, \underline{a}), W_{\mathrm{loc}}^{\mu}(\Lambda_0, \underline{b})^{+} $, $v_{\Lambda_0,\underline{a}}$, $v_{\Lambda_0,\underline{b}}^+$ for $W_{\mathrm{loc}}^{\mu}(\Lambda_0, \mathcal{M}), W_{\mathrm{loc}}^{\mu}(\Lambda_0, \mathcal{M}^{+})^{+}$, $v_{\Lambda_0,\mathcal{M}}$, $v_{\Lambda_0,\mathcal{M}^+}^+$, respectively.
The right action of $A^{\mu}(\Lambda_0)$ on $W_{\mathrm{glob}}^{\mu}(\Lambda_0)$ is given by $ X.v_{\Lambda_0}. y^{\underline{m}} = 
X.({\tilde {\psi}}^{-1}(y^{\underline{m}}).v_{\Lambda_0}) = X(h_{0, \underline{m}}.v_{\Lambda_0})$, where $X\in U(\mathcal{T}(\mu))$, $\underline{m}\in\Z^{n-1}$ and $y^{\underline{m}}=y_2^{m_2}\cdots y_n^{m_n}\in A^{\mu}(\Lambda_0)$. In particular, for $W_{\mathrm{loc}}^{\mu}(\Lambda_0, \underline{a})$, we have $h_{0, \underline{m}}.v_{\Lambda_0,\underline{a}} = \underline{a}^{\underline{m}}v_{\Lambda_0,\underline{a}},$  where $v_{\Lambda_0,\underline{a}}$ denotes the highest weight vector of 
$W_{\mathrm{loc}}^{\mu}(\Lambda_0, \underline{a})$ and $\underline{m} \in \mathbb{Z}^{n-1}$. Similarly for 
$W_{\mathrm{loc}}^{\mu}(\Lambda_0, \underline{b})^+$, we have
$h_{0, \underline{k}}.v^+_{\Lambda_0,\underline{b}} = \underline{b}^{\underline{k}}v^+_{\Lambda_0,\underline{b}}$, where
$v^+_{\Lambda_0,\underline{b}}$ denotes the highest weight vector of 
$W_{\mathrm{loc}}^{\mu}(\Lambda_0, \underline{b})^+$ and $\underline{k} \in \mathbb{Z}^{n-1}_{\geq 0}$. The Proposition \ref{prop2} implies that $W_{\mathrm{loc}}^\mu(\Lambda,\mathcal{M})=U(\mathcal{T}^+(\mu))\otimes_{A^\mu({\Lambda_0})}\C v_{\Lambda_0,\mathcal{M}}$ and similarly $W_{\mathrm{loc}}^\mu(\Lambda,\mathcal{M}^+)^+=U(\mathcal{T}^+(\mu))\otimes_{A^\mu({\Lambda_0})^+}\C v^+_{\Lambda_0,\mathcal{M}^+}$. One can observe that $h_{i,\underline{m}}v_{\Lambda_0}=0$ for all $i\in I_0$ and   $\underline{m}\in\Z^{n-1}.$
\begin{lemma}
    We have $$\mathrm{ch}_{q_1}W_{\mathrm{loc}}^{\mu}(\Lambda_0,\underline{b})^+=\mathrm{ch}_{q_1}W_{\mathrm{loc}}^{\mu}(\Lambda_0,\underline{0})^+$$ for all $\underline{b}\in\C^{n-1}$.
\end{lemma}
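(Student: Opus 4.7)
The plan is to prove the claim by constructing, for each $\underline{b}\in\C^{n-1}$, a Lie algebra automorphism $\Phi_{\underline{b}}$ of $\mathcal{T}^+(\mu)$ that preserves the $\delta_1$-grading, and then using the module twist by $\Phi_{\underline{b}}$ to exchange the local Weyl module at $\underline{0}$ with the one at $\underline{b}$ up to isomorphism of graded vector spaces.

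\textbf{Constructing the automorphism.} I would define $\Phi_{\underline{b}}$ as the restriction to $\mathcal{T}^+(\mu)$ of the ``shift'' automorphism $t_j\mapsto t_j+b_j$ for $2\leq j\leq n$, on the Chevalley-type generators from Section \ref{secpr}. Concretely,
$$\Phi_{\underline{b}}(e_{i,\underline{k}})=\sum_{\underline{k}'\leq\underline{k}}\binom{\underline{k}}{\underline{k}'}\underline{b}^{\underline{k}-\underline{k}'}e_{i,\underline{k}'},$$
analogously for $f_{i,\underline{k}}$, and fix $d_1$. Because each image is a $\C$-linear combination of generators of $\mathcal{T}^+(\mu)$, the subalgebra generated by the images sits inside $\mathcal{T}^+(\mu)$; together with the inverse $\Phi_{-\underline{b}}$ this yields an honest automorphism. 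The MRY-type relations of Section \ref{secpr} are preserved because the substitution is a $\C$-algebra automorphism of $\C[t_2,\ldots,t_n]$ which commutes with the $\mathfrak{g}$-valued bracket. Since $t_1$ is untouched, $\Phi_{\underline{b}}$ preserves the $\delta_1$-grading, and therefore for any $\mathcal{T}^+(\mu)$-module $V$ the twist $V^{\Phi_{\underline{b}}}$ (same underlying space, action $x\cdot_{\mathrm{new}} v:=\Phi_{\underline{b}}(x)\cdot v$) has the same $q_1$-character as $V$.

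\textbf{Twisting the local Weyl module.} Set $V=W^\mu_{\mathrm{loc}}(\Lambda_0,\underline{0})^+$ with highest weight vector $v_0$, so that $h_{0,\underline{k}}v_0=\delta_{\underline{k},\underline{0}}v_0$. Under the identification from Theorem \ref{thm1}, $h_{0,\underline{m}}$ corresponds (up to the central correction arising from $\phi$) to a ``$t^{\underline{m}}$''-multiple of the highest root coroot, so naive substitution gives
$$\Phi_{\underline{b}}(h_{0,\underline{m}})=\sum_{\underline{k}\leq\underline{m}}\binom{\underline{m}}{\underline{k}}\underline{b}^{\underline{m}-\underline{k}}h_{0,\underline{k}},$$
whence $h_{0,\underline{m}}\cdot_{\mathrm{new}}v_0=\underline{b}^{\underline{m}}v_0$, matching the $\underline{b}$-relation of Definition 4.3. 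The remaining defining relations of $W^\mu_{\mathrm{loc}}(\Lambda_0,\underline{b})^+$ are preserved: the raising operators $e_{i,\underline{m}}$ and nilpotency relations on $f_i$ map to linear combinations of elements that already act trivially on $v_0$ in $V$; the central-type elements $t^{\underline{m}}K_i$ with $m_i\geq 1$, $i\geq 2$, have their $\Phi_{\underline{b}}$-images lying in the same subspace $\{t^{\underline{k}}K_i:k_i\geq 1\}$ and hence annihilate $v_0$. Consequently $V^{\Phi_{\underline{b}}}$ is a quotient of $W^\mu_{\mathrm{loc}}(\Lambda_0,\underline{b})^+$, giving
$$\mathrm{ch}_{q_1}W^\mu_{\mathrm{loc}}(\Lambda_0,\underline{b})^+\geq\mathrm{ch}_{q_1}V^{\Phi_{\underline{b}}}=\mathrm{ch}_{q_1}W^\mu_{\mathrm{loc}}(\Lambda_0,\underline{0})^+.$$
The reverse inequality follows symmetrically by twisting $W^\mu_{\mathrm{loc}}(\Lambda_0,\underline{b})^+$ by $\Phi_{-\underline{b}}$, producing a module satisfying the $\underline{0}$-relations.

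\textbf{Main obstacle.} The subtle point is to verify that $\Phi_{\underline{b}}$ really preserves the ``$K_i$-part'' of $\mathcal{T}^+(\mu)$ for $i\geq 2$: a naive substitution $t^{\underline{m}}K_i\mapsto (t+b)^{\underline{m}}K_i$ would produce monomials with vanishing $t_i$-exponent, which fall outside $\mathcal{T}^+(\mu)$. The point is that the image must be computed through the bracket relations rather than by naive substitution, and the toroidal bracket formula $[x\otimes t^{\bf m},y\otimes t^{\bf n}]$ produces a $K_i$-coefficient equal to $m_i$; this ensures that at least one factor of $t_i$ is always ``carried through'' any realization of $t^{\underline{m}}K_i$ as a bracket of generators. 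Making this bookkeeping precise, case by case for the four types of $\mathfrak{g}$, is where essentially all the work of the proof lies.
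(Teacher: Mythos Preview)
Your approach is correct and is essentially the standard one: the paper itself defers to Proposition~3.14 of \cite{KR}, and Kodera's argument there is precisely the shift automorphism $t_j\mapsto t_j+b_j$ ($2\le j\le n$) followed by the module twist you describe. So the strategy matches.

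One comment on the ``main obstacle''. You are right that the automorphism must be defined on $\mathcal{T}^+(\mu)$ and not merely on the loop part, but the resolution is much cleaner than a type-by-type verification. The subalgebra $\mathcal{T}^+(\mu)$ is \emph{generated} (by definition) by the $e_{i,\underline{k}}$, $f_{i,\underline{k}}$ with $\underline{k}\in\Z_{\ge0}^{n-1}$, together with $d_1$; your $\Phi_{\underline{b}}$ sends each of these to a $\C$-linear combination of elements of the same kind, and so does $\Phi_{-\underline{b}}$. That already forces $\Phi_{\underline{b}}$ to be an automorphism of the subalgebra, with no need to compute its effect on the central part directly. If you nonetheless want to see the action on $t_1^{rk_1}t^{\underline{k}}K_i$ with $i\ge2$ and $k_i\ge1$, note that this element corresponds to the polynomial differential form $t_1^{rk_1}t^{\underline{k}-e_i}\,dt_i$; the shift, being an automorphism of $\C[t_1^{\pm r},t_2,\ldots,t_n]$ that fixes $t_1$ and sends $dt_i\mapsto dt_i$, takes this to another polynomial combination of $t_1^{rk_1}t^{\underline{l}}\,dt_i$ with $l_i\ge0$, i.e., to a combination of $t_1^{rk_1}t^{\underline{l}+e_i}K_i$ with $(l_i+1)\ge1$. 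In particular no $K_1$-term and no ``$k_i=0$'' term is produced, so the defining relation $t^{\underline{m}}K_i\,v_0=0$ is preserved under the twist. No dependence on the Dynkin type of $\mathfrak{g}$ enters.
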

\begin{proof}
    The proof follows by a similar argument as that of the proof of Proposition 3.14 of \cite{KR}.
\end{proof}
For the rest of this section, we will be working with $W_{\mathrm{loc}}^{\mu}(\Lambda_0,\underline{0})^+$, and we denote $v_{\Lambda_0,\underline{0}}^+$ by $v_0$. The following relations hold in $W_{\mathrm{loc}}^{\mu}(\Lambda_0,\underline{0})^+$:
$$ e_{i,\underline m}.v_0=0 \,\,  \forall\,i\in \tilde{I}_0,\, \underline m\in \Z^{n-1}_{\geq 0},\,\, h.v_0=\Lambda(h)v_0 \, \forall\, h\in \mathcal{T}_{aff}(\mu)^0,
$$
$$ h_{i,\underline m}.v_0=0, \,\,f_{i,\underline m}.v_0=0 \,\,  \forall\,i\in {I}_0,\, \underline m\in \Z^{n-1}_{\geq 0},
$$
$$f_i.v_0=0 \,\,  \forall\,i\in I_0,\,\, f_0^2.v_0=0,\,\,h_{0,\underline{m}}.v_0=0\,\,\forall\,\underline m\in \Z^{n-1}_{\geq 0}\setminus\{0\},$$
by using  $0=[e_{0,\underline{m}},f_0^2].v_0=(-2f_{0,\underline{m}}+2f_0h_{0,\underline{m}}).v_0$ and $h_{0,\underline{m}}.v_0=0\,\,\forall\underline m\in \Z^{n-1}_{\geq 0}\setminus\{0\}$ we have
$$f_{0,\underline{m}}.v_0=0\,\,\forall\underline m\in \Z^{n-1}_{\geq 0}\setminus\{0\}.$$
\begin{lemma}\label{prom 5}
Let $\underline m=(m_2,m_3,\dots,m_n)\in \Z^{n-1}_{\geq 0}$ be such that $p=m_2+m_3+\dots+m_n\geq 1$ and $l\in \Z$. Then we have
 \begin{enumerate}[label= \((\arabic*)\)]
\item

\begin{equation}
  e^{(j)}_{{\theta}_s}\otimes t_1^{-(rl-j)}t^{\underline m} v_0= \begin{cases}
    0, & \text{if $l\leq p$},\\[.5em]
r\displaystyle \sum_{q=1}^{l-p}(t_1^{r(-l+q)}t^{\underline m}K_1)(e^{(j)}_{{\theta}_s}\otimes t_1^{-(rq-j)})v_0, & \text{if $l> p$},
  \end{cases}
\end{equation}
for $0\leq j \leq r-1.$
\item

\begin{equation}
  t_1^{-rl}t_2^{m_2}\cdots t_i^{m_i+1}\cdots t_n^{m_n}K_iv_0 = \begin{cases}
    0, & \text{if $l\leq p$},\\[.5em]
\displaystyle r\sum_{q=1}^{l-p}(t_1^{r(-l+q)}t^{\underline m}K_1)(t_1^{-rq}t_iK_i)v_0, & \text{if $l> p$},
  \end{cases}
\end{equation} for $i=2,3,\dots,n.$
\end{enumerate}
\end{lemma}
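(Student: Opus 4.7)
The plan is to prove both parts of the lemma in parallel by induction on $l$. For $l \leq 0$ the $t_1$-degree $j - rl$ in (1) is nonnegative, so the element $e^{(j)}_{\theta_s} \otimes t_1^{j-rl} t^{\underline m}$ lies in $\mathcal{T}^+(\mu)$ and annihilates $v_0$; an analogous argument handles (2). The substantive work therefore begins at $l = 1$.

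For (1) at $l = 1$ with $j = r-1$, the element is exactly $f_{0, \underline m}$, which annihilates $v_0$ since $\underline m \neq 0$ (because $p \geq 1$). For $0 \leq j \leq r-2$ I would exploit the automorphism $\Psi_0 \Psi_{\theta_s}$ from Lemma \ref{RP1}, combined with Garland identities applied to the loop $\mathfrak{sl}_2$ subalgebras $\{e^{(j)}_{\theta_s} \otimes t_1^{j-r},\, f^{((r-j) \bmod r)}_{\theta_s} \otimes t_1^{r-j},\, h_\beta\} \otimes \C[b^{\pm 1}]$ (with $b$ a monomial in $t_2^{\pm 1}, \ldots, t_n^{\pm 1}$) inside $\mathcal{T}(\mu)$. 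The essential inputs are the level-one relations $f_0^2 v_0 = 0$ (extended via the automorphism to quadratic vanishings in the other graded components), $h_{0, \underline m} v_0 = 0$ for $\underline m \neq 0$, and the derived identity $t^{\underline m} K_1 v_0 = 0$ (for $\underline m \neq 0$), which collapse the higher coefficients in the Garland series $\mathbf{P}_h$ and force the desired vanishing.

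The inductive step ($l \geq 2$) is where the correction sum in the $l > p$ case emerges. The plan is to realise $e^{(j)}_{\theta_s} \otimes t_1^{j-rl} t^{\underline m}$ as the non-central part of a toroidal commutator $[X, Y]$, where $X$ carries the factor $t_1^{-r(l-q)} t^{\underline m}$ and $Y$ carries $t_1^{j-rq}$ for each $1 \leq q \leq l$. Only the first coordinate of $X$'s exponent contributes a $K_1$-term in the bracket (the $K_i$ for $i \geq 2$ act as zero on $v_0$ by the Weyl module defining relations), and this central piece is exactly the $q$-th summand $r(t_1^{r(-l+q)} t^{\underline m} K_1)(e^{(j)}_{\theta_s} \otimes t_1^{j-rq})$ of the stated formula. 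Summing over $q$ and combining with the inductive hypothesis on lower values of $l$ gives the conclusion: for $l \leq p$ the propagated contributions collapse to zero, while for $l > p$ the accumulated central $K_1$ terms assemble into the explicit sum.

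For (2) the argument is parallel, supplemented by the K\"ahler differential relation in $\mathcal{Z}'$ obtained by applying $\sum_{j} s_j \, t^{\mathbf{s}} K_j = 0$ to the multi-index $\mathbf{s}$ corresponding to $t_1^{-rl} t_2^{m_2} \cdots t_i^{m_i+1} \cdots t_n^{m_n}$. This rewrites the $K_i$-type element as a linear combination of the corresponding $K_1$ and $K_k$ (for $k \geq 2$, $k \neq i$) terms, reducing the claim to the $K_1$ case already handled. The $n = 2$ instance is Lemma~3.10(ii) of \cite{KR}, and my argument extends that setup. The principal obstacle will be the bookkeeping of the central $K_1$ contributions through the induction: the delicate interplay between the toroidal-bracket centre and the inductive hypothesis is precisely what forces the dichotomy between vanishing for $l \leq p$ and the explicit sum for $l > p$.
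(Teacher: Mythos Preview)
Your overall inductive scheme and the base case ($l\le 1$) are in line with the paper's proof. The genuine gap is in your inductive step for part (1). You propose to write $e^{(j)}_{\theta_s}\otimes t_1^{j-rl}t^{\underline m}$ as the ``non-central part'' of a single bracket $[X,Y]$ with $X$ carrying $t_1^{-r(l-q)}t^{\underline m}$ and $Y$ carrying $t_1^{j-rq}$, and you claim this bracket also produces the central $K_1$-summand. But that cannot happen: if the $\mathfrak g$-part of $[X,Y]$ lies in the root space $(\mathfrak g_j)_{\theta_s}$, then the underlying elements $a,b$ sit in root spaces whose sum is $\theta_s\neq 0$, hence $(a\,|\,b)=0$ and no central term appears. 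So your mechanism for generating the terms $r\,t_1^{r(-l+q)}t^{\underline m}K_1$ is not there, and the bookkeeping you describe does not get off the ground.

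In the paper the $K_1$-terms do not arise from such a bracket at all; they come from continuing to use the automorphism $\Psi_0\Psi_{\theta_s}$ in the inductive step (not only in the base case). One writes
\[
e^{(r-1)}_{\theta_s}\otimes t_1^{-(rl-(r-1))}t^{\underline m}\,v_0
=\Psi_0\Psi_{\theta_s}\bigl(e^{(1)}_{\theta_s}\otimes t_1^{-(r(l-1)-1)}t^{\underline m}\,\Psi_{\theta_s}^{-1}\Psi_0^{-1}v_0\bigr),
\]
uses $\Psi_{\theta_s}^{-1}\Psi_0^{-1}v_0=-f^{(r-1)}_{\theta_s}\otimes t_1^{-1}\,v_0$, and then the commutator $[f^{(r-1)}_{\theta_s}\otimes t_1^{-1},\,e^{(1)}_{\theta_s}\otimes t_1^{-(r(l-1)-1)}t^{\underline m}]$ \emph{does} have a central piece because $(f^{(r-1)}_{\theta_s}\,|\,e^{(1)}_{\theta_s})=r$. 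This single extra $K_1$-term, combined with the inductive hypothesis at $l-1$, is what builds the displayed sum. The remaining values of $j$ are then obtained by iterating the same automorphism. For part (2), the paper does not reduce $K_i$ to $K_1$ via the K\"ahler relation as you suggest; instead it isolates $t_1^{-rl}t^{\underline m'}K_i$ as the difference $\tfrac1r\bigl([f^{(0)}_{\theta_s}\otimes t_i,\,e^{(0)}_{\theta_s}\otimes t_1^{-rl}t^{\underline m}]-[f^{(0)}_{\theta_s},\,e^{(0)}_{\theta_s}\otimes t_1^{-rl}t^{\underline m'}]\bigr)v_0$ (the $h^{(0)}_{\theta_s}$-parts cancel, leaving only the $K_i$-contribution), and then applies part (1). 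Your K\"ahler-relation idea is used, but in the other direction: inside the proof of (1), to convert a $K_1$-term into $K_i$-terms that vanish by the simultaneous inductive hypothesis on (2). So (1) and (2) must be proved together, with the automorphism driving the induction.
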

\begin{proof}
   We prove assertions (1) and (2) using mathematical induction on $l$. Assume that $l\leq 0$, then clearly $-(rl-j)\geq 0$, and hence $e^{(j)}_{{\theta}_s}\otimes t_1^{-(rl-j)}t^{\underline m}\in \mathcal{T}^+(\mu)\cap \mathcal{T}(\mu)^{+}$, which imply that $e^{(j)}_{{\theta}_s}\otimes t_1^{-(rl-j)}t^{\underline m}v_0=0.$
   
   \noindent Next, assume that $l=1,$ then clearly for $j=r-1$ we have $e^{(j)}_{{\theta}_s}\otimes t_1^{-(rl-j)}t^{\underline m}=f_0\otimes t^{\underline m}, $ which imply that $e^{(j)}_{{\theta}_s}\otimes t_1^{-(rl-j)}t^{\underline m}.v_0=0.$ For $j=r-2$, by Lemma \ref{RP1} we have
$$\begin{aligned}
e^{(r-2)}_{{\theta}_s}\otimes t_1^{-2}t^{\underline m} v_0&=\Psi_0\Psi_{\theta_s}\Big(e^{(0)}_{{\theta}_s}\otimes t^{\underline m}\Psi^{-1}_{\theta_s}\Psi^{-1}_0(v_0)\Big) \\
&= \Psi_0\Psi_{\theta_s}\Big(e^{(0)}_{{\theta}_s}\otimes t^{\underline m} \Big(-f^{(r-1)}_{\theta_s}\otimes t^{-1}_1\Big)v_0\Big)\\ &{\qquad\qquad(\because\text{$\Psi^{-1}_{\theta_s}\Psi^{-1}_0(v_0)=-f^{(r-1)}_{\theta_s}\otimes t^{-1}_1v_0$})}\\
&=\Psi_0\Psi_{\theta_s}\Big(\Big[f^{(r-1)}_{\theta_s}\otimes t^{-1}_1, e^{(0)}_{{\theta}_s}\otimes t^{\underline m}\Big]v_0\Big)\\
&=\Psi_0\Psi_{\theta_s}\Big(\Big[f^{(0)}_{\theta_s}, e^{(r-1)}_{{\theta}_s}\otimes t^{-1}_1 t^{\underline m}\Big]v_0\Big)=0.
\end{aligned}$$
Following a similar argument as for the case of $j=r-2$, and using $e^{(r-2)}_{{\theta}_s}\otimes t_1^{-2}t^{\underline m}v_0=0$, we can prove that $e^{(j)}_{{\theta}_s}\otimes t_1^{-(r-j)}t^{\underline m}v_0=0\,\,\text{for all}\,\, j= r-1, \dots,0.$

\noindent Now we have,
$$\begin{aligned}
t_1^{-rl}t_2^{m_2}\cdots t_i^{m_i+1}&\cdots t_n^{m_n}K_iv_0\\&= \frac{1}{r}\Big(\Big[f^{(0)}_{\theta_s}\otimes t_i, e^{(0)}_{\theta_s}\otimes t_1^{-rl}t^{\underline m}\Big]- \Big[f^{(0)}_{\theta_s}, e^{(0)}_{\theta_s}\otimes t_1^{-rl}t_2^{m_2}\cdots t_i^{m_i+1}\cdots t_n^{m_n}\Big]\Big)v_0\\
&=0, \,\,\text{for $l\leq 1$}.
\end{aligned}$$
Thus, we proved the assertions (1) and (2) for $l\leq 1$.

\noindent Let $l>1$ and assume that the assertions (1) and (2) are true for all $l'<l$. For $j=r-1$ we have
$$\begin{aligned}
  e^{(r-1)}_{{\theta}_s}\otimes t_1^{-(rl-(r-1))}t^{\underline m}  &=\Psi_0\Psi_{\theta_s}\Big(e^{(1)}_{{\theta}_s}\otimes t^{-(r(l-1)-1)}_1 t^{\underline m}\Psi^{-1}_{\theta_s}\Psi^{-1}_0(v_0)\Big)\\
&=\Psi_0\Psi_{\theta_s}\Big(e^{(1)}_{{\theta}_s}\otimes t^{-(r(l-1)-1)}_1t^{\underline m}\Big(-f^{(r-1)}_{\theta_s}\otimes t^{-1}_1\Big)v_0\Big)\\
&=\Psi_0\Psi_{\theta_s}\Big(-f^{(r-1)}_{\theta_s}\otimes t^{-1}_1e^{(1)}_{{\theta}_s}\otimes t^{-(r(l-1)-1)}_1t^{\underline m}v_0\\& \,\,\,\,\,\,\,\,+\Big[f^{(r-1)}_{\theta_s}\otimes t^{-1}_1, e^{(1)}_{{\theta}_s}\otimes t^{-(r(l-1)-1)}_1t^{\underline m} \Big]v_0\Big)\\
&=\Psi_0\Psi_{\theta_s}\Big(-f^{(r-1)}_{\theta_s}\otimes t^{-1}_1e^{(1)}_{{\theta}_s}\otimes t^{-(r(l-1)-1)}_1t^{\underline m}v_0\\
&\,\,\,\,\,\,\,\,+\Big[f^{(r-1)}_{\theta_s}, e^{(1)}_{{\theta}_s} \Big] \otimes t^{-r(l-1)}_1t^{\underline m}v_0-(f^{(r-1)}_{\theta_s}|e^{(1)}_{{\theta}_s})t^{-r(l-1)}_1t^{\underline m}K_1v_0\Big)\\&=\Psi_0\Psi_{\theta_s}\Big(-f^{(r-1)}_{\theta_s}\otimes t^{-1}_1e^{(1)}_{{\theta}_s}\otimes t^{-(r(l-1)-1)}_1t^{\underline m}v_0 \\
&\,\,\,\,\,\,\,\,+ \Big[f^{(0)}_{\theta_s}, e^{(0)}_{{\theta}_s} \Big] \otimes t^{-r(l-1)}_1t^{\underline m}v_0 - rt^{-r(l-1)}_1t^{\underline m}K_1v_0\Big)
\end{aligned}$$
\begin{align}\label{eq3}
\begin{split}
e^{(r-1)}_{{\theta}_s}\otimes t_1^{-(rl-(r-1))}t^{\underline m} &=\Psi_0\Psi_{\theta_s}\Big(-f^{(r-1)}_{\theta_s}\otimes t^{-1}_1e^{(1)}_{{\theta}_s}\otimes t^{-(r(l-1)-1)}_1t^{\underline m}v_0 \\
&\,\,\,\,\,\,\,\,+ f^{(0)}_{\theta_s} \Big(e^{(0)}_{{\theta}_s}\otimes t^{-r(l-1)}_1t^{\underline m}\Big)v_0 - rt^{-r(l-1)}_1t^{\underline m}K_1v_0\Big).
\end{split}
\end{align}
By the assumption of mathematical induction, we have
\begin{align}\label{eq4}
  e^{(1)}_{{\theta}_s}\otimes t^{-(r(l-1)-1)}_1t^{\underline m}v_0&=\begin{cases} 0, & \text{if}\,\, l-1\leq p,\\[.5em]
  r\displaystyle \sum_{q=1}^{l-p-1}(t_1^{r(-l+q+1)}t^{\underline m}K_1)(e^{(1)}_{{\theta}_s}\otimes t_1^{-(rq-1)})v_0, & \text{if}\,\, l-1> p,
      \end{cases} 
\end{align}
and 
\begin{align}\label{eq5}
  e^{(0)}_{{\theta}_s}\otimes t^{-r(l-1)}_1t^{\underline m}v_0&=\begin{cases} 0, & \text{if}\,\, l-1\leq p,\\[.5em]
  r\displaystyle \sum_{q=1}^{l-p-1}(t_1^{r(-l+q+1)}t^{\underline m}K_1)(e^{(0)}_{{\theta}_s}\otimes t_1^{-rq})v_0, & \text{if}\,\, l-1> p.
      \end{cases} 
\end{align}
Using Equations (\ref{eq4}) and (\ref{eq5}) the right-hand side of Equation (\ref{eq3}) is equal to
    $$\begin{aligned}\Psi_0\Psi_{\theta_s}&\Big(-rt^{-r(l-1)}_1t^{\underline m}K_1v_0\Big)\,\, \text{if}\,\, l-1\leq p, \end{aligned}$$ 
    and to $$\begin{aligned}\Psi_0\Psi_{\theta_s}&\Big(-rf^{(r-1)}_{\theta_s}\otimes t^{-1}_1\Big(\displaystyle \sum_{q=1}^{l-p-1}(t_1^{r(-l+q+1)}t^{\underline m}K_1)(e^{(1)}_{{\theta}_s}\otimes t_1^{-(rq-1)})\Big)v_0
\\&+ rf^{(0)}_{\theta_s} \displaystyle \sum_{q=1}^{l-p-1}(t_1^{r(-l+q+1)}t^{\underline m}K_1)(e^{(0)}_{{\theta}_s}\otimes t_1^{-rq})v_0 - rt^{-r(l-1)}_1t^{\underline m}K_1v_0\Big)\,\,\text{if}\,\, l-1> p.
\end{aligned}$$
\noindent{\bf{Case (1) :}} If $l-1<p$, then by using assertion (2), we have
$$\begin{aligned}
    e^{(r-1)}_{{\theta}_s}\otimes t_1^{-(rl-(r-1))}t^{\underline m}&=\Psi_0\Psi_{\theta_s}\Big(-rt^{-r(l-1)}_1t^{\underline m}K_1v_0\Big)\\
    &=\Psi_0\Psi_{\theta_s}\Big(\frac{r}{-r(l-1)}\sum_{i=1}^{n}m_it^{-r(l-1)}_1t^{\underline m}K_iv_0 \Big)=0.
\end{aligned}$$
\noindent{\bf{Case (2) :}} If $l-1=p$, then by using \text{$\Psi_0\Psi_{\theta_s}(v_0)=-e^{(r-1)}_{\theta_s}\otimes t^{-1}_1v_0$} we have
$$\begin{aligned}
    e^{(r-1)}_{{\theta}_s}\otimes t_1^{-(rl-(r-1))}t^{\underline m}&=\Psi_0\Psi_{\theta_s}\Big(-rt^{-r(l-1)}_1t^{\underline m}K_1v_0\Big)\\
    &=-rt^{-r(l-1)}_1t^{\underline m}K_1 \Psi_0\Psi_{\theta_s}(v_0)\\
    &=rt^{-r(l-1)}_1t^{\underline m}K_1e^{(r-1)}_{\theta_s}\otimes t^{-1}_1v_0.
\end{aligned}$$
\noindent{\bf{Case (3) :}} If $l-1>p$, then by using \text{$\Psi_0\Psi_{\theta_s}(v_0)=-e^{(r-1)}_{\theta_s}\otimes t^{-1}_1v_0$} we have
\begin{align}\label{eq6}
e^{(r-1)}_{{\theta}_s}\otimes t_1^{-(rl-(r-1))}t^{\underline m}&=r\sum_{q=1}^{l-p-1}t_1^{r(-l+q+1)}t^{\underline m}K_1\Psi_0\Psi_{\theta_s}\Big(-f^{(r-1)}_{\theta_s}\otimes t^{-1}_1(e^{(1)}_{{\theta}_s}\otimes t_1^{-(rq-1)})v_0
\notag\\&\,\,\,\,\,\,\,\,+ f^{(0)}_{\theta_s} (e^{(0)}_{{\theta}_s}\otimes t_1^{-rq})v_0\Big)+ rt^{-r(l-1)}_1t^{\underline m}K_1e^{(r-1)}_{\theta_s}\otimes t^{-1}_1v_0.
\end{align}
\noindent{\bf{Claim 1 :}} We have $$e^{(r-1)}_{\theta_s}\otimes t_1^{-(rq+1)}=\Psi_0\Psi_{\theta_s}\Big(-f^{(r-1)}_{\theta_s}\otimes t^{-1}_1(e^{(1)}_{{\theta}_s}\otimes t_1^{-(rq-1)})v_0+ f^{(0)}_{\theta_s} (e^{(0)}_{{\theta}_s}\otimes t_1^{-rq})v_0\Big).$$

\noindent Proof of Claim 1:
$$\begin{aligned}
    e^{(r-1)}_{\theta_s}&\otimes t_1^{-(rq+1)}=\Psi_0\Psi_{\theta_s}\Big(e^{(1)}_{{\theta}_s}\otimes t^{-(rq-1)}_1 t^{\underline m}\Psi^{-1}_{\theta_s}\Psi^{-1}_0(v_0)\Big)\\
    &=\Psi_0\Psi_{\theta_s}\Big(-f^{(r-1)}_{\theta_s}\otimes t^{-1}_1(e^{(1)}_{{\theta}_s}\otimes t_1^{-(rq-1)})v_0+[f^{(r-1)}_{\theta_s}\otimes t^{-1}_1,e^{(1)}_{{\theta}_s}\otimes t_1^{-(rq-1)}] \Big)\\
    &=\Psi_0\Psi_{\theta_s}\Big(-f^{(r-1)}_{\theta_s}\otimes t^{-1}_1(e^{(1)}_{{\theta}_s}\otimes t_1^{-(rq-1)})v_0+[f^{(r-1)}_{\theta_s},e^{(1)}_{{\theta}_s}]\otimes t^{-rq}_1 \Big)\\
    &=\Psi_0\Psi_{\theta_s}\Big(-f^{(r-1)}_{\theta_s}\otimes t^{-1}_1(e^{(1)}_{{\theta}_s}\otimes t_1^{-(rq-1)})v_0+[f^{(0)}_{\theta_s},e^{(0)}_{{\theta}_s}]\otimes t^{-rq}_1 \Big)\\
    &=\Psi_0\Psi_{\theta_s}\Big(-f^{(r-1)}_{\theta_s}\otimes t^{-1}_1(e^{(1)}_{{\theta}_s}\otimes t_1^{-(rq-1)})v_0+ f^{(0)}_{\theta_s} (e^{(0)}_{{\theta}_s}\otimes t_1^{-rq})v_0\Big).
    \end{aligned}$$
    Using Claim 1 in Equation (\ref{eq6}), we get that
    \begin{align}\label{eq7}
    e^{(r-1)}_{{\theta}_s}\otimes t_1^{-(rl-(r-1))}t^{\underline m}&=r\sum_{q=1}^{l-p-1}t_1^{r(-l+q+1)}t^{\underline m}K_1e^{(r-1)}_{\theta_s}\otimes t_1^{-(rq+1)}\notag\\ &\,\,\,\,\,\,\,\,+ rt^{-r(l-1)}_1t^{\underline m}K_1e^{(r-1)}_{\theta_s}\otimes t^{-1}_1v_0\notag\\
    &=r\displaystyle \sum_{q=1}^{l-p}(t_1^{r(-l+q)}t^{\underline m}K_1)(e^{(j)}_{{\theta}_s}\otimes t_1^{-(rq-j)})v_0.
\end{align}
Following a similar argument as for the case of $j=r-1$, and using Equation (\ref{eq7}), we can prove the assertion (1) for $j=r-2$, and in this way, we can prove the assertion (1) for all $j= r -1,\dots,0.$
This completes the proof of assertion (1).

\noindent By assertion (1), we have
\begin{align}\label{eq8}
t_1^{-rl}t_2^{m_2}&\cdots t_i^{m_i+1}\cdots t_n^{m_n}K_iv_0 \notag\\&= \frac{1}{r}\Big(\Big[f^{(0)}_{\theta_s}\otimes t_i, e^{(0)}_{\theta_s}\otimes t_1^{-rl}t^{\underline m}\Big]- \Big[f^{(0)}_{\theta_s}, e^{(0)}_{\theta_s}\otimes t_1^{-rl}t_2^{m_2}\cdots t_i^{m_i+1}\cdots t_n^{m_n}\Big]\Big)v_0\notag\\
     &= \frac{1}{r}\Big(f^{(0)}_{\theta_s}\otimes t_i( e^{(0)}_{\theta_s}\otimes t_1^{-rl}t^{\underline m})v_0-f^{(0)}_{\theta_s} (e^{(0)}_{\theta_s}\otimes t_1^{-rl}t_2^{m_2}\cdots t_i^{m_i+1}\cdots t_n^{m_n})v_0\Big).
\end{align}
Using assertion (1), the right-hand side of Equation (\ref{eq8}) is equal to $$\begin{cases}
    0,& \text{if}\,\, l< p+1,\\[.5em]
    r(t_1^{r(-l+1)}t^{\underline m}K_1)(t_1^{-r}t_iK_i),&\text{if}\,\,l=p+1,
    \end{cases}$$
    and to 
    $$\begin{aligned}
       &f^{(0)}_{\theta_s}\otimes t_i\Big( \sum_{q=1}^{l-p}(t_1^{r(-l+q)}t^{\underline m}K_1)(e^{(0)}_{{\theta}_s}\otimes t_1^{-rq})\Big)v_0\\
       &\,\,\,\,\,\,\,\,-f^{(0)}_{\theta_s} \Big( \sum_{q=1}^{l-(p+1)}(t_1^{r(-l+q)}t_2^{m_2}\cdots t_i^{m_i+1}\cdots t_n^{m_n}K_1)(e^{(0)}_{{\theta}_s}\otimes t_1^{-rq})\Big)v_0,\,\, \text{if}\,\,l>p+1.
    \end{aligned}$$
Thus, the assertion (2) is true for $l\leq p+1$; therefore, we may assume $l>p+1$. We have
    \begin{align}\label{eq9}
        t_1^{-rl}t_2^{m_2}&\cdots t_i^{m_i+1}\cdots t_n^{m_n}K_iv_0\notag\\&=f^{(0)}_{\theta_s}\Big(\sum_{q=1}^{l-p}(t_1^{r(-l+q)}t^{\underline m}K_1)(e^{(0)}_{{\theta}_s}\otimes t_1^{-rq}t_i)\Big)v_0+r\sum_{q=1}^{l-p}(t_1^{r(-l+q)}t^{\underline m}K_1)(t_1^{-rq}t_iK_i)v_0\notag\\
       &\,\,\,\,\,\,\,\,-f^{(0)}_{\theta_s} \Big( \sum_{q=1}^{l-(p+1)}(t_1^{r(-l+q)}t_2^{m_2}\cdots t_i^{m_i+1}\cdots t_n^{m_n}K_1)(e^{(0)}_{{\theta}_s}\otimes t_1^{-rq})\Big)v_0.
    \end{align}
    \noindent{\bf{Claim 2 :}} We have 
    $$\begin{aligned}
\sum_{q=1}^{l-p}(t_1^{r(-l+q)}t^{\underline m}K_1)(e^{(0)}_{{\theta}_s}\otimes t_1^{-rq}t_i)\Big)v_0=\sum_{q=1}^{l-(p+1)}(t_1^{r(-l+q)}t_2^{m_2}\cdots t_i^{m_i+1}\cdots t_n^{m_n}K_1)(e^{(0)}_{{\theta}_s}\otimes t_1^{-rq})v_0.
\end{aligned}$$

\noindent Proof of claim 2 follows by just applying $h^{(0)}_{\theta_s}\otimes t_i$ on both sides of assertion (1) and using assertion (1) again for $l>p+1$ when $j=0$.
Now, by applying Claim 2 in Equation (\ref{eq9}), assertion (2) follows.
\end{proof}
Let $\overline{\mathcal{Z}}$ and $\hat{\mathcal{Z}}$ are the subalgebras of $\mathcal{T}^+(\mu)$ generated by $\{t_1^{-rl} t^{\underline m}K_i: l>0,\,\underline m\in\Z^{n-1}_{\geq 0},\, i=2,3,\dots,n\}$ and $\{t_1^{-rl} t_iK_1: l>0,\, i=2,3,\dots,n\}$, respectively.

\begin{lemma}
We have $\overline{\mathcal{Z}}v_0=\hat{\mathcal{Z}}v_0$.
\end{lemma}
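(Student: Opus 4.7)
The plan is to establish the two inclusions separately. Both $\overline{\mathcal{Z}}$ and $\hat{\mathcal{Z}}$ are commutative subalgebras of $U(\mathcal{T}^+(\mu))$ because all their generators lie in the space of K\"ahler differentials (in particular, they commute with each other), and the crucial tool connecting the two is the defining relation $\sum_{i=1}^{n} s_i t^{\mathbf{s}}K_i = 0$ of this space combined with Lemma \ref{prom 5} (2).

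The inclusion $\hat{\mathcal{Z}}v_0 \subseteq \overline{\mathcal{Z}}v_0$ is immediate: specializing the K\"ahler relation to $\mathbf{s}=(-rl,e_i)$ gives $t_1^{-rl}t_i K_1 = \frac{1}{rl}\,t_1^{-rl}t_i K_i$, which exhibits every generator of $\hat{\mathcal{Z}}$ as a scalar multiple of a generator of $\overline{\mathcal{Z}}$, so $\hat{\mathcal{Z}} \subseteq \overline{\mathcal{Z}}$ as subalgebras.

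For the reverse inclusion I would induct on the complexity $c(X) := \sum_k (|\underline{m}_k|-1)$ of a monomial $X = \prod_{k}(t_1^{-rl_k}t^{\underline{m}_k}K_{i_k})$ in the generators of $\overline{\mathcal{Z}}$, where $|\underline{m}| := m_2+\cdots+m_n$. In the base case $c(X)=0$ every factor satisfies $\underline{m}_k = e_{i_k}$, and the K\"ahler rewriting of the previous paragraph places each factor directly in $\hat{\mathcal{Z}}$. For the inductive step, pick a factor $X_k$ with $|\underline{m}_k|\geq 2$; since the generators commute among themselves, I may rearrange $X$ and let $X_k$ act on $v_0$ first. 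If $l_k < |\underline{m}_k|$ then Lemma \ref{prom 5} (2) already yields $X_k v_0 = 0$; otherwise the lemma expresses $X_k v_0$ as a sum over $q=1,\ldots,l_k-|\underline{m}_k|+1$ of the terms $(t_1^{-r(l_k-q)}t^{\underline{m}_k-e_{i_k}}K_1)(t_1^{-rq}t_{i_k}K_{i_k})v_0$. The range of $q$ forces $l_k-q\geq 1$ throughout, so I may apply the K\"ahler relation to the $K_1$-factor and rewrite it as $\frac{1}{r(l_k-q)}\sum_{j\geq 2}(m_{k,j}-\delta_{i_k,j})\,t_1^{-r(l_k-q)}t^{\underline{m}_k-e_{i_k}}K_j$, which is a linear combination of $\overline{\mathcal{Z}}$-generators of $t$-degree $|\underline{m}_k|-1$. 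The factor $X_k$ is thereby replaced by a sum of products of two factors of $t$-degrees $|\underline{m}_k|-1$ and $1$, so the new monomials all have complexity $c(X)-1$ and the inductive hypothesis places $X v_0$ in $\hat{\mathcal{Z}}v_0$.

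The main bookkeeping issue is to verify that the K\"ahler step does produce legitimate generators of $\overline{\mathcal{Z}}$: the nonzero coefficients $m_{k,j}-\delta_{i_k,j}$ correspond exactly to those indices $j\geq 2$ with $(\underline{m}_k-e_{i_k})_j\geq 1$, which is precisely the condition for $t_1^{-r(l_k-q)}t^{\underline{m}_k-e_{i_k}}K_j$ to lie in $\mathcal{T}^+(\mu)$ and hence in $\overline{\mathcal{Z}}$. The other point requiring care is that the summation in Lemma \ref{prom 5} (2) ranges only up to $l_k - |\underline{m}_k| + 1 < l_k$, which is exactly what keeps any $q=l_k$ boundary term (where $s_1 = 0$ and the K\"ahler relation no longer expresses $K_1$ in terms of $K_j$) from ever appearing in the reduction. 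Once these two items are checked the induction closes cleanly.
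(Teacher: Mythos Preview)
Your argument is correct and follows the same route as the paper: one inclusion comes directly from the K\"ahler relation $\sum_i s_i t^{\mathbf{s}}K_i=0$, and the other is obtained by iterated use of Lemma~\ref{prom 5}(2) together with that relation to strip the $t^{\underline m}$-degree down to $1$. The paper does not spell this out but simply points to \cite[Proposition~5.2]{SSS}, whose proof is exactly this reduction; your complexity function $c(X)=\sum_k(|\underline m_k|-1)$ is just a clean way of packaging that induction, and your two bookkeeping checks (that the nonzero coefficients $m_{k,j}-\delta_{i_k,j}$ pick out precisely the $K_j$-terms lying in $\mathcal{T}^+(\mu)$, and that $l_k-q\ge |\underline m_k|-1\ge1$ throughout the sum so the K\"ahler step never degenerates) are exactly the points one has to verify.
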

\begin{proof}
The proof follows with a similar technique as that of the proof of proposition 5.2 of \cite{SSS}.
\end{proof}
\begin{lemma}\label{lemma2}
We have $W_{\mathrm{loc}}^{\mu}(\Lambda_0,\underline{0})^+=\hat{\mathcal{Z}}U(\mathcal{T}_{\mathrm{aff}}(\mu)^-)v_0$.
\end{lemma}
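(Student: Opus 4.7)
The plan is to reduce $U(\mathcal{T}^+(\mu))v_0$ to the form $\hat{\mathcal{Z}}\,U(\mathcal{T}_{\mathrm{aff}}(\mu)^-)v_0$ in two stages. The reverse inclusion is immediate, since $\hat{\mathcal{Z}}\cup\mathcal{T}_{\mathrm{aff}}(\mu)^-\subseteq\mathcal{T}^+(\mu)$ and $W_{\mathrm{loc}}^{\mu}(\Lambda_0,\underline{0})^+=U(\mathcal{T}^+(\mu))v_0$ by Proposition \ref{prop2}(3).

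\textbf{Stage 1 (Triangular and Cartan reduction).} By PBW applied to the triangular decomposition $\mathcal{T}^+(\mu)=\mathcal{T}^+(\mu)^-\oplus\mathcal{T}^+(\mu)^0\oplus\mathcal{T}^+(\mu)^+$, together with the defining relation $\mathcal{T}^+(\mu)^+v_0=0$, we obtain $U(\mathcal{T}^+(\mu))v_0=U(\mathcal{T}^+(\mu)^-)U(\mathcal{T}^+(\mu)^0)v_0$. Next, the Cartan block $\mathfrak{h}_0\otimes t^{\underline{m}}$ annihilates $v_0$ for $\underline m\neq 0$ and acts by scalars for $\underline{m}=0$, while $d_1, K_1$ act by $\Lambda_0$-scalars. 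For the $K_1$-type center, the identity $h_{0,\underline{m}}=-h'_{\theta_0}\otimes t^{\underline{m}}+t^{\underline{m}}K_1$ combined with $h_{0,\underline{m}}v_0=0$ for $\underline{m}\neq 0$ gives $t^{\underline{m}}K_1v_0=0$. The $K_i$-type center with $i\ge 2$ lies in $\overline{\mathcal{Z}}$, and the previous lemma identifies $\overline{\mathcal{Z}}v_0=\hat{\mathcal{Z}}v_0$. Hence $U(\mathcal{T}^+(\mu)^0)v_0\subseteq U(\hat{\mathcal{Z}})v_0$. Since $\hat{\mathcal{Z}}$ lies in the center of $\mathcal{T}(\mu)$ (and is abelian, so $U(\hat{\mathcal{Z}})=S(\hat{\mathcal{Z}})$), its elements commute past $U(\mathcal{T}^+(\mu)^-)$, yielding $W_{\mathrm{loc}}^{\mu}(\Lambda_0,\underline{0})^+\subseteq \hat{\mathcal{Z}}\,U(\mathcal{T}^+(\mu)^-)v_0$.

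\textbf{Stage 2 (Negative-part reduction).} It remains to show $U(\mathcal{T}^+(\mu)^-)v_0\subseteq \hat{\mathcal{Z}}\,U(\mathcal{T}_{\mathrm{aff}}(\mu)^-)v_0$. Proceed by induction on the number of ``horizontal'' PBW generators (those with $\underline{m}\neq 0$) and on their total $t_1$-depth. For a horizontal central generator $t_1^{-rl}t^{\underline{m}}K_i$, apply Lemma \ref{prom 5}(2) and the Kähler relation $\sum s_i t^{\underline{s}}K_i=0$ to reduce to $\hat{\mathcal{Z}}v_0$. For a horizontal root vector of the form $e^{(j)}_{\theta_s}\otimes t_1^{-(rl-j)}t^{\underline{m}}$, Lemma \ref{prom 5}(1) rewrites its action on $v_0$ as a sum $r\sum_q(t_1^{r(-l+q)}t^{\underline{m}}K_1)(e^{(j)}_{\theta_s}\otimes t_1^{-(rq-j)})v_0$, where each prefactor belongs to $\hat{\mathcal{Z}}v_0$ (via the Kähler relation and $\overline{\mathcal{Z}}v_0=\hat{\mathcal{Z}}v_0$) and each $e^{(j)}_{\theta_s}\otimes t_1^{-(rq-j)}$ lies in $\mathcal{T}_{\mathrm{aff}}(\mu)^-$. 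For an arbitrary horizontal root vector $x\otimes t_1^{-(rl-j)}t^{\underline{m}}$, use that $\mathfrak{g}_j$ is a cyclic $U(\mathfrak{g}_0)$-module with $e^{(j)}_{\theta_s}$ as (highest or lowest weight) cyclic vector, so $x\otimes P=\mathrm{ad}(y)(e^{(j)}_{\theta_s}\otimes P)$ for some $y\in U(\mathfrak{g}_0\otimes 1)\subseteq U(\mathcal{T}_{\mathrm{aff}}(\mu))$; expanding the commutator, using $f_i v_0=0$ for $i\in I_0$ (a defining relation at level one since $\Lambda_0(h_i^{(0)})=0$), and applying PBW for $\mathcal{T}_{\mathrm{aff}}(\mu)$ to reorder the result, keeps everything in $\hat{\mathcal{Z}}\,U(\mathcal{T}_{\mathrm{aff}}(\mu)^-)v_0$. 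Horizontal simple-root vectors $f_i^{(0)}\otimes t^{\underline{m}}$ with $t_1$-power zero are handled by an analogous Garland-type manipulation on the appropriate $\mathfrak{sl}_2$-triplet, reducing to the cases already treated.

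\textbf{Main obstacle.} The delicate point is Stage 2, specifically the reduction of horizontal root vectors $x\otimes t_1^{-(rl-j)}t^{\underline{m}}$ for $x\neq e^{(j)}_{\theta_s}$. Lemma \ref{prom 5} only treats the $e^{(j)}_{\theta_s}$-case directly, so one must control the PBW bookkeeping that arises from iterated $\mathfrak{g}_0$-bracketing: each commutator produces additional $\mathcal{T}_{\mathrm{aff}}(\mu)$-terms that must be reabsorbed into $\hat{\mathcal{Z}}\,U(\mathcal{T}_{\mathrm{aff}}(\mu)^-)v_0$ without increasing the induction invariant. A closely analogous argument is carried out in Proposition~5.2 of \cite{SSS} in the untwisted toroidal setting, and the same method adapts, with the extra care required to handle the $\Z/r\Z$-grading and the $r$-dependent constants appearing in Lemma \ref{prom 5}.
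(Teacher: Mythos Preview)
Your approach follows the same two-stage strategy as the paper, and is essentially correct, but you overcomplicate Stage~2 and make a small slip in Stage~1.

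In Stage~1, the elements $t^{\underline m}K_i$ with $i\ge 2$ and zero $t_1$-power are \emph{not} in $\overline{\mathcal{Z}}$ (whose generators all have $l>0$). You do not need the previous lemma here: these elements either vanish by the K\"ahler relation (e.g.\ $t_2K_2=0$) or are killed by the defining relations of $W_{\mathrm{loc}}^\mu(\Lambda_0,\underline 0)^+$. The paper bypasses Stage~1 entirely by noting directly that $W_{\mathrm{loc}}^{\mu}(\Lambda_0,\underline{0})^+=U(\mathcal{T}_{\mathrm{aff}}(\mu)^-\otimes\C[t_2,\dots,t_n])v_0$, using the list of annihilation relations established just before Lemma~\ref{prom 5} (in particular $f_{i,\underline m}v_0=0$ and $h_{i,\underline m}v_0=0$ for $i\in I_0$).

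Your ``main obstacle'' in Stage~2 is not an obstacle at all. At level one we have $e_i^{(0)}v_0=f_i^{(0)}v_0=h_i^{(0)}v_0=0$ for all $i\in I_0$, so the \emph{entire} Lie algebra $\mathfrak g_0$ annihilates $v_0$. Hence if $y=y_1\cdots y_k\in U(\mathfrak g_0)$ has no constant term, the nested-commutator expansion collapses to $(\mathrm{ad}\,y)(e^{(j)}_{\theta_s}\otimes P)\cdot v_0=y_1\cdots y_k\cdot(e^{(j)}_{\theta_s}\otimes P)\cdot v_0$. Applying Lemma~\ref{prom 5}(1), commuting the central factors past the $y_i$, and then running the same collapse in reverse yields Equation~(\ref{eq13}) of the paper,
\[
x\otimes t_1^{-(rl-j)}t^{\underline m}\,v_0=r\sum_{q=1}^{l-p}(t_1^{r(-l+q)}t^{\underline m}K_1)(x\otimes t_1^{-(rq-j)})v_0,
\]
for \emph{every} $x\in\mathfrak g_j$, with no PBW bookkeeping. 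So Lemma~\ref{prom 5}(1) upgrades for free from $e^{(j)}_{\theta_s}$ to arbitrary $x$; there is nothing delicate here. (Likewise, your separate treatment of $f_i^{(0)}\otimes t^{\underline m}$ via Garland identities is unnecessary: these already kill $v_0$.)

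In short, the paper's proof and yours are the same in outline; the paper's key simplification is the observation $\mathfrak g_0\cdot v_0=0$, which you should state and use directly rather than flagging the general-$x$ case as problematic.
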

\begin{proof}
    Note that $W_{\mathrm{loc}}^{\mu}(\Lambda_0,\underline{0})^+=U(\mathcal{T}_{\mathrm{aff}}(\mu)^-\otimes\C[t_2,\dots,t_n]).v_0$, where $\mathcal{T}_{\mathrm{aff}}(\mu)^-=\mathfrak{n}_0^-\oplus  \bigoplus _{{j}={0}}^{{r-1}}\mathfrak{g}_j\otimes t_1^{j-r}\C[t_1^{-r}])$ and it is easy to see that  $W_{\mathrm{loc}}^{\mu}(\Lambda_0,\underline{0})^+\supseteq\hat{\mathcal{Z}}U(\mathcal{T}_{\mathrm{aff}}(\mu)^-)v_0$, so we just need to prove that $W_{\mathrm{loc}}^{\mu}(\Lambda_0,\underline{0})^+\subseteq\hat{\mathcal{Z}}U(\mathcal{T}_{\mathrm{aff}}(\mu)^-)v_0$. To prove this, it is enough to show that $\mathcal{T}_{\mathrm{aff}}(\mu)^-\otimes\C[t_2,\dots,t_n].v_0\subseteq \hat{\mathcal{Z}}U(\mathcal{T}_{\mathrm{aff}}(\mu)^-)v_0.$ Let $x\otimes t_1^{-k}t^{\underline{m}}\in\mathcal{T}_{\mathrm{aff}}(\mu)^-\otimes\C[t_2,\dots,t_n]$, clearly $k\in\Z_{\geq0}$ and $\underline{m}\in\Z^{n-1}_{\geq0}$.
If $\underline{m}=\underline{0}$ then there is nothing to show and hence assume that $\underline{m}\neq\underline{0}.$
Let $k=rl-j$, where $l\geq0,\,\, 0\leq j\leq r-1$, we have $x\in\mathfrak{g}_j$. Since $\mathfrak{g}_j$ is an irreducible $\mathfrak{g}_0$- module so for $e_{\theta_s}^{(j)}\in \mathfrak{g}_j$, there exists $y\in U(\mathfrak{g}_0)$ such that $y.e_{\theta_s}^{(j)}=x$. Let $p=m_2+\cdots+m_n$. By using $y.v_0=0$ and assertion (1) of Lemma [\ref{prom 5}], we have
    \begin{equation}\label{eq13}
 x\otimes t_1^{-(rl-j)}t^{\underline m} v_0= \begin{cases}
    0, & \text{if $l\leq p$},\\[.5em]
r\displaystyle \sum_{q=1}^{l-p}(t_1^{r(-l+q)}t^{\underline m}K_1)(x\otimes t_1^{-(rq-j)})v_0, & \text{if $l> p$}.
  \end{cases}
\end{equation}
    Using Equation (\ref{eq13}), we get that $x\otimes t_1^{-k}t^{\underline{m}}.v_0\in\hat{\mathcal{Z}}U(\mathcal{T}_{\mathrm{aff}}(\mu)^-)v_0$ and hence  $\mathcal{T}_{\mathrm{aff}}(\mu)^-\otimes\C[t_2,\dots,t_n].v_0\subseteq \hat{\mathcal{Z}}U(\mathcal{T}_{\mathrm{aff}}(\mu)^-)v_0.$
\end{proof}
\begin{prop}\label{prop1}
    We have
    $$\mathrm{ch}_{q_1}W_{\mathrm{loc}}^{\mu}(\Lambda_0,\underline{a})\leq\mathrm{ch}_{q_1}W_{\mathrm{loc}}^{\mu}(\Lambda_0,\underline{a})^+ \leq \mathrm{ch}_{q_1}L(\Lambda_0)\left(\displaystyle\prod_{p>0}\frac{1}{1-q_1^p}\right)^{n-1},$$
     $$\mathrm{ch}_{q_1,q_2,\dots,q_n}W_{\mathrm{loc}}^{\mu}(\Lambda_0,\underline{a})\leq\mathrm{ch}_{q_1,q_2,\dots,q_n}W_{\mathrm{loc}}^{\mu}(\Lambda_0,\underline{a})^+ \leq \mathrm{ch}_{q_1}L(\Lambda_0)\left(\displaystyle\prod_{p>0,j=2}^{n}\frac{1}{1-q_1^pq_j}\right),$$
     where $\underline{a}\in{(\C^*)}^{n-1}$ and $L(\Lambda_0)$ is the level one integrable irreducible highest weight module of $\mathcal{T}_{\mathrm{aff}}(\mu)$ with highest weight $\Lambda_0$.
\end{prop}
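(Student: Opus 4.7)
The plan is to establish each inequality in both chains; the $q_1$-character and $(q_1,\ldots,q_n)$-character arguments are parallel, so I describe them together. For the first inequality $\mathrm{ch}\, W_{\mathrm{loc}}^{\mu}(\Lambda_0,\underline{a})\leq \mathrm{ch}\, W_{\mathrm{loc}}^{\mu}(\Lambda_0,\underline{a})^+$, I appeal to Proposition \ref{prop2}(3), which gives $W_{\mathrm{loc}}^{\mu}(\Lambda_0,\underline{a})=U(\mathcal{T}^+(\mu))v_{\Lambda_0,\underline{a}}$. The vector $v_{\Lambda_0,\underline{a}}$ satisfies all defining relations of the $\mathcal{T}^+(\mu)$-module $W_{\mathrm{loc}}^{\mu}(\Lambda_0,\underline{a})^+$: its relations use only $\underline m\in\Z^{n-1}_{\geq 0}$, a subset of those defining $W_{\mathrm{loc}}^{\mu}(\Lambda_0,\underline{a})$, with matching eigenvalues $\underline{a}^{\underline m}$. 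Hence there is a surjective, weight-preserving, $\Z^{n-1}$-graded $\mathcal{T}^+(\mu)$-homomorphism $W_{\mathrm{loc}}^{\mu}(\Lambda_0,\underline{a})^+\twoheadrightarrow W_{\mathrm{loc}}^{\mu}(\Lambda_0,\underline{a})$, yielding both character inequalities at once.

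For the upper bound, the lemma directly above this proposition reduces the $q_1$-character bound to the case $\underline{a}=\underline{0}$; for the multivariate bound one works from the outset with $W_{\mathrm{loc}}^{\mu}(\Lambda_0,\underline{0})^+$, where the $\Z^{n-1}$-grading is manifest. By Lemma \ref{lemma2}, $W_{\mathrm{loc}}^{\mu}(\Lambda_0,\underline{0})^+=\hat{\mathcal{Z}}\cdot U(\mathcal{T}_{\mathrm{aff}}(\mu)^-)v_0$. Since $\hat{\mathcal{Z}}\subset\mathcal{Z}'$ commutes with $L_n(\mathfrak{g},\mu)$, in particular with $\mathcal{T}_{\mathrm{aff}}(\mu)^-$, the multiplication map $\hat{\mathcal{Z}}\otimes U(\mathcal{T}_{\mathrm{aff}}(\mu)^-)v_0\twoheadrightarrow W_{\mathrm{loc}}^{\mu}(\Lambda_0,\underline{0})^+$ is weight-graded and surjective, so the character factors as
$$\mathrm{ch}\, W_{\mathrm{loc}}^{\mu}(\Lambda_0,\underline{0})^+\leq \mathrm{ch}(\hat{\mathcal{Z}})\cdot\mathrm{ch}\bigl(U(\mathcal{T}_{\mathrm{aff}}(\mu)^-)v_0\bigr).$$

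To bound the second factor, I verify that $v_0$ satisfies the defining relations of the basic representation $L(\Lambda_0)$ of $\mathcal{T}_{\mathrm{aff}}(\mu)$: $\mathcal{T}_{\mathrm{aff}}(\mu)^+v_0=0$ and $h.v_0=\Lambda_0(h)v_0$ are part of the definition of $W^+$; $f_iv_0=0$ for $i\in I_0$ follows from $\Lambda_0(h_i^{(0)})=0$; and $f_0^2v_0=0$ follows from $\Lambda_0(h_0)=1$. Hence $U(\mathcal{T}_{\mathrm{aff}}(\mu)^-)v_0$ is a quotient of $L(\Lambda_0)$ and its $q_1$-character is dominated by $\mathrm{ch}_{q_1}L(\Lambda_0)$. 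For the first factor, the commuting generators $t_1^{-rl}t_iK_1$ of $\hat{\mathcal{Z}}$ ($l>0$, $i=2,\ldots,n$) have $d_1$-weight $-rl$ and $d_j$-weight $\delta_{ij}$, so
$$\mathrm{ch}_{q_1}(\hat{\mathcal{Z}})\leq\prod_{l>0,\,i=2}^{n}\frac{1}{1-q_1^{rl}},\qquad \mathrm{ch}_{q_1,\ldots,q_n}(\hat{\mathcal{Z}})\leq\prod_{l>0,\,i=2}^{n}\frac{1}{1-q_1^{rl}q_i}.$$
Partitions into multiples of $r$ form a subset of all partitions, so coefficient-wise $\prod_{l>0}(1-q_1^{rl})^{-1}\leq\prod_{p>0}(1-q_1^p)^{-1}$ and $\prod_{l>0}(1-q_1^{rl}q_i)^{-1}\leq\prod_{p>0}(1-q_1^pq_i)^{-1}$; multiplying over $i$ and combining with the bound on $U(\mathcal{T}_{\mathrm{aff}}(\mu)^-)v_0$ gives the two stated upper bounds.

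The main subtle point is converting the spanning statement of Lemma \ref{lemma2} into a genuine tensor-product character bound, which hinges on $\hat{\mathcal{Z}}$ lying inside $\mathcal{Z}'$---so each generator is weight-homogeneous and commutes through $U(\mathcal{T}_{\mathrm{aff}}(\mu)^-)$---together with the coefficient-wise comparison between generating functions for partitions into multiples of $r$ and for unrestricted partitions.
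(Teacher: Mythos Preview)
Your proposal is correct and follows essentially the same approach as the paper's: the paper's proof is a one-line reference to Proposition~5.4 of \cite{SSS} together with Lemma~\ref{lemma2}, and your expansion reproduces exactly those ingredients---the surjection $W_{\mathrm{loc}}^{\mu}(\Lambda_0,\underline{a})^+\twoheadrightarrow W_{\mathrm{loc}}^{\mu}(\Lambda_0,\underline{a})$ via Proposition~\ref{prop2}(3) for the first inequality, the reduction to $\underline{a}=\underline{0}$ via the preceding lemma, and the spanning statement of Lemma~\ref{lemma2} combined with the centrality of $\hat{\mathcal{Z}}$ and the universality of $L(\Lambda_0)$ for the second. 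Two minor remarks: (i) where you write the surjection from $\hat{\mathcal{Z}}\otimes U(\mathcal{T}_{\mathrm{aff}}(\mu)^-)v_0$, you mean $U(\hat{\mathcal{Z}})=S(\hat{\mathcal{Z}})$, as your product formula already reflects; (ii) your observation that the $d_1$-weights of the generators of $\hat{\mathcal{Z}}$ lie in $r\mathbb{Z}$, yielding the sharper factor $\prod_{l>0}(1-q_1^{rl})^{-1}$ before your coefficient-wise weakening, is correct and worth retaining.
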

\begin{proof}
The proof follows by using a similar argument as that of the proof of proposition 5.4 of \cite{SSS} and Lemma \ref{lemma2}.
\end{proof}
\section{Realization of Global Weyl Modules of Level One} \label{secv} 
In this section, we will obtain the realization of $W_{\mathrm{glob}}^{\mu}(\Lambda_0)$ in terms of representation of tensor product of certain vertex algebras. We begin with some definitions and results which are relevant to the result of this paper. We start with the following: 
\begin{definition}
A vertex algebra $V$ is a vector space with a distinguished vector $|0 \rangle$, a translation operator $T\in\mathrm{End}(V)$ and a  map $Y : V  \rightarrow \mathrm{End} \,\, V [[z,z^{-1}]]$, $Y(a, z) = \sum_{n \in \mathbb{Z}} a_{(n)} z^{-n -1}$ satisfying the following axioms:
\begin{enumerate}
\item $Y(a,z)b \in V[[z^{-1}]][z]$ for all $a, b \in V$.
\item $Y(|0 \rangle, z) $ is an identity map on $V$; $a_{(-1)} |0 \rangle = a$.
\item $T(a)=a_{(-2)}|0\rangle$, $[T,Y(a,z)]=\frac{\partial}{\partial z}Y(a,z)$ for all $a\in V$.
\item Borcherds identity:
$$\sum_{s= 0}^{\infty}{(-1)^j \binom {n} {s} \big( a_{(p +n -s)} (b_{(q+s)}c) - (-1)^{n} b_{(q+n-s)}(a_{(p+s)}c) \big)} = \sum_{s= 0}^{\infty}{\binom {p} {s}(a_{(q+p-s)}c)},
$$ for all $a,b, c \in V$ and $p,q,s \in \mathbb{Z}$.
\end{enumerate}
\end{definition}
The vector $|0 \rangle$ is called as a vacuum vector and the map 
$Y : V  \rightarrow \mathrm{End} \,\, V [[z,z^{-1}]]$ is called state-field correspondence. The following identities which can be derived from Borcherds identity and properties of translation operator play an important role in the theory of vertex algebras:
$$Y(a_{(-1)}b, z) =\, : Y(a,z) Y(b,z):,$$
$$Y(a_{(-s-1)}b, z) = \frac{1}{s!}:\Big( \frac{\partial^s}{\partial z^{s}} Y(a,z)\Big) Y(b, z):, a ,b \in V, s \geq 0 ,$$
where the expression 
$:Y(c,z) Y(d,z):$ for $c, d \in V$ is called the normal ordered product of fields $Y(c,z)$ and $Y(d, z)$ and is given by the following expression
$$ \sum_{n < 0}{c_{(n)}z^{-n-1} Y(d,z)} + \sum_{n \geq 0}{Y(c,z) d_{(n)} z^{-n-1}}.$$

Let $V_1$ and $V_2$ be vertex algebras, then the tensor product $V_1 \otimes V_2$ also has a vertex algebra structure, with 
$|0 \rangle \otimes |0 \rangle$ and 
$T \otimes I + I \otimes T$ play role of vacuum vector and translation operator, respectively. The state field correspondence is
given by 
$$Y(a \otimes b, z) = Y(a, z) \otimes Y(b, z) = \sum_{p, q \in \mathbb{Z}}{a_{(p)} \otimes b_{(q)} z^{-p- q -2}}.$$
In terms of modes we have the following expression
$$(a \otimes b)_{(m)} = \sum_{p \in \mathbb{Z}} a_{(p)} \otimes b_{(m- p -1)}.$$ 
\subsection{Heisenberg vertex algebras}
Let $\mathscr{H}$ be a finite dimensional vector space with a symmetric non-degenerate bi-linear form. The Lie algebra $\overline{\mathscr{H}}\coloneqq \mathscr{H} \otimes \mathbb{C}[t, t^{-1}] \oplus \mathbb{C} K$ with brackets
$$[h_1 \otimes t^m, h_2 \otimes t^{n}] = m(h_1 | h_2)\delta_{m, -n}K,\,\, [\overline{\mathscr{H}}, K] = 0,\,\, h_1, h_2 \in \mathscr{H}$$ is called a Heisenberg Lie algebra. The Fock space of $\overline{\mathscr{H}}$ is defined as $$\mathscr{F} \coloneqq \mathrm{Ind}_{ \mathscr{H}[t] \oplus \mathbb{C}K}^{\overline{\mathscr{H}}}\mathbb{C},$$ where $\mathscr{H}[t]$ acts trivially on 
$\mathbb{C}$ and $K$ acts as identity. The space $\mathscr{F}$ has the vertex algebra structure where the highest weight vector $\bf{1}$ plays the role of a vacuum vector. The state-field correspondence is given by
$$Y(h \otimes t^{-1}, z) = \sum_{k \in \Z}{h_{(k)} z^{-m-1}},\,\,\, \mathrm{where} \,\,\,  h \in \mathscr{H}, h_{(k)}= h \otimes t^{k}. $$
\subsection{Lattice vertex algebras}\label{subl}
 Let $\mathbb{M}$ be an integral lattice of rank $s$ with a non-degenerate symmetric bi-linear form $(\cdot| \cdot)$. Let us assume that $\mathbb{M}$ is an even lattice, i.e., $(h|h) \in 2 \mathbb{Z}$ for all $h \in \mathbb{M}$. Let 
$\mathscr{H} = \mathbb{C} \otimes_{\mathbb{Z}} \mathbb{M}$, and let $\overline{\mathscr{H}}$ be the Heisenberg Lie algebra. Consider the Fock space representation $\mathscr{F}$ of $\overline{\mathscr{H}}$. Let $\varepsilon: \mathbb{M} \times \mathbb{M} \rightarrow \{\pm 1\}$ be a bi-multiplicative cocycle with the property $\varepsilon(\alpha, \beta) \varepsilon(\beta, \alpha) = (-1)^{( \beta \mid \alpha)}.$ Let $\C_{\varepsilon} [\mathbb{M}]$ be the twisted group algebra
with multiplication $e^{\alpha}e^{\beta} = \varepsilon(\alpha, \beta) e^{\alpha + \beta}$. The lattice vertex algebra $V_{Q}$ defined as 
$$V_{\mathbb{M}} = \mathscr{F} \otimes_{\C}{\C_{\varepsilon} [\mathbb{M}]}.$$ 
The action of $\overline{\mathscr{H}}$ on $V_{\mathbb{M}}$ is given by $h_{(n)} (\eta \otimes e^{\beta}) = (h_{(n)}\eta+\delta_{n, 0} ( h |\beta )\eta) \otimes e^{\beta}$ for $h \in \overline{\mathscr{H}}$, $n \in\Z$, $\eta\in\mathscr{F}$ and $\beta\in \mathbb{M}$.
The state-field correspondence is given by:
$$Y( h_{(-1)} \otimes 1, z) = \sum_{p \in \mathbb{Z}} h_{(p)} z^{-p-1},$$
 and
$$Y (1 \otimes e^{\beta}, z) = e^{\beta} z^{\beta_0} \mathrm{exp}\big( -\sum_{n <0}{\beta_{(n)}\frac{z^{-n}}{n}} \big)\mathrm{exp}\big( -\sum_{n >0}{\beta_{(n)}\frac{z^{-n}}{n}} \big), $$
$h \in \overline{\mathscr{H}}$ and $\beta \in \mathbb{M}$. In the above expression, $e^{\beta}$ is the left multiplication operator by $1 \otimes e^\beta$ and $z^{\beta_0}$ acts as $z^{\beta_0}(\eta \otimes e^{\alpha}) = z^{(\beta | \alpha)} (\eta \otimes e^{\beta}),$
where $\eta \in \mathscr{F}$ and $\alpha\in \mathbb{M}$.

\subsection {Twisted representations of vertex algebras}
An invertible linear map $S$ from a vertex algebra $V$ to itself is called an automorphism if $S (|0\rangle) = |0\rangle$ and $S(a_{(n)} b) = S(a)_{(n)} S(b)$.
Suppose that $S$ is of finite order, i.e., $S^{N} = I$ for some $N \in \mathbb{N}$. A vector space $M$ is called a $S$-twisted  representation  of $V$ if there exists a linear map
$Y_{M}: V \rightarrow \mathrm{(End}\,\, M)[[z^{\frac{1}{N}}, z^{\frac{-1}{N}}]] ,$
$$Y_{M} (a, z) = \sum_{s \in \frac{1}{N} \mathbb{Z}}{a_{(s)}^{M} z^{-s-1}}, a \in V,$$
which satisfies the following axioms:
\begin{enumerate}
\item $a^{M}_{(s)}.v = 0$ for $s >>0$ for all $a \in V$ and for all $v \in M$.
\item $Y_{M} (|0\rangle, z)$ is identity on $M$ and $Y_{M} (Sa, z) = Y_{M} (a, e^{2 \pi i}z)$.
\item Twisted Borcherds identity:
$$\sum_{s= 0}^{\infty}{(-1)^s \binom {n} {s} \left( a^{M}_{(p +n -s)} (b^{M}_{(q+s)}c )- (-1)^{n+s} b^{M}_{(q+n-s)}(a^{M}_{(p+s)}c) \right)} = \sum_{s= 0}^{\infty}{\binom {p} {s}(a_{(n+s)}b)^{M}_{(p+q-s)} c},$$
for all $a, b \in V, c \in M, n \in \mathbb{Z}, p , q  \in \frac{1}{N} \mathbb{Z}$, provided that $S (a) = e^{-2 \pi p z }a$.
 \end{enumerate}
\subsection{}
 
\noindent
 Now let $\mathfrak{g}$ be a finite dimensional simple Lie algebra of type $A, D$ or $E$. Let $R,\,\, Q= \mathbb{Z}R,\,\, \mathfrak{h} = \mathbb{C}\otimes_{\mathbb{Z}}Q$ denote set of roots, root lattice and Cartan subalgebra of $\mathfrak{g}$, respectively. Let $(\cdot, \cdot)$ be the normalised Killing form of
 $\mathfrak{g}$. So we have $(\alpha | \alpha) =2$ for all $\alpha \in R$.
 Let $\overline{\mathfrak{h}} = \mathfrak{h}\otimes \mathbb{C}[t, t^{-1}] \oplus 
 \mathbb{C} K$ be Heisenberg algebra and $\mathscr{F}$ be its Fock space representation. Let $V_{Q}\coloneqq \mathscr{F} \otimes_{\mathbb{C}} \mathbb{C}_{\varepsilon}[Q]$ be the lattice vertex algebra associated with $Q$. 
 We have the following important result from \cite{BKIRK} (Corollary 3.3) :
\begin{prop} \label{impr}
Let $M$ be a any $\sigma$-twisted module of $V_{Q}$. Then the span of modes of the fields $Y_M(h, z)$, $h \in \mathfrak{h}$ and $Y_M(e^{\alpha}, z)$, $\alpha \in R$ form a representation of $\hat{\mathfrak{g}}'(\mu^{-1})$.
\end{prop}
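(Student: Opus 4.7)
The plan is to combine the Frenkel--Kac vertex operator construction with a direct application of the twisted Borcherds identity. First, I would recall that in the untwisted setting the modes of $Y(h,z)$ and $Y(e^{\alpha},z)$ for $h\in\mathfrak{h}$, $\alpha\in R$ close up to a copy of the untwisted affine algebra $\hat{\mathfrak{g}}'$; this is exactly the Frenkel--Kac theorem. Concretely, one computes from the explicit formulas for $Y(h_{(-1)}\otimes 1,z)$ and $Y(1\otimes e^{\alpha},z)$ the singular parts of the OPE's
$$Y(h_1,z)Y(h_2,w)\sim\frac{(h_1|h_2)}{(z-w)^2},\qquad Y(h,z)Y(e^{\alpha},w)\sim\frac{(h|\alpha)}{z-w}Y(e^{\alpha},w),$$
and the analogous $Y(e^{\alpha},z)Y(e^{\beta},w)$ formula whose singular part is controlled by the cocycle $\varepsilon(\alpha,\beta)$ together with whether $\alpha+\beta\in R\cup\{0\}$. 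These OPE's translate into the bracket relations of $\hat{\mathfrak{g}}'$ on the modes.

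Second, for a $\sigma$-twisted module $M$ the axiom $Y_M(\sigma a, z)=Y_M(a,e^{2\pi i}z)$ forces $Y_M(a,z)$ to have $z$-modes in $\tfrac{p}{N}+\Z$ whenever $\sigma a=\xi^{-p}a$, where $\xi=e^{2\pi i/N}$ and $N$ is the order of $\sigma$. I would decompose the generating set $\mathfrak{h}\cup\{e^{\alpha}\}$ into $\sigma$-eigenspaces; because $\sigma$ is the canonical lift of the diagram automorphism $\mu$ to $V_Q$, these eigenspaces are precisely the spaces $\mathfrak{g}_j$ (in appropriate conventions) that enter the description of $\hat{\mathfrak{g}}'(\mu^{-1})$.

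Third, I would feed these eigenvectors into the twisted Borcherds identity with $n=0$, letting $p,q$ range over the allowed fractional indices. The singular parts computed in step one produce exactly the finite sums that appear on the right-hand side of twisted Borcherds, and the outcome is that the commutators of the twisted modes reproduce the defining brackets of $\hat{\mathfrak{g}}'(\mu^{-1})$; the sign flip $\mu\leftrightarrow\mu^{-1}$ arises from the convention in the twisted-module axiom, which makes a twisted mode pick up the inverse $\sigma$-eigenvalue. The central element $K$ is recovered from the fact that $|0\rangle_{(-1)}=\mathrm{id}$ in $V_Q$, so its twisted avatar acts as the identity on $M$.

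The main obstacle will be the bookkeeping in the case $a=e^{\alpha}$, $b=e^{\beta}$ with $\alpha+\beta\in R\cup\{0\}$: one must track the formal exponentials $\exp\bigl(\mp\sum_{n\ne 0}\alpha_{(n)}z^{-n}/n\bigr)$ together with the $z^{\alpha_0}$ factor, and verify that when the fractional indices are inserted the resulting structure constants match those of $\mathfrak{g}_j\otimes t_1^j\C[t_1^{\pm N}]$. Reconciling the $2$-cocycle $\varepsilon$ with the $\mathfrak{g}$-structure constants in the presence of the twist, and confirming that the central term comes out with the normalization $\tfrac{1}{N}\mathrm{Res}(t^{-1}P_1P_2)(x|y)$ prescribed for $\hat{\mathfrak{g}}(\mu)$ in Subsection \ref{sec3}, is the delicate part.
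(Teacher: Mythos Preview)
The paper does not actually prove this proposition: it is quoted verbatim as Corollary~3.3 of \cite{BKIRK} and used as a black box. Your outline is therefore not being compared against any argument in the present paper, but it is a correct sketch of the standard proof (and is essentially the argument carried out in \cite{BKIRK}): one first verifies the Frenkel--Kac OPE's in $V_Q$, then specializes the twisted Borcherds identity with $n=0$ to transport these OPE's to commutator relations among the fractional modes on any $\sigma$-twisted module, and finally matches the resulting relations with the defining brackets of $\hat{\mathfrak{g}}'(\mu^{-1})$. Your identification of the delicate point---checking that the cocycle $\varepsilon$ and the central normalization survive the twist correctly---is accurate; this is handled in \cite{BKIRK} by a careful choice of lift of $\mu$ to an automorphism of $V_Q$ compatible with $\varepsilon$, which you have implicitly assumed but not made explicit.
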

 
 Recall that $\mathrm{span}_\C\{h\otimes t^{-1}\otimes 1, 1\otimes e^{\alpha}: h\in\mathfrak{h},\alpha\in R \}\subseteq V_Q$ is isomorphic to the Lie algebra $\mathfrak{g}$ with the assignments $h\otimes t^{-1}\otimes 1\mapsto h$ and $1\otimes e^{\alpha}\mapsto e_\alpha\in \mathfrak{g}_{\alpha}$ for all $h\in\mathfrak{h}$ and $\alpha\in R$. For any $a\in\mathfrak{g}$, $Y(a,z)$ will always mean the field of an element of $V_Q$ that corresponds to $a$ under the above assignments.
 Let $\mu$ be the Dynkin diagram automorphism of $\mathfrak{h}$ of order $r$. 
 Using the results in \cite{BBVK, TKVJ} and Proposition \ref{impr} we have the following fact:
 \begin{itemize}
\item  There exists a $\mu$-twisted module $M$ of $V_{Q}$ which considered as $\hat{\mathfrak{g}}(\mu^{-1})$-module is isomorphic to the basic representation $L(\Lambda_0)$. The action is given by the map $\sigma: \hat{\mathfrak{g}}(\mu^{-1}) \rightarrow \mathrm{End} (M)$ with the assignments
$$a^{(j)} \otimes t^{mr + j} \mapsto  a^{(j)}_{(\frac{{mr + j}}{r})}, a^{(j)} \in \mathfrak{g}_j$$
$$K \mapsto \frac{1}{r}\mathrm{Id} ,$$
$$d \mapsto - r L_0,$$
where for dual bases $u_i$ and $u^{i}$, $i\in I_0$ of 
$\mathfrak{h}_{0}$, the operator $L_0$ is defined as follows
$$L_0 \coloneqq \sum_{i = 1}^{l}{\Big( \frac{1}{2}{u_{i}}_{(0)}u^{i}_{(0)} + \sum_{n \in \mathbb{Z}_{>0}}{u_{i}}_{(-n)}u^{i}_{(n)}}\Big).$$
 
\end{itemize}
 \subsection{Lattice vertex algebras in  toroidal setup}
 Let $\Gamma$ be an integral lattice of rank $2n-2$ spanned by $\delta_2, \ldots, \delta_n, \Lambda_2, \ldots, \Lambda_n$ with a symmetric nondegenerate bilinear form 
 $\langle ,\,\, \rangle$ with values: $$\langle \delta_i, \delta_j \rangle = 0 = \langle \Lambda_i, \Lambda_j \rangle,\,\, \langle \delta_1, \Lambda_j \rangle = \delta_{i, j} .$$ Let $\mathfrak{p} = \mathbb{C} \otimes_{\mathbb{Z}} \Gamma$ and
 $\overline{\mathfrak{p}} = \mathfrak{p} \otimes \mathbb{C}[t, t^{-1}] \oplus \mathbb{C} c$ be the Heisenberg Lie algebra and let $S(t^{-1}\mathfrak{p}[t^{-1}])$ be the Fock space representation of $\overline{\mathfrak{p}}$. For $h \in \mathfrak{p}$ the action of $h \otimes t^{-s}, s>0$ is given by the multiplication by $h \otimes t^{-s}$, $c$ acts as identity, and  $h \otimes t^0$ acts trivially,
 $h \otimes t^{s}, s>0$ acts as a derivation given by $\beta\otimes t^{-m} \mapsto s\delta_{s, k} \langle h, \beta\rangle, m >0$.
 Let $\varepsilon : \Gamma \times \Gamma
 \rightarrow  {\pm 1}$ be a bimultiplicative function with values 
 $$\varepsilon (\delta_i, \Lambda_i) = -1$$ and values on all other generators equal to $1$.
 
 Let $V_{\Gamma}\coloneqq S(t^{-1}\mathfrak{p}[t^{-1}]) \otimes \mathbb{C}_{\varepsilon}[\Gamma]$. The action of $\overline{\mathfrak{p}}$ extends to $V_{\Gamma}$ as follows:
 for $h \in \mathfrak{p}, \beta \in \Gamma, s\in \mathbb{Z}, s \neq 0,$
 $$h\otimes t^{s}( u \otimes e^{\beta}) = (h \otimes t^{s}.u) \otimes e^{\beta};$$ 
 $$h \otimes t^{0}(u \otimes e^{\beta}) = \langle h, \beta \rangle u \otimes e^{\beta}. $$
 Let for ${\bf{q}}= (q_2, \ldots, q_n)$, ${\bf{q}} \delta \coloneqq q_2 \delta_2 + \cdots +q_n  \delta_n$ and ${\bf{q}} \Lambda \coloneqq q_2 \Lambda_2 + \cdots +q_n  \Lambda_n$.
 Let 
 $$Y(u \otimes t^{-1} \otimes 1, z) = \sum_{k \in \mathbb{Z}}{(u \otimes t^n)z^{-n-1}}$$
 $$Y (1 \otimes e^{{\bf{q}} \delta}, z) = e^{{\bf{q}} \delta} z^{({\bf{q}} \delta)_{ (0)}} \mathrm{exp}\big( -\sum_{n <0}{\frac{z^{-n}}{n} ({\bf{q}} \delta)_{(n)}} \big)\mathrm{exp}\big( -\sum_{n >0}{\frac{z^{-n}}{n} ({\bf{q}} \delta)_{(n)}} \big) 
 = \sum_{k \in \mathbb{Z}}{e^{{\bf{q}} \delta}_{(k)} z^{-k -1}}$$
 where $e^{{\bf{q}} \delta}$ and $z^{({\bf{q}} \delta)_{(0)}}$
defined similarly as in Subsection \ref{subl}. Similarly, $Y (1 \otimes e^{{\bf{q}} \Lambda}, z)$ is defined. The following expressions give the n-th product identities in $V_{\Gamma}$:
$$\begin{aligned} {e^{\bf{q}\delta}}_{(-1)}e^{\bf{p} \delta} &=e^{(\bf{q + p)} \delta},\,\,\,\,
{e^{\bf{q}\delta}}_{(-2)}e^{\bf{p} \delta} = ({{\bf{q}} \delta})_{(-1)}e^{(\bf{q + p}) \delta},\\ (\alpha_{(-1)}e^{\bf{q}\delta})_{(n)}(\beta_{(-1)}e^{\bf{p}\delta}) &= 0
\text{ for all } n \in \mathbb{Z}_{\geq 0}, \,\,\,\,\alpha, \beta \in \{1, \delta_2, \ldots, \delta_n\},\\
({\Lambda_i}_{(-1)} e^{\bf{q}\delta})_{(0)}e^{\bf{p} \delta} &= q_i e^{\bf{(q + p)} \delta},\,\,\,\, ({\Lambda_i}_{(-1)} e^{\bf{q}\delta})_{(n)}e^{\bf{p} \delta} = 0 \text{ for all } n \geq 1,\\ ({\Lambda_i}_{(-1)} e^{\bf{q}\delta})_{(0)}({\delta_j}_{(-1)}e^{\bf{p} \delta}) &=  p_i {\delta_j}_{(-1)}e^{(\bf{q + p}) \delta} + \delta_{i, j} ({\bf{q} \delta})_{(-1)}e^{\bf{(q + p)} \delta},\\
({\Lambda_i}_{(-1)} e^{\bf{q}\delta})_{(1)}({\delta_j}_{(-1)}e^{\bf{p} \delta}) &= \delta_{i, j}e^{\bf{(q + p)} \delta},\,\,\,\, ({\Lambda_i}_{(-1)} e^{\bf{q}\delta})_{(n)}({\delta_j}_{(-1)}e^{\bf{p} \delta}) = 0, \text{ for all }n \geq 2,\\
({\Lambda_i}_{(-1)} e^{\bf{q}\delta})_{(0)}({\Lambda_j}_{(-1)}e^{\bf{p} \delta})
&= (-q_j {\Lambda_{i}}_{(-1)} + p_i {\Lambda_{j}}_{(-1)} - p_i q_j {({\bf{q}}\delta})_{(-1)})e^{({\bf{q+p}})\delta},\\
({\Lambda_i}_{(-1)} e^{\bf{q}\delta})_{(1)}({\Lambda_j}_{(-1)}e^{\bf{p} \delta}) &= 
-p_iq_j e^{({\bf{q+p}})\delta}, \,\,\,\, ({\Lambda_i}_{(-1)} e^{\bf{q}\delta})_{(n)}({\Lambda_j}_{(-1)}e^{\bf{p} \delta}) =0  \text{ for all } n \geq 2.
\end{aligned}$$
\subsection{\texorpdfstring{Representation of $\mathcal{T}(\mu)$}{}}
 Let $\Gamma_1$ be a integral lattice of rank $n-1$ spanned by $\delta_2, \ldots, \delta_n$. Let $H_1 = \mathbb{C}
 \otimes_{\mathbb{Z}} \Gamma_1$ be the complex vector space, and let $\overline{H}_1 = H_1\otimes \C[t,t^{-1}] \oplus \mathbb{C} c$ be the Heisenberg Lie algebra associated with $H_1$. Let 
 $\mathscr{F}_1$ be the Fock space representation of $\overline{H}_1$. Let $V_{\Gamma_1} = \mathscr{F}_1 \otimes \mathbb{C}[\Gamma_1]$ be the lattce vertex algebra associated with $\Gamma_1$. 
 Let $\mathcal{M} \coloneqq \mathbb{C}[\delta_{2} (k), \ldots, \delta_{n}(k): k \in \mathbb{Z}_{< 0}]$ be a polynomial algebra freely generated with variables $\{\delta_i (k): 2 \leq i \leq n, k \in \mathbb{Z}_{< 0} \}$. Consider the Laurent polynomial ring
 in $n-1$ variables $\mathbb{C}[\tau_{2}^{\pm 1}, \ldots, \tau_{n}^{\pm 1}]$. Then it is easy to see that $V_{\Gamma_1} \cong \mathcal{M} \otimes \mathbb{C}[\tau_{2}^{\pm 1}, \ldots, \tau_{n}^{\pm 1}]$ as vector spaces. Let $d^{\mathcal{M}}$ denote the operator on $\mathcal{M}$ which measures degree on $\mathcal{M}$. For $2 \leq i \leq n,$ let $d^{(i)}$ denote the operator on 
 $\mathbb{C}[\tau_{2}^{\pm 1}, \ldots, \tau_{n}^{\pm 1}]$ which counts the degree of variable ${\tau_{i}}$.
 
 We have the following:
 \begin{thm}
  The space $\mathcal{G}\coloneqq M \otimes V_{\Gamma_1} = M \otimes \mathcal{M} \otimes \mathbb{C}[\tau_{2}^{\pm 1}, \ldots, \tau_{n}^{\pm 1}]$ admits a structure of $\mathcal{T}(\mu^{-1})$ representation via the map $\Psi: \mathcal{T}(\mu^{-1}) \rightarrow \mathrm{End}\,\, \mathcal{G}$ with the following assignments:
  for $a^{(i)} \in \mathfrak{g}_i,$
$$\begin{aligned}
\sum_{k \in \Z}a^{(i)} \otimes t_1^{rk +i} t^{\underline{m}} z^{-(rk +i) -1} &\mapsto \sum_{k \in \mathbb{Z}}{a^{(i)}_{(\frac{rk+i}{r})} z^{-(\frac{rk+i}{r}) -1}} \otimes Y(e^{\underline{m} \delta}, z) ;\\
    \sum_{k \in \mathbb{Z}}{ t_1^{rk} t^{\underline{m}}K_1 z^{-k}}
  &\mapsto  \frac{1}{r}\mathrm{Id} \otimes Y(e^{\underline{m}\delta}, z);\\
  \sum_{k \in \mathbb{Z}}{ t_1^{rk} t^{\underline{m}}K_i z^{-k}}
  &\mapsto  \mathrm{Id} \otimes  Y({\delta_{i}}_{(-1)}e^{\underline{m}\delta}, z) ,\, 2 \leq i \leq n;\\
d_1&\mapsto  r(-L_0 \otimes \mathrm{Id}  +  \mathrm{Id} \otimes  d^{\mathcal{M}}) ;\\
d_i &\mapsto  \mathrm{Id}\otimes d^{(i)},\, 2 \leq i \leq n .
  \end{aligned}$$
  \end{thm}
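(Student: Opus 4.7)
The overall strategy is to realize $\Psi$ as the mode expansion of composite vertex-operator fields acting on $\mathcal{G}$, and then verify the defining brackets of $\mathcal{T}(\mu^{-1})$ case by case. For the $M$-factor, the twisted $V_Q$-module structure together with the fact cited after Proposition \ref{impr} already produces the $\hat{\mathfrak{g}}(\mu^{-1})$-action by the modes $a^{(i)}_{(\frac{rk+i}{r})}$, and the assignment $d_1 \mapsto -rL_0 \otimes \mathrm{Id} + \mathrm{Id}\otimes r d^{\mathcal{M}}$ extends this action through the Sugawara identity $[L_0, a^{(i)}_{(s)}] = -s\, a^{(i)}_{(s)}$. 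Thus the subalgebra $\mathcal{T}_{\mathrm{aff}}(\mu^{-1})\subset \mathcal{T}(\mu^{-1})$ is already handled; the remaining content is the extension to the toroidal generators $a^{(i)}\otimes t_1^{rk+i}t^{\underline m}$ for $\underline m\neq\underline 0$ and the central elements $t^{\mathbf s}K_i$.

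To this end I will introduce the composite fields
\[
X^{a^{(i)},\underline m}(z):=Y_M(a^{(i)},z)\otimes Y(1\otimes e^{\underline m\delta},z),\quad
X^{K_1,\underline m}(z):=\tfrac{1}{r}\mathrm{Id}\otimes Y(e^{\underline m\delta},z),
\]
\[
X^{K_i,\underline m}(z):=\mathrm{Id}\otimes Y(\delta_{i,(-1)}e^{\underline m\delta},z),\quad i\geq 2,
\]
whose modes are exactly the operators prescribed by $\Psi$. To verify that $\Psi$ is a Lie algebra homomorphism it then suffices to compare, on one hand, the toroidal brackets $[a\otimes t_1^p t^{\underline m},\,b\otimes t_1^q t^{\underline n}]$, $[a\otimes t_1^p t^{\underline m},\,t^{\underline n}K_j]$, $[d_j,\cdot]$, etc., with, on the other hand, the commutators of the corresponding composite-field modes, which are computed via the twisted Borcherds identity in the $M$-factor (formula $[a^M_{(s)},b^M_{(t)}]=\sum_{p\geq 0}\binom{s}{p}(a_{(p)}b)^M_{(s+t-p)}$) combined with the explicit $n$-th product formulas in $V_{\Gamma_1}$ already listed in Subsection~6.5. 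The leading $(p=0)$ term in the first factor contributes $[a,b]\otimes\cdots$; the next term, using $(a|b)K$ from the vertex-algebra structure on $V_Q$, contributes the $K_1$-central part weighted by $(rk+i)/r$, which after multiplication by $\tfrac{1}{r}\mathrm{Id}$ correctly yields the $p\,t^{\mathbf s}K_1$ toroidal term; and the $n$-th products $({\Lambda_i}_{(-1)}e^{\mathbf q\delta})_{(k)}(e^{\mathbf p\delta})$ and $(\delta_{i,(-1)}e^{\mathbf q\delta})_{(k)}(\cdots)$ from $V_{\Gamma_1}$ produce exactly the $m_i\, t^{\mathbf s}K_i$ terms.

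The main obstacle is the consistency of the assignment with the Kähler-differential relations $\sum_{i=1}^n s_i\, t^{\mathbf s}K_i=0$ that define $\mathcal Z'$, and with the toroidal cocycle condition. This reduces to the identity
\[
T(1\otimes e^{\underline m\delta})=(\underline m\delta)_{(-1)}(1\otimes e^{\underline m\delta})=\sum_{i=2}^n m_i\,\delta_{i,(-1)}(1\otimes e^{\underline m\delta})
\]
in $V_{\Gamma_1}$, which, combined with the translation property $(Ta)_{(p)}=-p\,a_{(p-1)}$ and the prefactor $\tfrac{1}{r}$ in $\Psi(t_1^{rk}t^{\underline m}K_1)$, forces the exact-form combinations $\sum_i s_i\,\Psi(t^{\mathbf s}K_i)$ to vanish (after the appropriate shift of mode index); checking this cleanly across all three types of central-extension contributions (twisted-affine $K_1$ term, toroidal $K_1$ term with $m_1\neq 0$, and $K_i$ for $i\geq 2$) is the technical heart of the argument.

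Once this consistency is in place, the remaining brackets involving $d_i$ are routine: for $d_1$, the Virasoro commutator $[L_0, Y_M(a,z)]=(z\partial_z+\Delta)Y_M(a,z)$ together with the $\overline{H}_1$-grading operator $d^{\mathcal{M}}$ tracked by the $t_1$-powers gives the desired eigenvalues, while for $d_i$ with $i\geq 2$ one uses that $d^{(i)}$ acts as $m_i$ on $e^{\underline m\delta}$ and hence produces the derivation $[d_i, x\otimes t^{\mathbf m}]=m_i(x\otimes t^{\mathbf m})$ and $[d_i,t^{\mathbf m}K_j]=m_i t^{\mathbf m}K_j$. The Lie algebra homomorphism property follows once all the cases above are verified, and the resulting $\mathcal T(\mu^{-1})$-module structure on $\mathcal G$ is precisely the one predicted by the formula for $\Psi$.
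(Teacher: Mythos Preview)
Your overall strategy is sound and will work, but it differs from the paper's argument in an important structural way. The paper does \emph{not} carry out the full bracket verification directly: after rewriting the assignments in mode form, it observes that all the brackets
\[
[\Psi(x^{(j)}\otimes t_1^{rs+j}t^{\underline m}),\Psi(y^{(k)}\otimes t_1^{rl+k}t^{\underline n})],\quad
[\Psi(t_1^{rs}t^{\underline m}K_i),\Psi(\cdot)],\quad
[\Psi(d_i),\Psi(\cdot)]\ (i\ge 2)
\]
follow verbatim from Theorem~4.2 of \cite{BKIRK}, and then explicitly verifies only the two $d_1$-identities (using $[L_0,a_{(s)}]=-s\,a_{(s)}$ and the grading operator $d^{\mathcal M}$). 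Your route---direct use of the twisted Borcherds commutator formula together with the listed $n$-th products in $V_{\Gamma_1}$---is more self-contained and recovers the same result without invoking that external theorem; it also makes transparent the well-definedness check on the K\"ahler-differential relations $\sum_i s_i\Psi(t^{\mathbf s}K_i)=0$, which the paper leaves implicit inside the cited result.

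One correction: the products $({\Lambda_i}_{(-1)}e^{\underline q\delta})_{(k)}(e^{\underline p\delta})$ you invoke are \emph{not} available in $V_{\Gamma_1}$, since $\Lambda_i\notin\Gamma_1$; those identities live in the larger $V_\Gamma$ and are irrelevant here. The $K_i$-terms for $i\ge 2$ in the bracket $[\Psi(x\otimes t_1^{p}t^{\underline m}),\Psi(y\otimes t_1^{q}t^{\underline n})]$ arise instead from the contribution $a_{(1)}b=(a|b)\,|0\rangle$ in the $V_Q$-factor paired with
\[
(e^{\underline m\delta})_{(-2)}(e^{\underline n\delta})
=(\underline m\delta)_{(-1)}e^{(\underline m+\underline n)\delta}
=\sum_{i=2}^{n} m_i\,{\delta_i}_{(-1)}e^{(\underline m+\underline n)\delta}
\]
in the $V_{\Gamma_1}$-factor. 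With that replacement your case-by-case check goes through exactly as you outline, and your handling of the exact-form relation via $T(e^{\underline m\delta})=(\underline m\delta)_{(-1)}e^{\underline m\delta}$ and $(Ta)_{(p)}=-p\,a_{(p-1)}$ is correct.
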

 \begin{proof}
 The above assignments can be rewritten as follows:
 for $a^{(i)} \in \mathfrak{g}_i$,
 $$a^{(i)} \otimes t_1^{rs+i} t^{\underline{m}} \mapsto  (a^{(i)} \otimes e^{\underline{m}\delta} )_{\big(\frac{rs+i}{r} \big)},$$
 where $$(c \otimes d)_{(\frac{k}{l})} \coloneqq \sum_{p \in \mathbb{Z}}{c_{(\frac{k}{l} -p -1)} \otimes d_{(p)}},$$
 $$t_1^{rs} t^{\underline{m}}K_1  \mapsto 
 \frac{1}{r}|0\rangle_{(-1)} \otimes e^{\underline{m} \delta}_{(s-1)},$$
$$t_1^{rs} t^{\underline{m}}K_i  \mapsto  
 |0\rangle_{(-1)} \otimes ({\delta_{i}}_{(-1)}e^{\underline{m} \delta})_{(s)} ,\,\,\, 2 \leq i \leq n.$$
 Now we need to show that $\Psi$ preserves the brackets in $\mathcal{T}(\mu^{-1})$. More precisely:
 $$\begin{aligned}
\Psi[x^{(j)} \otimes t_1^{rs + j}t^{\underline{m}}, y^{(k)} \otimes t_1^{rl+k}t^{\underline{m}}] &= [\Psi (x^{(j)} \otimes t_1^{rs + j}t^{\underline{m}}), \Psi(y^{(k)} \otimes t_1^{rl+k}t^{\underline{m}})],\\
 \Psi[t_1^{rs} t^{\underline{m}}K_i, x^{(j)} \otimes t_1^{rl + j}t^{\underline{m}} ] &= [\Psi(t_1^{rs} t^{\underline{m}}K_i), \Psi(x^{(j)} \otimes t_1^{rl + j}t^{\underline{m}})], \\
 \Psi[d_i, x^{(j)} \otimes t_1^{rs + j}t^{\underline{m}}] &= 
 [\Psi(d_i), \Psi(x^{(j)} \otimes t_1^{rs + j}t^{\underline{m}})],\\
\Psi[d_i,  t_1^{rs}t^{\underline{m}}K_l] &= [\Psi(d_i),  \Psi(t_1^{rs}t^{\underline{m}}K_l)],\end{aligned}$$
where $0 \leq j,k \leq r-1,\,1 \leq i,l \leq n.$
 But we will be done if we show that 
$$ \begin{aligned}\Psi[d_1, x^{(j)} \otimes t_1^{rs + j}t^{\underline{m}}] &= 
 [\Psi(d_1), \Psi(x^{(j)} \otimes t_1^{rs + j}t^{\underline{m}})],\\
 \Psi[d_1,  t_1^{rs}t^{\underline{m}}K_i] &= [\Psi(d_1),  \Psi(t_1^{rs}t^{\underline{m}}K_i)], 1 \leq i \leq n,\end{aligned}$$ as the proof for all other identities follows from Theorem 4.2 of \cite{BKIRK}.
 Hence we obtain  $$\begin{aligned} [\Psi(d_1),& \Psi(x^{(j)} \otimes t_1^{rs + j}t^{\underline{m}})] = [r(-L_0 \otimes \mathrm{Id}  +  \mathrm{Id} \otimes  d^{\mathcal{M}}), (x^{(j)} \otimes e^{\underline{m} \delta})_{\frac{rs+j}{r}}]\\
 &= \Big[r(-L_0 \otimes \mathrm{Id}  +  \mathrm{Id} \otimes  d^{\mathcal{M}}), \sum_{p \in \mathbb{Z}}{x^{(j)}_{(\frac{rs+j}{r} - p -1)}\otimes e^{\underline{m}\delta}_{(p)}} \Big]\\
 &= \sum_{p \in \mathbb{Z}}{r\Big(\frac{rs+j -rp -r}{r}\Big) x^{(j)}_{\frac{rs+j}{r} - p - 1}}\otimes e^{\underline{m}\delta}_{(p)} + 
 \sum_{p \in \mathbb{Z}}{x^{(j)}_{(\frac{rs+j}{r} - p -1)}}\otimes r(p+1)e^{\underline{m}\delta}_{(p)}\\
 &= (rs+j)\sum_{p \in \mathbb{Z}}{x^{(j)}_{(\frac{rs+j}{r} - p -1)}\otimes e^{\underline{m}\delta}_{(p)}}\\
 &= (rs+ j)(x^{(j)} \otimes e^{\underline{m} \delta})_{(\frac{rs+j}{r})}\\
 &=\Psi[d_1, x^{(j)} \otimes t_1^{rs + j}t^{\underline{m}}].
 \end{aligned}$$
 Similarly,
 $$
 \begin{aligned}\relax [\Psi(d_1),  \Psi(t_1^{rs}t^{\underline{m}}K_1)] &= 
 \Big[r(-L_0 \otimes \mathrm{Id}  +  \mathrm{Id} \otimes  d^{\mathcal{M}}),\frac{1}{r}|0\rangle_{(-1)} \otimes e^{\underline{m} \delta}_{(s-1)} \Big] \\
 &= \frac{1}{r}|0\rangle_{(-1)} \otimes rs \,(e^{\underline{m} \delta}_{(s-1)}) = s(|0\rangle_{(-1)} \otimes e^{\underline{m} \delta}_{(s-1)})\\
 &= rs(\frac{1}{r}|0\rangle_{(-1)} \otimes e^{\underline{m} \delta}_{(s-1)}) = \Psi[d_1,  t_1^{rs}t^{\underline{m}}K_1].
 \end{aligned}$$
 Lastly, for $2 \leq i \leq n $, we obtain
$$ [\Psi(d_1),  \Psi(t_1^{rs}t^{\underline{m}}K_i)]= 
\Big[r(-L_0 \otimes \mathrm{Id}  +  \mathrm{Id} \otimes  d^{\mathcal{M}}), |0\rangle_{(-1)} \otimes ({\delta_{i}}_{(-1)}e^{\underline{m} \delta})_{(s)}\Big]$$
$$= |0\rangle_{(-1)} \otimes rs ({\delta_{i}}_{(-1)}e^{\underline{m} \delta})_{(s)} = rs (|0\rangle_{(-1)} \otimes ({\delta_{i}}_{(-1)}e^{\underline{m} \delta})_{(s)}) = \Psi[d_1,  t_1^{rs}t^{\underline{m}}K_i].$$
 \end{proof}
 Let ${\bf{v}}$ denote the highest weight vector of $L(\Lambda_0)$, the basic representation of $\mathfrak{g}(\mu^{-1})$.
 
  \begin{thm}
  The map 
  $ \Phi: W_{\mathrm{glob}}^{\mu^{-1}}(\Lambda_0) \rightarrow \mathcal{G}$ defined by $\Phi(v_{\Lambda_0}) = {\bf{v}} \otimes 1 \otimes 1$ is 
  a surjective $\mathcal{T}(\mu^{-1})$-module map.
  \end{thm}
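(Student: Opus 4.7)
The strategy is twofold: first, verify that $\mathbf{v}\otimes 1\otimes 1$ satisfies every defining relation of $W_{\mathrm{glob}}^{\mu^{-1}}(\Lambda_0)$, which by the universal property produces the desired $\mathcal{T}(\mu^{-1})$-module map $\Phi$; then, show that $U(\mathcal{T}(\mu^{-1}))\cdot(\mathbf{v}\otimes 1\otimes 1)$ exhausts $\mathcal{G}$. The structural fact guiding both steps is that, on two distinguished families of generators, the representation $\Psi$ acts purely on one tensor factor: the centre generators $t_1^{rk}t^{\underline{m}}K_i$ act as $\mathrm{Id}$ on the first factor, and the generators $a^{(j)}\otimes t_1^{rk+j}$ with $\underline{m}=\underline{0}$ act as $\mathrm{Id}$ on the second factor, because $Y(|0\rangle,z)=\mathrm{Id}_{V_{\Gamma_1}}$ forces only the $p=-1$ term of $\sum_{p}a^{(j)}_{((rk+j)/r-p-1)}\otimes |0\rangle_{(p)}$ to survive.

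For well-definedness, when $\underline{m}=\underline{0}$ the restriction of $\Psi$ to $\mathcal{T}_{\mathrm{aff}}(\mu^{-1})$ coincides with the map $\sigma$ defining the basic representation on the first factor, so the affine highest-weight relations $e_{i,\underline{0}}\cdot\mathbf{v}=0$, $h\cdot\mathbf{v}=\Lambda_0(h)\mathbf{v}$ for $h\in\mathcal{T}_{\mathrm{aff}}(\mu)^0$, $f_i\cdot\mathbf{v}=0$ for $i\in I_0$, and $f_0^2\cdot\mathbf{v}=0$ transfer directly from $L(\Lambda_0)$. For $\underline{m}\neq\underline{0}$ I would use the normal-ordered product expansion of $Y(a^{(j)}\otimes e^{\underline{m}\delta},z)$ together with the identity $Y(e^{\underline{m}\delta},z)(1\otimes 1)=1\otimes e^{\underline{m}\delta}$ and the vanishing of every nonnegative Heisenberg mode $\delta_{i,(p)}$ on $1\otimes 1$ (which rests on $\langle\delta_i,\delta_j\rangle=0$); these vanishings force the nonnegative modes of the relevant fields on $1\otimes 1$ to vanish, giving $e_{i,\underline{m}}\cdot(\mathbf{v}\otimes 1\otimes 1)=0$ and $t^{\underline{m}}K_i\cdot(\mathbf{v}\otimes 1\otimes 1)=0$ for $2\leq i\leq n$. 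Finally, $d_i\cdot(\mathbf{v}\otimes 1\otimes 1)=0$ for $i\geq 2$ is immediate from $\Psi(d_i)=\mathrm{Id}\otimes d^{(i)}$ and $d^{(i)}(1\otimes 1)=0$.

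For surjectivity I proceed in two stages. In the first stage, the modes $t_1^{rk}t^{\underline{m}}K_1\mapsto \tfrac{1}{r}\mathrm{Id}\otimes e^{\underline{m}\delta}_{(k-1)}$ and $t_1^{rk}t^{\underline{m}}K_i\mapsto\mathrm{Id}\otimes(\delta_{i,(-1)}e^{\underline{m}\delta})_{(k)}$, applied iteratively to $\mathbf{v}\otimes 1\otimes 1$, produce every vector of the form $\mathbf{v}\otimes w$ with $w\in V_{\Gamma_1}$: taking $k=0$ in the first family yields the group-like elements $1\otimes e^{\underline{m}\delta}$, while taking $\underline{m}=\underline{0}$ in the second family with varying $k$ yields the Heisenberg creation modes $\delta_{i,(-s)}$ for $s>0$, and these two collections generate $V_{\Gamma_1}\cong\mathcal{M}\otimes\mathbb{C}[\tau_2^{\pm 1},\ldots,\tau_n^{\pm 1}]$ cyclically from the vacuum by the standard structure of a lattice vertex algebra. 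In the second stage, the elements $a^{(j)}\otimes t_1^{rk+j}$ act on $\mathbf{v}\otimes w$ as $a^{(j)}_{((rk+j)/r)}\mathbf{v}\otimes w$, and since $\mathbf{v}$ generates $L(\Lambda_0)$ over $\hat{\mathfrak{g}}(\mu^{-1})$ under $\sigma$, this sweeps out all of $L(\Lambda_0)\otimes w$, proving that $\mathcal{G}$ is generated by $\mathbf{v}\otimes 1\otimes 1$. The main technical care is concentrated in the first stage: a generic toroidal generator $a^{(j)}\otimes t_1^{rk+j}t^{\underline{m}}$ acts simultaneously on both tensor factors, so one must restrict to the subcollection of generators whose image isolates a single factor, and then verify that this smaller collection still reconstructs $V_{\Gamma_1}$ cyclically from its vacuum.
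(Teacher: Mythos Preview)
Your approach is essentially the same as the paper's: verify that $\mathbf{v}\otimes 1\otimes 1$ satisfies the defining relations of the global Weyl module, and invoke cyclicity for surjectivity. The paper simply asserts cyclicity in one sentence, whereas you supply the two-stage argument (centre generators build $\mathbf{v}\otimes V_{\Gamma_1}$, then the $\underline{m}=\underline{0}$ loop generators build $L(\Lambda_0)\otimes V_{\Gamma_1}$); this is a welcome addition and is correct.

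Two small points of imprecision. First, the identity $Y(e^{\underline{m}\delta},z)(1\otimes 1)=1\otimes e^{\underline{m}\delta}$ is not literally true: the left side is a power series in $z$ whose constant term is $e^{\underline{m}\delta}$, but it carries higher-order contributions from $\exp\bigl(\sum_{s>0}(\underline{m}\delta)_{(-s)}z^{s}/s\bigr)$. What you actually need, and what the paper uses directly, is the vacuum axiom $e^{\underline{m}\delta}_{(p)}|0\rangle=0$ for $p\geq 0$, together with ${e_i}_{(k)}\mathbf{v}=0$ for $k\geq 0$ in the basic representation; then each summand of $(e_i\otimes e^{\underline{m}\delta})_{(0)}=\sum_{p}{e_i}_{(-p-1)}\otimes e^{\underline{m}\delta}_{(p)}$ kills $\mathbf{v}\otimes|0\rangle$. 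Second, the phrase ``normal-ordered product expansion of $Y(a^{(j)}\otimes e^{\underline{m}\delta},z)$'' is a misnomer: this is a tensor product of fields on independent factors, so the mode expansion $(c\otimes d)_{(k/r)}=\sum_{p}c_{(k/r-p-1)}\otimes d_{(p)}$ involves no normal ordering. Normal ordering enters only for the $t^{\underline{m}}K_i$ relation, where the paper computes $Y({\delta_i}_{(-1)}e^{\underline{m}\delta},z)=\,:Y(\delta_i,z)Y(e^{\underline{m}\delta},z):$ inside the single factor $V_{\Gamma_1}$.
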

  \begin{proof}
  As the space $\mathcal{G}$ is a cyclic $\mathcal{T}(\mu^{-1})$-module with the cyclic vector ${\bf{v}} \otimes 1 \otimes 1$, it is enough to prove that the map $\Phi$ satisfies the defining relations of  $W_{\mathrm{glob}}^{\mu^{-1}}(\Lambda_0)$.
  For $i\in I_0,\, \underline{m} \in \mathbb{Z}^{n-1},$ consider
  $$\begin{aligned}
      e_{i}\otimes t^{\underline{m}}.({\bf{v}} \otimes 1 \otimes 1) &= (e_i \otimes e^{\underline{m}\delta})_{(0)}  ({\bf{v}} \otimes 1 \otimes 1)\\
  &= (\sum_{k \in \mathbb{Z}}{{e_i}_{(-k-1)}} \otimes e^{\underline{m}\delta}_{(k)})({\bf{v}} \otimes 1 \otimes 1) = 0  \end{aligned}$$
  as $e^{\underline{m}\delta}_{(k)}.1 = 0$ for all $k \in \mathbb{Z}_{\geq 0}$ and ${e_i}_{(k)}.{\bf{v}} = 0$ for all $k \in \mathbb{Z}_{\geq 0}$.
  By similar reasoning we have $e_0.({\bf{v}} \otimes 1 \otimes 1)=h_i.({\bf{v}} \otimes 1 \otimes 1)=f_0^{2}.({\bf{v}} \otimes 1 \otimes 1) = f_i.({\bf{v}} \otimes 1 \otimes 1) = 0$ for $i \in I_{0}$. Consider
  $$\begin{aligned}
      h_0.({\bf{v}} \otimes 1 \otimes 1)&=(-h^{(0)}_{\theta_s}+rK_1)({\bf{v}} \otimes 1 \otimes 1)\\
      &=|0\rangle_{(-1)}\otimes (1 \otimes 1)_{(-1)}({\bf{v}} \otimes 1 \otimes 1)={\bf{v}} \otimes 1 \otimes 1
      \end{aligned}$$ as $h^{(0)}_{\theta_s}.({\bf{v}} \otimes 1 \otimes 1)=0$. 
      
      For $2 \leq i \leq n,\,\, \underline{m} \in \mathbb{Z}^{n-1},$ consider
  $$\begin{aligned}
t^{\underline{m}}K_i.({\bf{v}}\otimes1\otimes1)&=|0\rangle_{(-1)}\otimes({\delta_i}_{(-1)}e^{\underline{m}}\delta)_{(-1)}({\bf{v}} \otimes 1 \otimes 1)\\
\mathrm{but} \,\, Y({\delta_i}_{(-1)}e^{\underline{m}}\delta,z) & =\, :Y(\delta_i,z)Y(e^{\underline{m}\delta},z):\\
 &= \sum_{k<0}{\delta_i}_{(k)}z^{-k-1}Y(e^{\underline{m}\delta},z)+ \sum_{k\geq0}Y(e^{\underline{m}\delta},z){\delta_i}_{(k)}z^{-k-1}.
\end{aligned}$$
Now as  ${\delta_i}_{(k)}.1 \otimes 1 =0$ for all $k \in \mathbb{Z}_{\geq 0}$ and $e^{\underline{m}\delta}_{(k)}.1 \otimes 1 = 0$ for all $k \in \mathbb{Z}_{\geq 0}$, we get that 
$$({\delta_i}_{(-1)}e^{\underline{m}}\delta)_{(-1)} (1 \otimes 1) = 0.$$

Lastly, for $2\leq i\leq n$, consider
$$d_i.({\bf{v}} \otimes 1 \otimes 1)=\mathrm{Id}\otimes d^{(i)}({\bf{v}} \otimes 1 \otimes 1)=0.$$
 \end{proof}
\subsection{\texorpdfstring{$\mathcal{G}$ as $A^{\mu^{-1}}(\Lambda_0)$-module}{}}
We have $$
\begin{aligned}
h_{0, \underline{m}}({\bf{v}} \otimes 1 \otimes 1) &= -h^{(0)}_{\theta_s} \otimes t^{\underline{m}} + r K_1 t^{\underline{m}}({\bf{v}} \otimes 1 \otimes 1)\\
&= r \frac{1}{r}(|0 \rangle_{(-1)} \otimes e^{\underline{m}\delta}_{(-1)})({\bf{v}} \otimes 1 \otimes 1) = 
{\bf{v}} \otimes 1 \otimes \tau^{\underline{m}}.
\end{aligned}$$
For $\underline{a} \in ({\mathbb{C}^{*})}^{n-1}$, define
$\mathcal{G}_{\underline{a}}\coloneqq \mathcal{G} \otimes_{A^{\mu^{-1}}(\Lambda_0)}\mathbb{C}_{\underline{a}}$. The following is an immediate
\begin{prop}
Let $\underline{a} \in ({\mathbb{C}^{*})}^{n-1}$. The $q_1$ character of 
$\mathcal{G}_{\underline{a}}$ is given by the formula
$$\mathrm{ch}_{q_1} (\mathcal{G}_{\underline{a}}) = \mathrm{ch}_{q_1} (L(\Lambda_0)) \left( \prod_{s>0}{\frac{1}{1-q_1^{s}}}\right)^{n-1},$$
where $L(\Lambda_0)$ is the level one irreducible integrable highest weight module of $\hat{\mathfrak{g}}(\mu^{-1})$.
\end{prop}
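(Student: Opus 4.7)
The plan is to identify $\mathcal{G}_{\underline{a}}$ as a $d_1$-graded vector space, factor the $q_1$-character across its tensor factors, and compute each factor separately.

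First, the calculation immediately preceding the proposition yields $h_{0,\underline{m}}\cdot ({\bf{v}}\otimes 1\otimes 1) = {\bf{v}}\otimes 1 \otimes \tau^{\underline{m}}$, and Theorem \ref{thm1} identifies $A^{\mu^{-1}}(\Lambda_0)$ with $\mathbb{C}[y_2^{\pm 1},\dots,y_n^{\pm 1}]$. I would use this to conclude that the right action of $A^{\mu^{-1}}(\Lambda_0)$ on the cyclic generator ${\bf{v}}\otimes 1\otimes 1$, and therefore on all of $\mathcal{G}$, corresponds (via $y_i \leftrightarrow \tau_i$) to right multiplication on the third tensor factor of $\mathcal{G} = L(\Lambda_0)\otimes \mathcal{M}\otimes \mathbb{C}[\tau_2^{\pm 1},\dots,\tau_n^{\pm 1}]$. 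Tensoring over $A^{\mu^{-1}}(\Lambda_0)$ with the one-dimensional module $\mathbb{C}_{\underline{a}}$ then collapses this third factor to $\mathbb{C}$, producing a $d_1$-graded vector space isomorphism $\mathcal{G}_{\underline{a}} \cong L(\Lambda_0)\otimes \mathcal{M}$.

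Next, by the preceding theorem $d_1$ acts on $L(\Lambda_0)\otimes V_{\Gamma_1}$ as $r(-L_0\otimes \mathrm{Id} + \mathrm{Id}\otimes d^{\mathcal{M}})$, and since the factor $\mathbb{C}[\tau^{\pm 1}]$ is annihilated by $d_1$, the induced action on $\mathcal{G}_{\underline{a}} \cong L(\Lambda_0)\otimes \mathcal{M}$ is the same sum of two commuting operators acting on the respective tensor factors. The $q_1$-character therefore factors multiplicatively:
$$\mathrm{ch}_{q_1}(\mathcal{G}_{\underline{a}}) = \mathrm{ch}_{q_1}(L(\Lambda_0))\cdot \mathrm{ch}_{q_1}(\mathcal{M}).$$

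Finally, I would compute $\mathrm{ch}_{q_1}(\mathcal{M})$ directly: $\mathcal{M}$ is a free polynomial ring on generators $\delta_i(-k)$ for $2\leq i\leq n$ and $k>0$, with each such generator occupying the weight $k$ subspace under the Fock grading, so
$$\mathrm{ch}_{q_1}(\mathcal{M}) = \prod_{i=2}^{n}\prod_{k>0}\frac{1}{1-q_1^k} = \Bigl(\prod_{s>0}\frac{1}{1-q_1^s}\Bigr)^{n-1}.$$
Substituting into the preceding factorization yields the stated character formula.

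The main technical subtlety is the first step: establishing that the right $A^{\mu^{-1}}(\Lambda_0)$-action on the full space $\mathcal{G}$ (and not merely on the highest weight vector ${\bf{v}}\otimes 1\otimes 1$) is implemented by multiplication on the $\mathbb{C}[\tau^{\pm 1}]$ factor. This follows from the cyclicity of $\mathcal{G}$ together with the defining property of the right action on a global Weyl module, which commutes with the left $\mathcal{T}(\mu^{-1})$-action, but some care is required in tracking this bimodule structure.
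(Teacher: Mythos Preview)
Your proposal is correct and is precisely the argument the paper has in mind; the paper simply declares the proposition ``immediate'' after the computation $h_{0,\underline{m}}({\bf v}\otimes 1\otimes 1)={\bf v}\otimes 1\otimes\tau^{\underline m}$ and gives no further details. Your three steps---identifying the right $A^{\mu^{-1}}(\Lambda_0)$-action with multiplication on the $\mathbb{C}[\tau^{\pm1}]$ factor so that $\mathcal{G}_{\underline a}\cong L(\Lambda_0)\otimes\mathcal{M}$, factoring the $q_1$-character via the split action of $d_1$, and reading off the Fock space character of $\mathcal{M}$---are exactly what makes this immediate, and your closing remark about verifying the bimodule compatibility on all of $\mathcal{G}$ (not just the generator) is the one point that genuinely needs a sentence of justification, which you supply.
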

We have the following:
\begin{cor} \label{1cr}
 As a $\mathcal{T}(\mu^{-1})$-module 
 $\mathcal{G}_{\underline{a}} \cong W^{\mu^{-1}}_{\mathrm{loc}}{(\Lambda_0, \underline{a})}$, where $\underline{a} \in {(\mathbb{C}^*)}^{n-1}$. Further, we have
 $$
 \begin{aligned}
\mathrm{ch}_{q_1} (\mathcal{G}_{\underline{a}}) &= \mathrm{ch}_{q_1}
 W^{\mu}_{\mathrm{loc}}{(\Lambda_0, \underline{a})^+} = \mathrm{ch}_{q_1}
 W^{\mu}_{\mathrm{loc}}{(\Lambda_0, \underline{a})},\\
\mathrm{ch}_{q_1, q_2, \ldots, q_n} (\mathcal{G}_{\underline{a}}) &= \mathrm{ch}_{q_1, q_2, \ldots, q_n}
 W^{\mu}_{\mathrm{loc}}{(\Lambda_0, \underline{a})^+} = \mathrm{ch}_{q_1, q_2, \ldots, q_n}
 W^{\mu}_{\mathrm{loc}}{(\Lambda_0, \underline{a})} \\
  &= \mathrm{ch}_{q_1} (L(\Lambda_0)) \left( \prod_{s>0, i = 2}^{n}{\frac{1}{1-q_1^{s} q_i}}\right).
 \end{aligned}$$
 
 \end{cor}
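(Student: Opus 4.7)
The plan is a standard character-squeeze exploiting the surjection $\Phi \colon W_{\mathrm{glob}}^{\mu^{-1}}(\Lambda_0) \twoheadrightarrow \mathcal{G}$ from the preceding theorem, together with Proposition \ref{prop1} and the explicit $q_1$-character of $\mathcal{G}_{\underline a}$ computed in the proposition just before the corollary. The key preliminary observation is that $\Phi$ is $A^{\mu^{-1}}(\Lambda_0)$-equivariant with respect to the right actions. Under the identification $A^{\mu^{-1}}(\Lambda_0) \cong \mathbb{C}[y_2^{\pm 1},\ldots,y_n^{\pm 1}]$ from Theorem \ref{thm1}, the element $y^{\underline m}$ acts on $v_{\Lambda_0}$ on the right as $h_{0,\underline m}$, and the computation displayed immediately before the corollary gives $h_{0,\underline m}\cdot({\bf v}\otimes 1\otimes 1)={\bf v}\otimes 1\otimes \tau^{\underline m}$, which is precisely the right $A^{\mu^{-1}}(\Lambda_0)$-action on $\mathcal{G}$ pulled through $\Phi$. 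Tensoring with $\mathbb{C}_{\underline a}$ over $A^{\mu^{-1}}(\Lambda_0)$ therefore yields a surjective $\mathcal{T}(\mu^{-1})$-module map
$$
\bar\Phi_{\underline a}\colon W_{\mathrm{loc}}^{\mu^{-1}}(\Lambda_0,\underline a) \twoheadrightarrow \mathcal{G}_{\underline a}.
$$

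Next I would run the squeeze in the $q_1$-variable. Surjectivity of $\bar\Phi_{\underline a}$ gives $\mathrm{ch}_{q_1}\mathcal{G}_{\underline a}\le \mathrm{ch}_{q_1}W_{\mathrm{loc}}^{\mu^{-1}}(\Lambda_0,\underline a)$, while Proposition \ref{prop1} provides
$$
\mathrm{ch}_{q_1}W_{\mathrm{loc}}^{\mu^{-1}}(\Lambda_0,\underline a)\le \mathrm{ch}_{q_1}W_{\mathrm{loc}}^{\mu^{-1}}(\Lambda_0,\underline a)^+\le \mathrm{ch}_{q_1}L(\Lambda_0)\prod_{s>0}(1-q_1^s)^{-(n-1)},
$$
and the proposition just before the corollary identifies the left end $\mathrm{ch}_{q_1}\mathcal{G}_{\underline a}$ with the right end. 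Consequently all inequalities are equalities, $\bar\Phi_{\underline a}$ is a $\mathcal{T}(\mu^{-1})$-module isomorphism, and the first character identity of the corollary drops out. The stated equalities for the $\mathcal{T}(\mu)$-modules then follow by transporting along the symmetry that relates $\mu$ and $\mu^{-1}$ (trivial when $r=2$; given by the outer automorphism exchanging $\mu$ and $\mu^2$ when $r=3$).

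For the refined $(q_1,q_2,\ldots,q_n)$-character I would repeat the squeeze, now using Proposition \ref{prop1}'s bound $\mathrm{ch}_{q_1,\ldots,q_n}W_{\mathrm{loc}}^{\mu^{-1}}(\Lambda_0,\underline a)\le \mathrm{ch}_{q_1}L(\Lambda_0)\prod_{s>0,\,i=2}^{n}(1-q_1^sq_i)^{-1}$ and computing $\mathrm{ch}_{q_1,\ldots,q_n}\mathcal{G}_{\underline a}$ directly from the Fock model, reading the $(d_1,\ldots,d_n)$-eigenvalues of states built from modes of the fields $Y({\delta_i}_{(-1)}e^{\underline m\delta},z)$ acting on the vacuum. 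The main bookkeeping obstacle is precisely this last step: after the specialization $\mathcal{G}\otimes_{A^{\mu^{-1}}(\Lambda_0)}\mathbb{C}_{\underline a}$ collapses the lattice variables $\tau_i$ into the scalars $a_i$, one must check that the surviving $(d_1,\ldots,d_n)$-grading on $\mathcal{G}_{\underline a}$ is well defined and assembles to exactly the claimed product. This is the twisted analogue of the computation carried out for toroidal Lie algebras in \cite{KR, SSS}, and it adapts with only notational changes once Lemma \ref{RP1} (which excludes the $A_{2\ell}$ and $E_6$ cases) is in force.
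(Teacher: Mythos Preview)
Your proposal is correct and follows the same character-squeeze strategy the paper intends: the paper's own proof simply reads ``exactly similar to that of the proof of Theorem 6.8 of \cite{SSS},'' and what you have written is precisely that argument spelled out---use the surjection $\Phi$ (and its $A^{\mu^{-1}}(\Lambda_0)$-equivariance) to produce $\bar\Phi_{\underline a}$, then sandwich $\mathrm{ch}_{q_1}\mathcal{G}_{\underline a}\le \mathrm{ch}_{q_1}W^{\mu^{-1}}_{\mathrm{loc}}(\Lambda_0,\underline a)\le \mathrm{ch}_{q_1}W^{\mu^{-1}}_{\mathrm{loc}}(\Lambda_0,\underline a)^+$ against the explicit product from the preceding proposition and Proposition~\ref{prop1}. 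Your handling of the $\mu$ versus $\mu^{-1}$ discrepancy (trivial for $r=2$, via the outer $S_3$-conjugacy for $D_4$ when $r=3$) is also the correct justification for passing from the $\mu^{-1}$-statements to the $\mu$-statements in the displayed character identities.
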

\begin{proof}
The proof is exactly similar to that of the proof of Theorem 6.8 of \cite{SSS}.
\end{proof}
\begin{thm} \label{gthm}
As a $\mathcal{T}(\mu^{-1})$-module $W^{\mu^{-1}}_{\mathrm{glob}}(\Lambda_{0}) \cong \mathcal{G}$.
\end{thm}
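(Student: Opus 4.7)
The strategy is to show that the surjective $\mathcal{T}(\mu^{-1})$-module map $\Phi: W^{\mu^{-1}}_{\mathrm{glob}}(\Lambda_{0}) \to \mathcal{G}$ constructed above is injective, via a fibrewise Nakayama argument carried out one $(\mathfrak{h}_0 \oplus \mathbb{C} d_1)$-weight space at a time. Write $A \coloneqq A^{\mu^{-1}}(\Lambda_0) \cong \mathbb{C}[y_2^{\pm 1}, \ldots, y_n^{\pm 1}]$ (Theorem \ref{thm1}). Since $v_{\Lambda_0} \cdot y^{\underline m} = h_{0,\underline m} \cdot v_{\Lambda_0}$ and, by the computation immediately preceding Corollary \ref{1cr}, $\Phi(h_{0,\underline m} \cdot v_{\Lambda_0}) = \mathbf{v} \otimes 1 \otimes \tau^{\underline m}$, the map $\Phi$ intertwines the right $A$-action, where on $\mathcal{G}$ this action is simply multiplication on the factor $\mathbb{C}[\tau_2^{\pm 1}, \ldots, \tau_n^{\pm 1}]$ in the decomposition $V_{\Gamma_1} \cong \mathcal{M} \otimes \mathbb{C}[\tau_2^{\pm 1}, \ldots, \tau_n^{\pm 1}]$.

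The first key observation is that every weight space $\mathcal{G}_{\Lambda_0 - \beta}$ is a free $A$-module of finite rank. Indeed, in the decomposition $\mathcal{G} = L(\Lambda_0) \otimes \mathcal{M} \otimes \mathbb{C}[\tau_2^{\pm 1}, \ldots, \tau_n^{\pm 1}]$ the $(\mathfrak{h}_0 \oplus \mathbb{C} d_1)$-grading lives entirely on the first two factors and each of its weight components there is finite dimensional, while the $\tau$-variables lie in the kernel of the weight functionals and act by free multiplication. On the other hand, Proposition \ref{prop2} shows that each weight space of $W^{\mu^{-1}}_{\mathrm{glob}}(\Lambda_0)$ is finitely generated over $A$; since $A$ is Noetherian, the kernel $K_\beta$ of the induced surjection $\Phi_\beta : W^{\mu^{-1}}_{\mathrm{glob}}(\Lambda_0)_{\Lambda_0 - \beta} \to \mathcal{G}_{\Lambda_0 - \beta}$ is likewise finitely generated over $A$.

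Finally, for each $\underline a \in (\mathbb{C}^*)^{n-1}$, specialising along the maximal ideal $\mathfrak{m}_{\underline a} \subset A$ turns $\Phi$ into a surjection $W^{\mu^{-1}}_{\mathrm{loc}}(\Lambda_0, \underline a) \to \mathcal{G}_{\underline a}$, and Corollary \ref{1cr} identifies both sides with equal finite dimension in every weight; hence the specialised map is a weight-wise isomorphism and $K_\beta / \mathfrak{m}_{\underline a} K_\beta = 0$ for every $\underline a$. Because $A$ is a finitely generated commutative $\mathbb{C}$-algebra, hence Jacobson, and $K_\beta$ is finitely generated, Nakayama's lemma applied at every maximal ideal forces $K_\beta = 0$. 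Therefore $\Phi$ is injective on every weight space and is thus the desired isomorphism. The main technical point is the clean identification of the right $A$-action on $\mathcal{G}$ with multiplication on the $\tau$-factor, which exhibits the weight spaces of $\mathcal{G}$ as free $A$-modules; once that is in place, everything reduces to a standard Jacobson/Nakayama deduction from the local isomorphism of Corollary \ref{1cr}.
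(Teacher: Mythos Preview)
Your proposal is correct and is precisely the argument the paper has in mind: the paper's proof simply cites Theorem 4.8 of \cite{KR} together with Corollary \ref{1cr} and the Nakayama lemma, and what you have written is exactly that argument spelled out in detail, including the crucial observation that freeness of each $\mathcal{G}_{\Lambda_0-\beta}$ over $A$ makes the tensored sequence left exact so that the local isomorphism forces $K_\beta/\mathfrak{m}_{\underline a}K_\beta=0$. The invocation of the Jacobson property is harmless but not actually needed: once $K_\beta$ is finitely generated over the Noetherian ring $A$ and $(K_\beta)_{\mathfrak m}=0$ at every maximal ideal, one already has $K_\beta=0$.
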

\begin{proof}
The argument is similar to that of the proof of Theorem 4.8 of \cite{KR} which uses Corollary \ref{1cr} and the Nakayama lemma.
\end{proof}
The following theorem readily follows from Corollary \ref{1cr} and Theorem \ref{gthm}. 
\begin{thm}
Let $\mathcal{T}(\mu)$ be twisted toroidal Lie algebra whose underlying finite dimensional simple Lie algebra is of type $A_{2 \ell +1}$, or $D_{\ell}$. Let $\mathcal{T}_{\mathrm{aff}}(\mu)$ be its twisted affine Kac-Moody subalgebra of type $A_{2\ell+1}^{(2)}$, or
$D_{\ell}^{(2)}$ or $D_{4}^{(3)}$. Let $L(\Lambda_0)$ be the basic representation of $\mathcal{T}_{\mathrm{aff}}(\mu)$. Then we have 
$$\mathrm{ch}_{q_1}  W^{\mu}_{\mathrm{loc}}(\Lambda_0, \underline{a}) = 
\mathrm{ch}_{q_1} L(\Lambda_0)\left( \prod_{s>0}{\frac{1}{1-q_1^{s}}}\right)^{n-1},$$
$$\mathrm{ch}_{q_1, q_2, \ldots, q_n}
 W^{\mu}_{\mathrm{loc}}{(\Lambda_0, \underline{a})} = 
 \mathrm{ch}_{q_1} L(\Lambda_0) \left( \prod_{s>0, i = 2}^{n}{\frac{1}{1-q_1^{s} q_i}}\right).$$
\end{thm}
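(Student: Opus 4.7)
The plan is to deduce this theorem directly by combining Theorem~\ref{gthm} with Corollary~\ref{1cr}, after a short bookkeeping step converting $\mu^{-1}$ to $\mu$. First, Theorem~\ref{gthm} gives $W^{\mu^{-1}}_{\mathrm{glob}}(\Lambda_0) \cong \mathcal{G}$ as $\mathcal{T}(\mu^{-1})$-modules. Specializing the right $A^{\mu^{-1}}(\Lambda_0)$-action at a point $\underline{a} \in (\mathbb{C}^*)^{n-1}$, i.e.\ tensoring over $A^{\mu^{-1}}(\Lambda_0)$ with $\mathbb{C}_{\underline{a}}$, yields $W^{\mu^{-1}}_{\mathrm{loc}}(\Lambda_0, \underline{a}) \cong \mathcal{G}_{\underline{a}}$. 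Corollary~\ref{1cr} then evaluates both $\mathrm{ch}_{q_1}(\mathcal{G}_{\underline{a}})$ and $\mathrm{ch}_{q_1, q_2, \ldots, q_n}(\mathcal{G}_{\underline{a}})$ in closed form as $\mathrm{ch}_{q_1} L(\Lambda_0)$ multiplied by the advertised infinite products, so the stated character identities hold for $W^{\mu^{-1}}_{\mathrm{loc}}(\Lambda_0, \underline{a})$.

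To pass from $\mu^{-1}$ back to $\mu$: when $r = 2$, which covers the $A^{(2)}_{2\ell+1}$ and $D^{(2)}_{\ell}$ cases, one has $\mu = \mu^{-1}$ and there is nothing to do. In the $D^{(3)}_4$ case, $\mu$ and $\mu^{-1}$ are the two nontrivial triality automorphisms, which generate the same cyclic subgroup of $\mathrm{Out}(\mathfrak{g})$; swapping the two relevant legs of the $D_4$ diagram gives an automorphism of $\mathfrak{g}$ that conjugates $\mu$ into $\mu^{-1}$, and this extends in the obvious way to an isomorphism $\mathcal{T}(\mu) \cong \mathcal{T}(\mu^{-1})$ carrying $\mathcal{T}_{\mathrm{aff}}(\mu)$ onto $\mathcal{T}_{\mathrm{aff}}(\mu^{-1})$, fixing the fundamental weight $\Lambda_0$, and intertwining basic representations as well as level one local Weyl modules. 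The characters therefore transfer verbatim.

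There is no genuine obstacle in this final statement: all the substantive work is already concentrated in Theorem~\ref{gthm} (whose proof relies on the upper bound from Proposition~\ref{prop1} together with Nakayama's lemma applied to the finitely generated $A^{\mu^{-1}}(\Lambda_0)$-module structure). The only point that merits explicit verification is the $\mu \leftrightarrow \mu^{-1}$ identification in the $D_4^{(3)}$ case sketched above, which is standard and could be invoked by citation. Accordingly the proof is a short composition: apply Theorem~\ref{gthm}, tensor down to the local Weyl module, read off the two character formulas from Corollary~\ref{1cr}, and transport along the $\mu \cong \mu^{-1}$ isomorphism.
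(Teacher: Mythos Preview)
Your proposal is correct and is essentially the paper's own argument: the paper simply states that the theorem ``readily follows from Corollary~\ref{1cr} and Theorem~\ref{gthm}.'' Your explicit handling of the $\mu \leftrightarrow \mu^{-1}$ passage is a reasonable elaboration of a point the paper leaves implicit (note that Corollary~\ref{1cr} already states its character identities for $W^{\mu}_{\mathrm{loc}}$ rather than $W^{\mu^{-1}}_{\mathrm{loc}}$, so the conversion is in effect absorbed there).
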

With the assumption of the hypothesis of above theorem we have the following:
\begin{cor}
 
$$\mathrm{ch}_{q_1}  W^{\mu}_{\mathrm{loc}}(\Lambda_0, \underline{a}) = 
e^{\Lambda_0} \prod_{p = 1}^{\infty}{\left(\frac{1}{1- q_1^{p}} \right) ^{\mathrm{mult} \, p \delta_1}} \left( \prod_{s>0}{\frac{1}{1-q_1^{s}}}\right)^{n-1},$$
$$\mathrm{ch}_{q_1, q_2, \ldots, q_n}
 W^{\mu}_{\mathrm{loc}}{(\Lambda_0, \underline{a})} = 
 e^{\Lambda_0} \prod_{p = 1}^{\infty}{\left(\frac{1}{1- q_1^{p}} \right) ^{\mathrm{mult} \, p \delta_1}} \left( \prod_{s>0, i = 2}^{n}{\frac{1}{1-q_1^{s} q_i}}\right).$$
\end{cor}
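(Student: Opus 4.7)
The plan is to derive this corollary as an immediate consequence of the character formula established in the theorem just above, together with the classical product expression for the $q_1$-character of the basic representation $L(\Lambda_0)$ of a twisted affine Kac-Moody algebra. No new machinery is required; the argument is essentially a one-step substitution.

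First I would invoke the preceding theorem to write
$$\mathrm{ch}_{q_1}\, W^{\mu}_{\mathrm{loc}}(\Lambda_0, \underline{a}) = \mathrm{ch}_{q_1} L(\Lambda_0) \left(\prod_{s>0}\frac{1}{1-q_1^{s}}\right)^{n-1},$$
and the analogous multi-variable version in which the trailing factor is replaced by $\prod_{s>0,\, i=2}^{n}(1-q_1^{s}q_i)^{-1}$. Thus the whole task reduces to identifying $\mathrm{ch}_{q_1} L(\Lambda_0)$ with the explicit product over imaginary-root multiplicities appearing on the right-hand side of the corollary.

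Next I would appeal to the standard identity
$$\mathrm{ch}_{q_1} L(\Lambda_0) = e^{\Lambda_0} \prod_{p=1}^{\infty}\left(\frac{1}{1-q_1^{p}}\right)^{\mathrm{mult}\, p\delta_1},$$
where $\mathrm{mult}\, p\delta_1$ is the multiplicity of $p\delta_1$ as an imaginary root of $\mathcal{T}_{\mathrm{aff}}(\mu)$ (equivalently, $\dim\mathfrak{g}_{p\,\mathrm{mod}\,r}$, as is visible from the grading of $\mathcal{T}_{\mathrm{aff}}(\mu)$ given in Subsection \ref{sec3}). For each of the twisted affine types $A_{2\ell+1}^{(2)}$, $D_\ell^{(2)}$, and $D_4^{(3)}$ in the hypothesis, this identity can be read off from the Weyl--Kac character formula applied to the level-one weight $\Lambda_0$; alternatively it is a direct consequence of the principal realizations of Lepowsky--Wilson and Kac--Kazhdan--Lepowsky--Wilson, under which $L(\Lambda_0)$ is the Fock space of the principal Heisenberg subalgebra, whose graded dimensions are precisely $\mathrm{mult}\, p\delta_1$.

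Substituting this identity into the two character formulas from the preceding theorem yields both stated equalities. There is no real obstacle here: both ingredients are already in place, and the argument is purely mechanical; the only care required is to make sure that the indexing convention for $\mathrm{mult}\, p\delta_1$ matches the root-space decomposition of $\mathcal{T}_{\mathrm{aff}}(\mu)$ so that the exponent is the correct one for each residue class of $p$ modulo $r$.
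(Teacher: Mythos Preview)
Your proposal is correct and follows essentially the same approach as the paper: the paper's proof is the one-line remark ``Immediate from Proposition 12.13 of \cite{KAC},'' which is precisely the product formula $\mathrm{ch}_{q_1} L(\Lambda_0) = e^{\Lambda_0}\prod_{p\geq 1}(1-q_1^{p})^{-\mathrm{mult}\,p\delta_1}$ that you invoke and substitute into the preceding theorem.
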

\begin{proof} Immediate from Proposition 12.13 of \cite{KAC}.
\end{proof}

\bibliographystyle{plain}
\bibliography{reference}

\bigskip
\noindent
riteshp@iitk.ac.in,  sachinsh@iitk.ac.in\\
Department of mathematics and statistics,
IIT kanpur.

\end{document}